\newtheorem{definition}{\bf Definition}[section]
\newtheorem{remark}{\bf Remark}[section]
\newtheorem{lemma}{\bf Lemma}[section]
\newtheorem{theorem}{\bf Theorem}[section]
\newtheorem{corollary}{\bf Corollary}[section]
\newtheorem{proposition}{\bf Proposition}[section]
\newtheorem{example}{\bf Example}[section]
\newtheorem{cor}{\bf Corollary}[section]
\newenvironment{proof}{\bf Proof. \rm}{\hfill $\mbox{\boldmath{$\square$}}$}
\title{A Stone-type duality for semilattices with adjunctions}
\author{B. Gimenez, G. Pelaitay, W. Zuluaga}
\date{}
\begin{document}

\maketitle

\begin{abstract}
\noindent
This paper focuses on semilattices with adjunctions (SLatas), which are semilattices with a greatest element enriched with a pair of adjoint maps. We develop a spectral-style duality for SLatas, building on prior topological dualities for monotone semilattices. As an application of this duality, we characterize SLata congruences through an adaptation of lower-Vietoris-type families. Furthermore, we investigate tense operators on semilattices with a greatest element, deriving a duality for this case by extending the results obtained for SLatas.
\end{abstract}

\font\fivrm=cmr5 \relax

\thispagestyle{empty}

\vspace*{-6mm}

\vspace{2cm}


\section{Introduction}\label{Introduction}

Galois connections arise in the study of relationships between two structured domains, where specific functions enable transformations between them. When these mappings are applied back and forth, stability is achieved. Often, these transformations respect the natural order relations of the underlying structures, simplifying their definition and yielding a wealth of results in this context. In 1944, O. Ore \cite{ore1944} formalized the concept of a Galois connection as a pair of functions that reverse the order between two partially ordered sets, generalizing Birkhoff's \cite{birkhoff} theory of polarities among complete lattices. This concept extended the correspondence between subgroups and subfields in the famous Fundamental Theorem of Galois Theory, giving rise to the term ``Galois connection.'' Subsequently, in 1949, J. Schmidt \cite{schmidt} introduced the concept of order-preserving Galois connections, also known as adjoint pairs, which have been studied in lattice theory as residual maps. The covariant form of Galois connections is particularly advantageous for simplifying the composition of these mappings and has found significant applications in computer science, where preserving relative information is crucial—ensuring that the underlying order relations remain isomorphic when comparing (partially) equivalent structures.

Later, in 1958, Daniel M. Kan applied category theory to homotopy \cite{K1958}, introducing and abstractly developing the concept of adjoint functors with the geometric realization of simplicial sets in mind. It was later realized that, when partially ordered sets are viewed as categories, covariant Galois connections coincide with adjoint functors.

This interrelationship underscores the utility of Galois connections across mathematics, computer science, and logic, while also highlighting their significance in category theory and structural analysis. Additionally, this framework has been applied to various algebraic structures, such as Hilbert algebras, distributive lattices, and Heyting algebras, among other ordered algebraic frameworks (see \cite{Celani,ChajdaPaseka1,ChajdaPaseka,Dzik10,8,9,10}). In \cite{CG}, Celani and González developed a categorical duality between the category of semilattices with homomorphisms and a category of certain multi-relational topological spaces with meet-relations as morphisms. This duality for semilattices naturally extends Stone's duality for lattices via spectral spaces \cite{stone1937}. Building on these results, Calomino, Menchón, and Zuluaga \cite{CMZ} later established a spectral-style multi-relational topological duality for semilattices with monotone operators.

In this paper, we introduce the class of SLatas, which are semilattices with a greatest element enriched with an adjoint pair. Since an SLata is a semilattice equipped with two monotone operators, we leverage these dualities to develop a spectral-style duality specifically for SLatas. This constitutes the primary goal of this paper and is detailed in Section \ref{Semilattices with an adjunction}.

As a secondary goal, we apply the developed duality to study SLata congruences via the so-called SLata-Vietoris families—an adaptation of the monotone lower-Vietoris-type families used in \cite{CMZ}, which in turn are inspired in the ideas of \cite{ID2017}. This is discussed in Section \ref{Congruences and SLata-Vietoris Families}. Lastly, recognizing the potential applications of SLatas in tense logics, Section \ref{Ewald-Semilattices with adjunctions} introduces Ewald-semilattices with adjunctions to study tense operators on semilattices with a greatest element. For this case, we also provide a spectral-style duality.

\section{Preliminaries}\label{Preliminaries}

To make this paper as self-contained as possible, this section provides the necessary definitions and introduces the notation required for the reader to follow the content.

\ 

Let $f : X \to Y$ be a function. For any subset $U \subseteq X$, the direct image of $U$ under $f$ is denoted by $f[U]$. Similarly, for any subset $V \subseteq Y$, the inverse image of $V$ under $f$ is denoted by $f^{-1}[V]$. If $g : Y \to Z$ is another function, the composition of $g$ with $f$ is written as $g \circ f$. Given a binary relation $R \subseteq X \times Y$ and an element $x \in X$, we define $R(x) = \{y \in Y : (x, y) \in R\}$.  
If $T \subseteq Y \times Z$ is another binary relation, their composition $T \circ R$ is defined by  
\[
T \circ R = \{(x, z) : \exists y \in Y \, [(x, y) \in R \text{ and } (y, z) \in T]\}.
\]

Let $\langle X, \leq \rangle$ be a poset. For any subset $Y \subseteq X$, define:  
\[
[Y) = \{x \in X : \exists y \in Y \, (y \leq x)\}, \quad (Y] = \{x \in X : \exists y \in Y \, (x \leq y)\}.
\]  
A subset $Y$ of $X$ is called an \emph{upset} if $Y = [Y)$, and a \emph{downset} if $Y = (Y]$.  
We denote by $\mathcal{P}(X)$ the power set of $X$, and by $\text{Up}(X)$ the collection of all upsets of $X$.  
The complement of a subset $Y \subseteq X$ is denoted by $Y^c$.
\\

A \textit{meet-semilattice} with a greatest element, or simply \textit{semilattice}, is an algebra $\mathbf{A} = \langle A, \wedge, 1\rangle$ of type $(2, 0)$ such that the operation $\wedge$ is idempotent, commutative, associative, and $a \wedge 1 = a$ for all $a \in A$. The partial order $\leq$ on $\mathbf{A}$ is defined by $a \leq b$ if and only if $a = a \wedge b$. For each partially ordered set $(X, \leq)$, the structure $\langle\text{Up}(X), \cap, X\rangle$ is a \textit{semilattice}. 

If $\mathbf{A}$ is a semilattice, a subset $F \subseteq A$ is a \emph{filter} of $\mathbf{A}$ if it is an upset, $1 \in F$ and if $a, b \in F$, then $a \wedge b \in F$. The set of all filters of $\mathbf{A}$ will be denoted by $\text{Fi}(\mathbf{A})$. The \emph{filter generated} by the subset $X \subseteq A$ will be denoted by $\text{F}(X)$. If $X = \{a\}$, then $\text{F}(\{a\}) = [\{a\})$, or simply, $[a)$. We say that a proper filter $F \in \text{Fi}(\mathbf{A})$ is \emph{irreducible} if for all $F_1, F_2 \in \text{Fi}(\mathbf{A})$, if $F = F_1 \cap F_2$, then $F = F_1$ or $F = F_2$. We write $\mathcal{X}(\mathbf{A})$ for the set of all irreducible filters of $\mathbf{A}$.

Let $\mathbf{A}$ and $\mathbf{B}$ be two semilattices. A map $h : A \to B$ is a homomorphism if $h(1) = 1$ and $h(a \wedge b) = h(a) \wedge h(b)$ for all $a, b \in A$. If we consider the poset $(\mathcal{X}(\mathbf{A}), \subseteq)$ and the map $\beta_{\mathbf{A}} : \mathbf{A} \to \text{Up}(\mathcal{X}(\mathbf{A}))$ given by $\beta_{\mathbf{A}}(a) = \{P \in \mathcal{X}(\mathbf{A}) : a \in P\}$, then it is proved that $\mathbf{A}$ is isomorphic to the subalgebra of $(\text{Up}(\mathcal{X}(\mathbf{A})), \cap, \mathcal{X}(\mathbf{A}))$ whose universe is $\beta_{\mathbf{A}}[A] = \{\beta_\mathbf{A}(a) : a \in A\}$. Throughout the paper and to simplify notation, we will omit the subscript of $\beta_\mathbf{A}$ where appropriate.

If $(X, \tau)$ is a topological space and $Y \subseteq X$, we write $\text{cl}(Y)$ for the topological closure of $Y$. In particular, if $Y = \{y\}$, then we simply denote $\text{cl}(\{y\})$ by $\text{cl}(y)$. It is known that every topological space can be endowed with a partial order defined by $x \leq y$ if and only if $x \in \text{cl}(y)$. Such an order is called the \emph{specialization order} of $X$.
\\

Along this paper, by \textit{topological space} we mean a pair $(X, \mathcal{K})$ where $(X, \tau)$ is a topological space and $\mathcal{K}$ is a subbase for $\tau$ (i.e., $\mathcal{K} \subseteq X$ and $X = \bigcup \mathcal{K}$). We consider the following family of subsets of $X$:
\[
S(X) = \{ U^c : U \in \mathcal{K} \}.
\]

Let $C_\mathcal{K}(X)$ be the closure system on $X$ generated by $S(X)$, i.e., $C_\mathcal{K}(X) = \{ A : A \subseteq S(X) \}$.
The elements of $C_{\mathcal{K}(X)}$ are called \textit{subbasic closed subsets} of $X$. If  $C(X)$ is the collection of all closed subsets of the topology, observe that $S(X) \subseteq C_\mathcal{K}(X) \subseteq C(X)$. A subset $Y \subseteq X$ is called \textit{saturated} if it is an intersection of open sets. If $(X, \mathcal{K})$ is a topological space and $Y \subseteq X$, we say that a family $\mathcal{Z} \subseteq S(X)$ is a \textit{$Y$-family} if for every $A, B \in \mathcal{Z}$, there exist $H, C \in S(X)$ such that $Y \subseteq H$, $C \in \mathcal{Z}$, $A \cap H \subseteq C$, and $B \cap H \subseteq C$.
\\

By an $S$-space \cite{CG} we mean a topological space $\langle X, \mathcal{K}\rangle$ in which the following conditions hold:
\begin{itemize}
    \item[(S1)] $\langle X, \mathcal{K}\rangle$ is a $T_0$-space and $X = \bigcup \mathcal{K}$.
    \item[(S2)] $\mathcal{K}$ is a subbase of compact open subsets, it is closed under finite unions and $\emptyset \in \mathcal{K}$.
    \item[(S3)] For every $U, V \in \mathcal{K}$, if $x \in U \cap V$, then there exist $W, D \in \mathcal{K}$ such that $x \notin W$, $x \in D$ and $D \subseteq (U \cap V) \cup W$.
    \item[(S4)] If $Y \in C_{\mathcal{K}}(X)$ and $\mathcal{J} \subseteq S(X)$ is a $Y$-family such that $Y \cap A^c \neq \emptyset$ for all $A \in J$, then $Y \cap \bigcap \{A^c : A \in J\} \neq \emptyset$.
\end{itemize}

Let $\langle X, \mathcal{K}\rangle$ be an $S$-space. Then the structure $\mathbf{S}(X) = \langle S(X), \cap, X\rangle$ is a semilattice, called \emph{the dual semilattice of} $\langle X, \mathcal{K}\rangle$. Conversely, if $\mathbf{A}$ is a semilattice, then $\langle\mathcal{X}(\mathbf{A}), \mathcal{K}_\mathbf{A}\rangle$ is a topological space where $\mathcal{K}_\mathbf{A} = \{\beta(a)^c : a \in A\}$ is a subbase induced by $\beta$. Then it follows that $\beta : A \to S(\mathcal{X}(\mathbf{A}))$ is an isomorphism of semilattices and $\langle\mathcal{X}(\mathbf{A}), \mathcal{K}_\mathbf{A}\rangle$ is an $S$-space, called the \emph{dual $S$-space of $\mathbf{A}$}. If we consider the associated $S$-space $\langle\mathcal{X}(\mathbf{S}(X)), \mathcal{K}_{\mathbf{S}(X)}\rangle$ of the dual semilattice $\mathbf{S}(X)$, then the mapping $H_X : X \to \mathcal{X}(\mathbf{S}(X))$ given by $H_X(x) = \{A \in S(X) : x \in A\}$ for all $x \in X$, is a homeomorphism between $S$-spaces.
\\

Let $\mathsf{Rel}$ be the category of sets and binary relations, and $\mathsf{Set}$ the category of sets and functions. It is well known that there exists a faithful functor $\Box \colon \mathsf{Rel}^{op} \rightarrow \mathsf{Set}$ defined for each set $X$ by $\Box(X) = \mathcal{P}(X)$, and for each binary relation $T \subseteq X \times Y$ and each $U \in \mathcal{P}(Y)$, $\Box_T$ is defined by
\begin{equation*}
\Box_{T}(U) = \{ x \in X \colon T(x) \subseteq U \}.
\end{equation*}

A \emph{meet-relation} between two $S$-spaces $\langle X_{1}, \mathcal{K}_{1} \rangle$ and $\langle X_{2}, \mathcal{K}_{2} \rangle$ was defined in \cite{CG} as a relation $T \subseteq X_{1} \times X_{2}$ satisfying the following conditions:
\begin{enumerate}
\item $\Box_{T}(U) \in S(X_{1})$ for all $U \in S(X_{2})$,
\item $T(x) = \displaystyle \bigcap \{ U \in S(X_{2}) \colon T(x) \subseteq U \}$ for all $x \in X_{1}$.
\end{enumerate}

Furthermore, if $\langle X_{i},\mathcal{K}_{i}\rangle$, with $i=1,2,3$, are $S$-spaces and $R \subseteq X_{1} \times X_{2}$ and $T \subseteq X_{2} \times X_{3}$ are meet-relations, then their composition is defined by the following meet-relation:
\begin{equation}\label{Definicion composition meet-relations}
T \ast R = \{ (x,z) \in X_{1} \times X_{3} \colon \forall U\in S(X_{3}) [(T\circ R)(x)\subseteq U \Rightarrow z\in U ]\}.
\end{equation}

Thus, it turns out that $\ast$ is associative, and if $\sqsupseteq_{2}$ denotes the dual of the specialization order on $X_{2}$ ($x\sqsupseteq_{2} y$ if and only if $y$ belongs to the closure of $x$), then $T \ast \sqsupseteq_{2} = T$ and $\sqsupseteq_{2} \ast R = R$. That is, $S$-spaces and meet-relations form a category $\mathsf{Sspa}$, in which composition is represented by the operator $\ast$, and the identity arrow is determined by the dual of the specialization order. If we write $\mathsf{S}$ be the category of semilattices and homomorphisms. Then we can conclude:

\begin{theorem}[\cite{CG}]\label{Duality S y SSpa}
The categories $\mathsf{S}$ and $\mathsf{Sspa}$ are dually equivalent.
\end{theorem}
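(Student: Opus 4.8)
The plan is to exhibit the dual equivalence by constructing a pair of contravariant functors together with natural isomorphisms witnessing that their two composites are the respective identity functors. The object assignments are already available from the Preliminaries: a semilattice $\mathbf{A}$ is sent to its dual $S$-space $\langle\mathcal{X}(\mathbf{A}),\mathcal{K}_\mathbf{A}\rangle$, and an $S$-space $\langle X,\mathcal{K}\rangle$ to its dual semilattice $\mathbf{S}(X)$. The isomorphism $\beta_\mathbf{A}\colon\mathbf{A}\to S(\mathcal{X}(\mathbf{A}))$ and the homeomorphism $H_X\colon X\to\mathcal{X}(\mathbf{S}(X))$ recalled above already provide the object-level components of the two natural transformations. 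Hence the genuine work lies in (i) defining both functors on morphisms, (ii) checking functoriality, in particular that composition is reversed, and (iii) verifying naturality of $\beta$ and $H$.

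For the functor $\Phi\colon\mathsf{S}\to\mathsf{Sspa}$, I would send a homomorphism $h\colon\mathbf{A}\to\mathbf{B}$ to the relation
\[
R_h=\{(P,Q)\in\mathcal{X}(\mathbf{B})\times\mathcal{X}(\mathbf{A}) : h^{-1}[P]\subseteq Q\}.
\]
First I would check that $R_h$ is a meet-relation. For axiom (1) one computes $\Box_{R_h}(\beta(a))=\beta(h(a))$: indeed $R_h(P)\subseteq\beta(a)$ holds iff $a$ lies in every irreducible filter above $h^{-1}[P]$, which by the separation property of irreducible filters (every filter equals the intersection of the irreducible filters containing it) is equivalent to $a\in h^{-1}[P]$, i.e. $h(a)\in P$; this both verifies (1) and shows $\Box_{R_h}$ carries $S(\mathcal{X}(\mathbf{A}))$ into $S(\mathcal{X}(\mathbf{B}))$. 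Axiom (2), namely $R_h(P)=\bigcap\{U\in S(\mathcal{X}(\mathbf{A})):R_h(P)\subseteq U\}$, is again a direct consequence of the same separation property, since $R_h(P)=\{Q:h^{-1}[P]\subseteq Q\}$. I would then verify that $\Phi(\mathrm{id}_\mathbf{A})$ is the identity arrow $\sqsupseteq$ on $\mathcal{X}(\mathbf{A})$, i.e. that $R_{\mathrm{id}}=\{(P,Q):P\subseteq Q\}$ coincides with the dual specialization order, and that $R_{g\circ h}=R_h\ast R_g$.

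For the functor $\Psi\colon\mathsf{Sspa}\to\mathsf{S}$, I would send a meet-relation $T\subseteq X_1\times X_2$ to the map $\Box_T\colon\mathbf{S}(X_2)\to\mathbf{S}(X_1)$. It takes values in $S(X_1)$ by clause (1) of the definition of a meet-relation, and a short computation gives $\Box_T(X_2)=X_1$ and $\Box_T(U\cap V)=\Box_T(U)\cap\Box_T(V)$, so $\Box_T$ is a homomorphism; moreover $\Box_{\sqsupseteq}(U)=\{x:\mathrm{cl}(x)\subseteq U\}=U$ for closed $U$, so $\sqsupseteq$ is sent to the identity. The step I expect to be the main obstacle is the compatibility of $\Psi$ with composition. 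The functor $\Box\colon\mathsf{Rel}^{op}\to\mathsf{Set}$ only satisfies $\Box_{T\circ R}=\Box_R\circ\Box_T$ for the ordinary relational composite $\circ$, whereas arrows of $\mathsf{Sspa}$ compose by the saturated operation $\ast$ of \eqref{Definicion composition meet-relations}. The crux is therefore to prove $\Box_{T\ast R}(U)=\Box_{T\circ R}(U)$ for every $U\in S(X_3)$; this is exactly where the defining saturation of $\ast$ — that $(x,z)\in T\ast R$ iff $z$ belongs to every $U\in S(X_3)$ with $(T\circ R)(x)\subseteq U$ — is used, together with clause (2) of the meet-relation axioms. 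Once this is settled, the contravariant functoriality of both $\Phi$ and $\Psi$ (including the identity $R_{g\circ h}=R_h\ast R_g$ above) is in hand.

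Finally I would assemble the equivalence. Naturality of $\beta$ reduces to the square $\beta_\mathbf{B}\circ h=\Box_{R_h}\circ\beta_\mathbf{A}$, which is precisely the identity $\Box_{R_h}(\beta(a))=\beta(h(a))$ established when checking that $R_h$ is a meet-relation; naturality of $H$ is the dual statement, verified by transporting a meet-relation $T$ through $H_{X_1}$ and $H_{X_2}$ and invoking the homeomorphism property. Since $\beta_\mathbf{A}$ is a semilattice isomorphism and $H_X$ is a homeomorphism of $S$-spaces for every object, these are natural isomorphisms $\Psi\Phi\cong\mathrm{Id}_\mathsf{S}$ and $\Phi\Psi\cong\mathrm{Id}_\mathsf{Sspa}$, which together yield the asserted dual equivalence.
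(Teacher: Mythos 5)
This theorem is quoted from Celani--Gonz\'alez \cite{CG}; the paper itself gives no proof, only recalling the ingredients (the assignments $\mathbf{A}\mapsto\langle\mathcal{X}(\mathbf{A}),\mathcal{K}_\mathbf{A}\rangle$, $\langle X,\mathcal{K}\rangle\mapsto\mathbf{S}(X)$, the isomorphism $\beta$, the homeomorphism $H_X$, and the composition $\ast$) in the Preliminaries. Your proposal correctly reconstructs the standard argument along exactly those lines --- $h\mapsto R_h$, $T\mapsto\Box_T$, the separation property of irreducible filters giving $\Box_{R_h}(\beta(a))=\beta(h(a))$, and the key identity $\Box_{T\ast R}=\Box_{T\circ R}=\Box_R\circ\Box_T$ --- the only loose end being that the components of the natural isomorphism $\Phi\Psi\cong\mathrm{Id}_{\mathsf{Sspa}}$ must be the meet-relations induced by $H_X$ (as in Proposition \ref{counit duality}), not the functions $H_X$ themselves.
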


Given a semilattice $\mathbf{A}$, a map $m : A \to A$ is said to be \emph{monotone} if it preserves the underlying order of $\mathbf{A}$. Equivalently, $m$ is monotone if for every $a,b\in A$, $m(a\wedge b)\leq m(a)\wedge m(b)$. By a \emph{monotone semilattice} we mean a pair $\langle\mathbf{A}, m\rangle$ where $\mathbf{A}$ is a semilattice and $m$ is a monotone operation on $\mathbf{A}$. It is readily seen that the class of monotone semilattices is a variety. Let $\langle \mathbf{A} ,m\rangle$ and $\langle\mathbf{B}, n\rangle$ be two monotone semilattices.
 We say that a semilattice homomorphism $ h : \mathbf{A} \rightarrow \mathbf{B} $ is a \emph{monotone homomorphism} if $h(m(a)) = n(h(a))$ for all $a \in A$. We write $\mathsf{mMS}$ to denote both the category of monotone semilattices and monotone homomorphisms, as well as the underlying variety. 
\\

A \emph{multirelation on set $X$} is a subset of the cartesian product $X \times \mathcal{P}(X)$, that is, a set of ordered pairs $(x, Y)$ where $x \in X$ and $Y \subseteq X$. If $\langle X , \mathcal{K}\rangle$ is an S-space, $\mathcal{Z}(X)$ is the set of all subsets of $X$
that are intersections (possibly infinite) of members of $\mathcal{K}$.
\\

An $mS$-space is a structure $\langle X, \mathcal{K}, R \rangle$, where $\langle X, \mathcal{K} \rangle$ is an $S$-space and $R \subseteq X \times \mathcal{Z}(X)$ is a multirelation such that:
\begin{enumerate}
    \item $m_R(U) = \{ x \in X : R(x)\subseteq L_{U} \} \in S(X)$ for all $U \in S(X)$,
    \item $R(x) = \bigcap\{ L_U : U \in S(X) \text{ and } x \in m_R(U) \}$ for all $x \in X$.
\end{enumerate}

where, for every $U \in S(X)$, $L_{U} = \{ Z \in \mathcal{Z}(X) \colon Z \cap U \neq \emptyset \}$.
\\

Let $\langle X_1, \mathcal{K}_1, R_1 \rangle$ and $\langle X_2, \mathcal{K}_2, R_2 \rangle$ be two $mS$-spaces. We say that a meet-relation $T \subseteq X_1 \times X_2$ is a \emph{monotone meet-relation} if the following diagram commutes:

\[
\xymatrix{
S(X_2) \ar[r]^{\Box_T} \ar[d]_{m_{R_2}} & S(X_1) \ar[d]^{m_{R_1}} \\
S(X_2) \ar[r]_{\Box_T} & S(X_1)
}
\]
Let $\mathsf{mSp}$ be the category of $mS$-spaces and monotone meet-relations in which the identity map is given by the dual of the specialization order. In \cite{CMZ} it was proved the following:
\begin{theorem}\label{duality Msp mMS}
    The categories $\mathsf{mSp}$ and $\mathsf{mMS}$ are dually equivalent.
\end{theorem}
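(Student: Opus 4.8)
\section*{Proof proposal}

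The plan is to bootstrap the duality $\mathsf{S}\simeq\mathsf{Sspa}^{op}$ of Theorem \ref{Duality S y SSpa} to the enriched setting, by showing that the monotone operator $m$ and the multirelation $R$ are two faces of one datum and that the arrow side is inherited almost verbatim from the base duality. Concretely, I would build functors $\Phi\colon\mathsf{mMS}\to\mathsf{mSp}$ and $\Psi\colon\mathsf{mSp}\to\mathsf{mMS}$ that agree with the base functors on the underlying semilattices/$S$-spaces and on arrows, so that functoriality, preservation of composition, and the identities (the dual of the specialization order) are all inherited from Theorem \ref{Duality S y SSpa}. The only genuinely new content lives at the level of objects and at the single compatibility condition that singles out the monotone arrows.

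For $\Psi$ on objects: given an $mS$-space $\langle X,\mathcal K,R\rangle$, equip the dual semilattice $\mathbf S(X)=\langle S(X),\cap,X\rangle$ with the operation $m_R$. Axiom (1) of an $mS$-space guarantees that $m_R$ is a well-defined self-map of $S(X)$, and monotonicity is automatic: if $U\subseteq V$ then $L_U\subseteq L_V$, so $R(x)\subseteq L_U$ forces $R(x)\subseteq L_V$, whence $m_R(U)\subseteq m_R(V)$. For $\Phi$ on objects: given $\langle\mathbf A,m\rangle$, take the dual $S$-space $\langle\mathcal X(\mathbf A),\mathcal K_{\mathbf A}\rangle$ and define the multirelation $R_m$ canonically so that its induced operator reproduces $m$ through $\beta$. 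The design constraint is $m_{R_m}(\beta(a))=\beta(m(a))$ for all $a$; feeding this into the recovery formula (axiom (2) of an $mS$-space) forces the definition
\[
R_m(P)=\bigcap\{\,L_{\beta(a)} : a\in A,\ m(a)\in P\,\}=\{\,Z\in\mathcal Z(X) : m(a)\in P\Rightarrow Z\cap\beta(a)\neq\emptyset\ \text{for all }a\in A\,\}.
\]
I would then verify that $\langle\mathcal X(\mathbf A),\mathcal K_{\mathbf A},R_m\rangle$ is an $mS$-space, the heart of which is axiom (1), i.e.\ $m_{R_m}(\beta(a))=\beta(m(a))\in S(X)$; monotonicity of $m$ is exactly what is needed to prove that $P\in m_{R_m}(\beta(a))$ iff $m(a)\in P$.

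On arrows both functors are literally the base-duality assignments: a monotone homomorphism $h\colon\langle\mathbf A,m\rangle\to\langle\mathbf B,n\rangle$ goes to its dual meet-relation $T_h$, and a monotone meet-relation $T$ goes to the homomorphism dual to $T$, namely $\Box_T$ on subbasic closed sets. The point to check is that these restrict correctly: the defining commuting square of a monotone meet-relation, $m_{R_1}\circ\Box_T=\Box_T\circ m_{R_2}$, translates under $\beta$ into the identity $n\circ h=h\circ m$, which is precisely the definition of a monotone homomorphism, and conversely (closure of monotone arrows under $\ast$ and $\circ$ then follows by pasting the defining squares). Finally, to obtain the dual equivalence I would upgrade the base natural isomorphisms $\beta$ and $H_X$: the map $\beta\colon\langle\mathbf A,m\rangle\to\Psi\Phi\langle\mathbf A,m\rangle$ is an isomorphism of monotone semilattices because $\beta(m(a))=m_{R_m}(\beta(a))$ holds by construction, and $H_X\colon\langle X,\mathcal K,R\rangle\to\Phi\Psi\langle X,\mathcal K,R\rangle$ is an isomorphism of $mS$-spaces once one shows that $H_X$ intertwines $R$ with the multirelation $R_{m_R}$ rebuilt from $m_R$.

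The main obstacle is this last intertwining, i.e.\ that the round trip $R\mapsto m_R\mapsto R_{m_R}$ recovers $R$ up to the homeomorphism $H_X$. This is exactly where axiom (2) of an $mS$-space does the decisive work: it asserts that $R(x)$ is determined by $m_R$ through the sets $L_U$, so that the reconstructed $R_{m_R}$ can be neither larger nor smaller than the transported $R$. Establishing this equality carefully---controlling the possibly infinite intersections in $\mathcal Z(X)$ and the interplay between $L_U$, $S(X)$ and the saturated sets---is the delicate step; by contrast, well-definedness of $m_R$, monotonicity, and the arrow correspondence reduce cleanly to the base duality and to axiom (1). Once both natural isomorphisms are in place, the dual equivalence follows formally.
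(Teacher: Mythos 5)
Your proposal is correct and takes essentially the same approach as the paper's proof of this statement --- which, note, the paper does not spell out but imports from \cite{CMZ}: there the duality is obtained exactly as you describe, by lifting the base duality of Theorem \ref{Duality S y SSpa} through the assignments $m \mapsto R_m$ and $R \mapsto m_R$ on objects and $h \mapsto R_h$, $T \mapsto \Box_T$ on arrows. In particular, your formula for $R_m$ coincides with the paper's definition of $R_f$ specialized to $f=m$, and the key facts you isolate --- $m_{R_m}(\beta(a))=\beta(m(a))$ (where monotonicity enters via the witness $Z=\beta(a)^c$), recovery of $R$ from $m_R$ through axiom (2), and the translation of the commuting square for monotone meet-relations into $n\circ h = h\circ m$ --- are precisely the Lemmas 7, 10 and 11 of \cite{CMZ} that the paper invokes in its later arguments.
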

If $\langle \mathbf{A},m\rangle$ is a monotone semilattice, its associated $mS$-space is $\langle \mathcal{X}(\mathbf{A}),\mathcal{K}_\mathbf{A},R_m\rangle$. Converseley, if $\langle X,\mathcal{K},R\rangle$ is a $mS$-space, its associated monotone semilattice is $\langle \mathbf{S}(X),m_R\rangle$.

\section{Semilattices with an adjunction}\label{Semilattices with an adjunction}

Let $(P,\leq)$ and $(Q, \leq)$ be two ordered sets and consider two maps $f: P\rightarrow Q$ and $g: Q\rightarrow P$. We say that the pair $(f,g)$ is an \emph{adjoint pair} between $(P,\leq)$ and $(Q, \leq)$ (or simpliy adjoint pair), if for all $p \in P$ and  $q \in Q$, 
\[f(p)\leq q\quad \text{if and only if} \quad p \leq g(q).\]
In such a case, $f$ is called the \emph{left adjoint} and $g$ is called the \emph{right adjoint}, of the adjoint pair $(f,g)$. In what follows we write $f\dashv g$ to denote that $(f,g)$ is an adjoint pair between $(P,\leq)$ and $(Q, \leq)$. If the posets $(P, \leq)$ and $(Q, \leq)$ are identical, then $f \dashv g$ means that the pair $(f, g)$ is an adjoint pair on $P$. It is readily seen that $f$ and $g$ are monotone maps and it is well known (see chapter 7 of \cite{DP2022}) that $f$ preserves all the existing joins of $(P,\leq)$ and $g$ preserves all the existing meets of $(Q,\leq)$. We note that adjoint pairs of posets are also referred to as \emph{Galois connections} in the literature (see \cite{DP2022,6,Dzik10,8,9,10,ore1944} and all the references therein). However, we prefer the term adjoint pair of posets because this concept aligns with the notion of an adjoint pair in category theory when posets are viewed as categories.

\begin{definition}
Let $\mathbf{A}$ be a semilattice and let $i$ and $d$ be unary operators on $A$. A  semilattice with an adjunction (SLata, for short) is an algebra $\langle\mathbf{A},i,d\rangle$ such that $i\dashv d$.
\end{definition}

\begin{remark}\label{condicion adjuncion ecuacional}
If $\mathbf{A}$ is a semilattice then it follows that $i\dashv d$ if and only if \[i(d(a))\leq a \leq d(i(a)),\]
for every $a\in A$. Thus, the class of SLata forms a variety.
\end{remark}

Next, we provide some examples of SLatas:

\begin{example}
    Let $\mathbf{A}$ be a semilattice and consider the identity map $id_A$. Then, $\langle \mathbf{A}, id_{A},id_{A}\rangle$ is trivially an SLata.
\end{example}

\begin{example}
    Let $\mathbf{A}$ be a semilattice with a bottom element $0$. Then, consider $i,d:A\to A$, defined as $i(a)=0$ and $d(a)=1$. It is clear that $\langle \mathbf{A},i,d\rangle$ is an SLata. 
\end{example}

\begin{example}
Let $\mathbf{L}$ be a complete lattice and consider $f:L\to L$ to be an arbitrary monotone function. Now define $i,d:L\to L$ as follows:
\[i(a)=\bigwedge \{x\in L\colon x\leq f(a)\},\]
\[d(a)=\bigvee \{x\in L\colon f(a)\leq x\}.\]
Then, $\langle \mathbf{L},i,f\rangle$ and $\langle \mathbf{L},f,d\rangle$ are SLatas.
\end{example}

\begin{example}
    Let $S$ be a set and consider a function $f:S\to S$. It is well known that, for every $X,Y\subseteq S$, 
    \[f[X]\subseteq Y \Longleftrightarrow X\subseteq f^{-1}[Y].\] 
    Since $\mathbf{P}_{S}=\langle\mathcal{P}(S),\cap, S\rangle$ is a semilattice, then, $\langle \mathbf{P}_{S}, f,f^{-1}\rangle$ is an SLata. 
\end{example}

\begin{example}
   Let  \( X = \{a, b\} \) and consider the semilattice $\mathbf{P}_{X}$. Define $f,g:\mathcal{P}(X)\rightarrow \mathcal{P}(X)$ as follows:
\[
f(S) = S \cup \{b\}, \quad g(T) = T \setminus \{b\}.
\]
It easily follows that \( f \dashv g \), so $\langle \mathbf{P}_X,f,g\rangle$ is an SLata.
\end{example}

\begin{example}
    Let \( P = \mathbb{N} \cup \{\infty\} \), and consider the meet semilattice $\mathbf{A}=\langle P,\wedge,\top\rangle$ where \( a \wedge b = \min(a, b) \), with respect to the usual order, and \(\top = \infty\). Now, let us define $f.g:P\to P$ by:
\[
f(x) = x + 1, \quad \text{and}\quad  g(y) = \max(0, y - 1).
\]

We claim that \( f \dashv g \). Indeed, if \( f(x) \leq y \), then \( x + 1 \leq y \). This implies \( x \leq y - 1 \). Since \( g(y) = \max(0, y - 1) \), we have \( x \leq g(y) \). On the other hand, if \( x \leq g(y) \), then \( x \leq \max(0, y - 1) \). If \( y > 0 \), this implies \( x + 1 \leq y \), i.e., \( f(x) \leq y \). Thus, \( f \dashv g \), as claimed. Therefore, $\langle \mathbf{A},f,g\rangle$ is an SLata.
\end{example}

Given two semilattices \( \mathbf{A} \) and \( \mathbf{B} \) and an order-preserving map \( f : A \to B \), we recall that we can define define \( R_f \subseteq \mathcal{X}(\mathbf{B}) \times \mathcal{Z}(\mathcal{X}(\mathbf{A})) \) by
\[
(P, Z) \in R_f \iff f^{-1}[P] \cap I_A(Z) = \emptyset,
\]
where $I_\mathbf{A}(Z) = \{a \in A : \beta(a) \cap Z = \emptyset\}.$
\\

As a first step in constructing our categorical duality, we proceed to define the multi-relational topological spaces that will be used in subsequent sections:

\begin{definition}\label{Slata Space}
    A structure $\langle X,\mathcal{K},I,D \rangle $  is an SLata-space if the following conditions hold:
    \begin{enumerate}
    \item  $\langle X,\mathcal{K},I\rangle $, $\langle X,\mathcal{K},D\rangle $ are $mS$-spaces. 
    \item For every $U \in S(X)$, if $x\in U$ then for every $Z\in D(x)$  exists $w\in Z$ such that $I(w)\subseteq L_U$. 
    \item For every $U \in S(X)$, if for every $Z \in I(x)$ there exists $y \in Z$ such that  $D(y)\subseteq L_U$, then $x\in U$.      
\end{enumerate}

\end{definition}
\begin{proposition}\label{SLata-Sp to Slata}
A structure $\langle X,\mathcal{K},I,D\rangle $  is an SLata-space if and only if  $\langle \mathbf{S}(X),m_I,m_D\rangle $ is an SLata.
\end{proposition}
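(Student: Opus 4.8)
The plan is to read the proposition as a dictionary that translates the two topological conditions (2) and (3) of Definition \ref{Slata Space} into the single algebraic adjunction $m_I \dashv m_D$, while condition (1) is absorbed by the already-established duality for monotone semilattices. Concretely, I would first invoke Theorem \ref{duality Msp mMS}: condition (1) asserts precisely that $\langle X,\mathcal{K},I\rangle$ and $\langle X,\mathcal{K},D\rangle$ are $mS$-spaces, which is exactly what is needed for $m_I$ and $m_D$ to be (monotone) operations on $\mathbf{S}(X)$. Then, since the order of $\mathbf{S}(X)=\langle S(X),\cap,X\rangle$ is set inclusion, Remark \ref{condicion adjuncion ecuacional} tells us that $m_I \dashv m_D$ is equivalent to the two inclusions $m_I(m_D(U)) \subseteq U \subseteq m_D(m_I(U))$ holding for every $U \in S(X)$. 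The whole argument thus reduces to matching condition (2) with the right-hand inclusion and condition (3) with the left-hand one.

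The core of the proof is a routine unfolding of the operators. The identities I would record at the outset are $w \in m_I(U) \iff I(w) \subseteq L_U$, then $Z \cap m_I(U) \neq \emptyset \iff Z \in L_{m_I(U)}$, and finally $D(x) \subseteq L_{m_I(U)} \iff x \in m_D(m_I(U))$, together with their symmetric counterparts obtained by exchanging the roles of $I$ and $D$. Using these, condition (2) --- if $x \in U$ then every $Z \in D(x)$ meets $m_I(U)$ --- says exactly that $x \in U$ forces $D(x) \subseteq L_{m_I(U)}$, i.e. $x \in m_D(m_I(U))$; hence (2) is equivalent to $U \subseteq m_D(m_I(U))$ for all $U \in S(X)$. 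Dually, condition (3) --- if every $Z \in I(x)$ meets $m_D(U)$ then $x \in U$ --- says that $x \in m_I(m_D(U))$ forces $x \in U$, so (3) is equivalent to $m_I(m_D(U)) \subseteq U$ for all $U \in S(X)$. Combining these two equivalences with Remark \ref{condicion adjuncion ecuacional} yields that (2) and (3) together are equivalent to $m_I \dashv m_D$, and together with the translation of (1) this establishes both implications at once.

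The step I expect to require the most care is the backward direction of the translation of condition (1). From the hypothesis that $\langle \mathbf{S}(X), m_I, m_D\rangle$ is an SLata one reads off immediately that $m_I$ and $m_D$ are unary operations on $S(X)$, i.e. $m_I(U), m_D(U) \in S(X)$, which is the first defining clause of an $mS$-space. The genuinely delicate point is the second clause, namely recovering $I(x) = \bigcap\{L_U : U \in S(X),\ x \in m_I(U)\}$ (and likewise for $D$), so that $I$ and $D$ are fully determined by their operators and $\langle X,\mathcal{K},I\rangle,\langle X,\mathcal{K},D\rangle$ are bona fide $mS$-spaces. I would handle this by treating the $mS$-space axioms on $I$ and $D$ as standing data of the structure $\langle X,\mathcal{K},I,D\rangle$, so that this recoverability clause is available as soon as $m_I,m_D$ are well defined; the forward and backward implications then become perfectly symmetric, each reducing to the elementary translations of (2) and (3) above. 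The remaining verifications --- monotonicity of $m_I,m_D$ and their membership in $S(X)$ --- are precisely what Theorem \ref{duality Msp mMS} already supplies, so they need no separate argument.
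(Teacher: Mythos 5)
Your proposal is correct and takes essentially the same route as the paper: both arguments use Theorem \ref{duality Msp mMS} to dispose of condition (1), invoke Remark \ref{condicion adjuncion ecuacional} to reduce $m_I \dashv m_D$ to the inclusions $m_I(m_D(U)) \subseteq U \subseteq m_D(m_I(U))$, and then match these with conditions (2) and (3) of Definition \ref{Slata Space} by unfolding $m_I$, $m_D$ and $L_U$ exactly as you do. If anything, your biconditional phrasing and your explicit acknowledgment that the $mS$-space axioms on $I$ and $D$ must be treated as standing data in the backward direction are more careful than the paper, which proves the forward inclusions in detail and leaves the converse ``to the reader.''
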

    \begin{proof}  Let us assume that $\langle X,\mathcal{K},I,D\rangle $ is an SLata-space. From Theorem \ref{duality Msp mMS}, both, $\langle \mathbf{S}(X),m_{I}\rangle$ and $\langle \mathbf{S}(X),m_{I}\rangle$ are monotone semilattices. We will proof that $ m_I \dashv m_D$. To do so, we will show that for every $U\in S(X)$, $m_I(m_D(U))\subseteq U\subseteq m_D(m_I(U))$. Let $U \in S(X)$. In order to see that $U\subseteq m_D(m_I(U))$, let   $x\in U$ and let $Z\in D(x)$. By Definition \ref{Slata Space} (2), there exists $w\in T$ such that $I(w) \subseteq L_U$. Thus, $Z\cap m_{D}(U)\neq \emptyset$ and consequently $Z\in  L_{m_{I}(U)} $. I.e. $D(x)\subseteq L_{m_{I}(U)}$. Hence, by definition of $m_{D}$ we get $x\in m_{I}(m_{D}(U))$. Now, in order to show the remaining inclusion, let $x\in m_{I}(m_{D}(U))$. Then, $I(x) \subseteq L_{m_{D}(U)}$. Let $Z\in I(x)$. Then there exists $y\in Z $ such that $D(y) \subseteq L_U$. Therefore, from Definition \ref{Slata Space} (3), $x\in U$. I.e. $m_I(m_D(U))\subseteq U$. On the other hand, let us assume that $\langle \mathbf{S}(X),m_{I},m_{D}\rangle $ is an SLata. Then, for every $U\in S(X)$, $m_I(m_D(U))\subseteq U\subseteq m_D(m_I(U))$. It is not hard to see that the latter inclusions implies conditions (2) and (3) of Definition \ref{Slata Space}. We leave the details to the reader. This concludes the proof.

    \end{proof}
\begin{proposition}\label{SLata to Slata-Sp}
Let $\mathbf{A}$ be a semilattice. An algebra $\langle \mathbf{A},i,d \rangle $ is an SLata if and only if $ \langle \mathcal{X}(\mathbf{A}),\mathcal{K}_\mathbf{A}, R_{i}, R_{d} \rangle$ is an SLata-space.    
\end{proposition}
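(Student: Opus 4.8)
The plan is to exploit the duality machinery already established, reducing the SLata condition to its topological translation. The key structural fact is Proposition \ref{SLata-Sp to Slata}, which tells us precisely when a structure $\langle X,\mathcal{K},I,D\rangle$ yields an SLata on its dual semilattice, namely exactly when it is an SLata-space. The natural strategy is therefore to pass through the dual semilattice of the associated space and invoke the isomorphism $\beta : \mathbf{A} \to \mathbf{S}(\mathcal{X}(\mathbf{A}))$.

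First I would observe that since $\langle \mathbf{A}, i, d\rangle$ has $i$ and $d$ monotone (as $i \dashv d$ forces both to be order-preserving, by the remarks following the definition of adjoint pair), each of $\langle \mathbf{A}, i\rangle$ and $\langle \mathbf{A}, d\rangle$ is a monotone semilattice. By Theorem \ref{duality Msp mMS} and the concluding remark of Section \ref{Preliminaries}, their associated $mS$-spaces are $\langle \mathcal{X}(\mathbf{A}), \mathcal{K}_\mathbf{A}, R_i\rangle$ and $\langle \mathcal{X}(\mathbf{A}), \mathcal{K}_\mathbf{A}, R_d\rangle$ respectively, so condition (1) of Definition \ref{Slata Space} holds automatically. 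This reduces the problem to relating the adjunction $i \dashv d$ on $\mathbf{A}$ to conditions (2) and (3) on the space.

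The crux is to combine Proposition \ref{SLata-Sp to Slata} with the duality-theoretic identification of the operators. Under the isomorphism $\beta$, the operator $m_{R_i}$ on $\mathbf{S}(\mathcal{X}(\mathbf{A}))$ corresponds to $i$ on $\mathbf{A}$, and similarly $m_{R_d}$ corresponds to $d$; this is precisely the content of how the functor in Theorem \ref{duality Msp mMS} transports monotone operators, so that $\beta(i(a)) = m_{R_i}(\beta(a))$ and $\beta(d(a)) = m_{R_d}(\beta(a))$ for all $a \in A$. Consequently, the equationally-expressed adjunction condition from Remark \ref{condicion adjuncion ecuacional}, namely $i(d(a)) \leq a \leq d(i(a))$, translates under the order-isomorphism $\beta$ into $m_{R_i}(m_{R_d}(\beta(a))) \subseteq \beta(a) \subseteq m_{R_d}(m_{R_i}(\beta(a)))$ for every $a$. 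Since $\beta[A] = S(\mathcal{X}(\mathbf{A}))$, this says exactly that $m_{R_i} \dashv m_{R_d}$ on $\mathbf{S}(\mathcal{X}(\mathbf{A}))$, i.e.\ that $\langle \mathbf{S}(\mathcal{X}(\mathbf{A})), m_{R_i}, m_{R_d}\rangle$ is an SLata. Applying Proposition \ref{SLata-Sp to Slata} to the space $\langle \mathcal{X}(\mathbf{A}), \mathcal{K}_\mathbf{A}, R_i, R_d\rangle$ then yields that it is an SLata-space, giving one direction; the converse runs the same equivalence backwards.

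The main obstacle I anticipate is justifying cleanly that $\beta$ intertwines $i$ with $m_{R_i}$ and $d$ with $m_{R_d}$, i.e.\ verifying the naturality of the operator correspondence rather than merely citing it. Concretely one must check that, with $R_i$ defined via $R_f$ as in the paragraph preceding Definition \ref{Slata Space}, the induced operator $m_{R_i}$ satisfies $\beta(i(a)) = m_{R_i}(\beta(a))$; this is where the explicit description $(P,Z) \in R_f \iff f^{-1}[P] \cap I_\mathbf{A}(Z) = \emptyset$ and the definition of $m_R$ must be unwound. Once this identification is secured for both $i$ and $d$, the rest is purely formal, since $\beta$ is an order-isomorphism onto $S(\mathcal{X}(\mathbf{A}))$ and the adjunction is an order-theoretic (equivalently equational) property preserved by such isomorphisms.
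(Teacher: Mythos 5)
Your proposal is correct and follows essentially the same route as the paper: reduce via Proposition \ref{SLata-Sp to Slata} to showing $m_{R_i} \dashv m_{R_d}$ on $\mathbf{S}(\mathcal{X}(\mathbf{A}))$, transport the equational adjunction conditions of Remark \ref{condicion adjuncion ecuacional} along the isomorphism $\beta$, and run the same equivalence backwards for the converse. The identification $\beta(i(a)) = m_{R_i}(\beta(a))$ and $\beta(d(a)) = m_{R_d}(\beta(a))$ that you flag as the remaining obstacle is exactly what the paper settles by citing Lemmas 7 and 10 of \cite{CMZ}, so no new argument is needed there.
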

\begin{proof}
 For the only if part, let  $\langle \mathbf{A},i,d \rangle$ be a SLata. From Theorem \ref{duality Msp mMS}, we have that both $\langle \mathcal{X}(\mathbf{A}),\mathcal{K}_\mathbf{A}, R_{i}\rangle$ and $\langle \mathcal{X}(\mathbf{A}),\mathcal{K}_\mathbf{A}, R_{d}\rangle$ are $mS$-spaces. Now, in light of Proposition \ref{SLata-Sp to Slata}, to prove the claim it is enough to show that $m_{R_{i}} \dashv m_{R_{d}}$. So let $U\in S(\mathcal{X}(\mathbf{A}))$. We will check that $U\subseteq m_{R_{d}}(m_{R_{i}}(U))$. Indeed, since the map $\beta : A \rightarrow S(\mathcal{X}(\mathbf{A}))$ is an isomorphism of semilattices, we can write $U =\beta(a)$, for some $a\in A$. By assumption and Remark \ref{condicion adjuncion ecuacional}, we know that $a\leq d(i(a))$. So, it follows that $\beta(a)\subseteq \beta (d(i(a)))$. Then, from  lemmas 7 and 10 of \cite{CMZ}, we obtain $\beta (d(i(a)))= m_{R_{d}}(m_{R_{i}}(\beta(a)))$. Therefore, $\beta(a)\subseteq m_{R_{d}}(m_{R_{i}}(\beta(a)))$. The proof of the remaining inclusion is similar. For the if part, let us assume that $\langle\mathcal{X}(\mathbf{A}),\mathcal{K}_\mathbf{A},R_{i}, R_{d}\rangle$ is an SLata-space. From Theorem \ref{duality Msp mMS} and Proposition \ref{SLata-Sp to Slata}, we get that $\langle \mathbf{S}(\mathcal{X}(\mathbf{A})),m_{R_{i}},m_{R_{d}}\rangle$ is a SLata and since $\beta : A \rightarrow S(X(A))$ is a semilattice isomorphism, from Remark \ref{condicion adjuncion ecuacional} we get that $m_{R_{i}}(m_{R_{d}}(\beta(a)))\subseteq \beta (a)$, for all $a\in A $. Let $a\in A$, then, again from  lemmas 7 and 10 of \cite{CMZ} we obtain  $\beta(i(d(a)))\subseteq\beta(a)$.
Finally, since $\beta$ is in particular an order isomorphism, then it follows that $i(d(a)))\leq a$. The proof of the remaining inequality is analogous.

\end{proof}
\begin{definition}
    Let $\langle \mathbf{A}_1,i_1,d_1\rangle$ and $\langle \mathbf{A}_2,i_2,d_2\rangle$ be two SLata's. Then a map $h:A_1\rightarrow A_2$ is a SLata morphism if the following conditions hold:
    \begin{itemize}
        \item[1)] $h$ is a morphism of semilattices.
        \item[2)] $h({i}_1(a))={i}_2(h(a))$, for all $a\in A_1$.
        \item[3)] $h({d}_1(a))={d}_2(h(a))$, for all $a\in A_1$.      
    \end{itemize}
\end{definition}

We denote by $\mathsf{SLata}$ both the variety of SLata's and the category whose objects are SLata's and whose arrows are SLata morphisms. The composition of arrows is defined as the usual composition of functions, and the identity arrow is given by the identity map.

\begin{definition}

    Let $\langle X_1,\mathcal{K}_1,I_1,D_1\rangle $ and $\langle X_2,\mathcal{K}_2,I_2,D_2\rangle$ two SLata-spaces. A meet- relation $T\subseteq X_1\times X_2$ is a SLata-relation if  the following diagrams commute:
    \[
\begin{tikzcd}
S(X_2) \arrow[r, "\Box_T"] \arrow[d, "m_{I_2}"'] & S(X_1) \arrow[d, "m_{I_1}"] \\
S(X_2) \arrow[r, "\Box_T"'] & S(X_1)
\end{tikzcd}\hspace{2cm}
\begin{tikzcd}
S(X_2) \arrow[r, "\Box_T"] \arrow[d, "m_{{D}_2}"'] & S(X_1) \arrow[d, "m_{D_1}"] \\
S(X_2) \arrow[r, "\Box_T"'] & S(X_1)
\end{tikzcd}
\]
\end{definition}
\

Let $\langle X_j, \mathcal{K}_{j},I_j,D_j\rangle$ with $j = 1, 2, 3$ be SLata-spaces and $H \subseteq X_1 \times X_2$ and $T \subseteq X_2 \times X_3$ be SLata-relations. It is clear that the composition between $H$ and $T$ defined as $T \circ H := T\ast R$ is well defined and associative. Moreover, it is readily seen that for every SLata-space $\langle X, \mathcal{K}, I, D \rangle$, the dual of the specialization order $\sqsupseteq \subseteq X \times X$ is a SLata meet-relation that acts as the identity arrow with respect to the composition previously defined. Therefore, this data allows us to define a category whose objects are SLata-spaces and whose arrows are SLata-relations. We denote this category by $\mathsf{SLataSp}$.

\begin{proposition}\label{unit duality}
  Let  $\langle\mathbf{A},i,d\rangle$ be an SLata, then $\beta: \mathbf{A} \rightarrow \mathbf{S}(\mathcal{X}(\mathbf{A}))$ is an isomorphism of SLata's.
\end{proposition}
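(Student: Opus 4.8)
The plan is to establish that $\beta$ is an isomorphism of SLatas by leveraging the work already done for the underlying structures. First I would recall that by the construction in Theorem \ref{Duality S y SSpa} (and the remarks following it), the map $\beta : \mathbf{A} \to \mathbf{S}(\mathcal{X}(\mathbf{A}))$ is already a semilattice isomorphism; this is the base case that requires no further argument. Likewise, Theorem \ref{duality Msp mMS} guarantees that $\beta$ intertwines each monotone operator with its dual, namely that $\beta$ is a monotone-semilattice isomorphism for the operator $i$ (with respect to $m_{R_i}$) and separately for the operator $d$ (with respect to $m_{R_d}$). Concretely, this means $\beta(i(a)) = m_{R_i}(\beta(a))$ and $\beta(d(a)) = m_{R_d}(\beta(a))$ for all $a \in A$. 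These two facts together are essentially the content of lemmas 7 and 10 of \cite{CMZ}, which were already invoked in the proof of Proposition \ref{SLata to Slata-Sp}.

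With these ingredients in hand, the argument reduces to assembling them. By Proposition \ref{SLata to Slata-Sp}, since $\langle \mathbf{A}, i, d\rangle$ is an SLata, the structure $\langle \mathcal{X}(\mathbf{A}), \mathcal{K}_\mathbf{A}, R_i, R_d\rangle$ is an SLata-space, and hence by Proposition \ref{SLata-Sp to Slata} the dual structure $\langle \mathbf{S}(\mathcal{X}(\mathbf{A})), m_{R_i}, m_{R_d}\rangle$ is itself an SLata. Thus both source and target of $\beta$ are genuine SLatas, and it remains only to verify that $\beta$ is an SLata morphism that happens to be bijective. Bijectivity is immediate from the fact that $\beta$ is already a semilattice isomorphism. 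The morphism conditions (2) and (3) in the definition of SLata morphism are precisely the operator-commutation identities $\beta(i(a)) = m_{R_i}(\beta(a))$ and $\beta(d(a)) = m_{R_d}(\beta(a))$ noted above, so they hold by Theorem \ref{duality Msp mMS} applied to the two monotone-semilattice reducts.

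Finally, to conclude that $\beta$ is an \emph{isomorphism} of SLatas and not merely a bijective morphism, I would observe that the inverse map $\beta^{-1}$ automatically preserves the operators as well: since $\beta$ is a bijection satisfying $\beta \circ i = m_{R_i} \circ \beta$, applying $\beta^{-1}$ on both sides gives $i \circ \beta^{-1} = \beta^{-1} \circ m_{R_i}$, and symmetrically for $d$. Hence $\beta^{-1}$ is also an SLata morphism, which completes the proof.

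I do not anticipate a genuine obstacle here, as the statement is essentially a packaging of the monotone-semilattice duality applied twice, once for $i$ and once for $d$. The only point requiring minor care is bookkeeping: ensuring that the \emph{same} map $\beta$ simultaneously realizes both monotone isomorphisms, so that the single map $\beta$ carries the full SLata structure. Since $\beta$ depends only on $\mathbf{A}$ and not on the choice of operator, this compatibility is automatic, and the two invocations of Theorem \ref{duality Msp mMS} are consistent with one another.
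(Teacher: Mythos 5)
Your proposal is correct and follows essentially the same route as the paper's proof: both invoke Theorem \ref{duality Msp mMS} twice (once for the reduct $\langle\mathbf{A},i\rangle$ and once for $\langle\mathbf{A},d\rangle$) and then use Lemmas 7 and 10 of \cite{CMZ} to obtain the commutation identities $\beta(i(a)) = m_{R_i}(\beta(a))$ and $\beta(d(a)) = m_{R_d}(\beta(a))$. Your additional bookkeeping (confirming via Propositions \ref{SLata to Slata-Sp} and \ref{SLata-Sp to Slata} that the codomain is an SLata, and that $\beta^{-1}$ also preserves the operators) is sound but implicit in the paper's shorter argument.
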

\begin{proof}
 From Theorem \ref{duality Msp mMS}, we know that $\beta$ establishes an isomorphism of monotone semilattices between $\langle \mathbf{A}, i \rangle$ and $\langle \mathbf{S}(\mathcal{X}(\mathbf{A})), m_I \rangle$, as well as between $\langle \mathbf{A}, d \rangle$ and $\langle \mathbf{S}(\mathcal{X}(\mathbf{A})), m_D \rangle$. It only remains to show that $\beta$ preserves $i$ and $d$. This is indeed the case, since from Lemmas 7 and 10 of \cite{CMZ}, we have that for all $a \in A$, $m_I(\beta(a)) = \beta(i(a))$ and $m_D(\beta(a)) = \beta(d(a))$.
\end{proof}

\

Let $\langle \mathbf{A}_j, i_j, d_j \rangle$, with $j = 1, 2$, be two SLata's, and let $h : \mathbf{A}_1 \to \mathbf{A}_2$ be a semilattice homomorphism. From Lemma 11 of \cite{CMZ}, it follows that $h$ is a monotone homomorphism (with respect to $i_j$ and $d_j$, respectively) if and only if $R_h \subseteq \mathcal{X}(\mathbf{A}_2) \times \mathcal{X}(\mathbf{A}_1)$, defined by $(P, Q) \in R_h \Leftrightarrow h^{-1}[P] \subseteq Q$, is a monotone meet-relation between their respective dual $mS$-spaces. In particular, if $h$ is an SLata morphism, then $R_h$ is a SLata-relation. Furthermore, if $h = \text{id}_A$, we have $R_h = \sqsupseteq_{\mathcal{X}(\mathbf{A})}$, and from Lemma 12 of \cite{CMZ}, we also have that if $h : A \to B$ and $g : B \to C$ are morphisms of SLata, then $R_{gh} = R_h \ast R_g$.

As a result of the latter discussion, we can consider the following assignments:

\[
\begin{array}{rcl}
   \langle\mathbf{A}, i, d\rangle & \mapsto & \langle \mathcal{X}(\mathbf{A}), \mathcal{K}_\mathbf{A}, R_i, R_d \rangle, \\
   h & \mapsto & R_h, 
\end{array}
\]

so we have a functor \(G : \mathsf{SLata}^{op} \to \mathsf{SLataSp}\).

\

We recall that in \cite{CMZ}, it was proved that the map  $H_{X} : X \rightarrow \mathcal{X}(\mathbf{S}(X))$ defined by
$H_{X}(x) = \{U \in S(X) : x \in U\}$
induces an isomorphism of $mS$-spaces. Taking this into account, we obtain:
 
\begin{proposition}\label{counit duality}
    Let $\langle X,K,I,D\rangle $ an SLata-space. Then $T_{X}\subseteq \mathcal{X}(\mathbf{S}(X)) \times X$ defined by 
    \begin{displaymath}
        \begin{array}{ccc}
            (H_{X}(y), x) \in T_X & \Leftrightarrow  & H_{X}(y)\subseteq H_{X}(x)
        \end{array}
    \end{displaymath}
 is an isomorphism of SLata-spaces. 
\end{proposition}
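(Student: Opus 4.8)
The plan is to reduce everything to the monotone duality of \cite{CMZ} (Theorem \ref{duality Msp mMS}), exploiting the fact that the relation $T_X$ is defined purely in terms of the homeomorphism $H_X$ and set inclusion, without any reference to $I$ or $D$. Since $\langle X,\mathcal{K},I,D\rangle$ is an SLata-space, Proposition \ref{SLata-Sp to Slata} gives that $\langle \mathbf{S}(X),m_I,m_D\rangle$ is an SLata, and then Proposition \ref{SLata to Slata-Sp}, applied to this SLata, guarantees that $\langle \mathcal{X}(\mathbf{S}(X)),\mathcal{K}_{\mathbf{S}(X)},R_{m_I},R_{m_D}\rangle$ is an SLata-space; this is the object between which $T_X$ is to be an isomorphism. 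The point to keep in mind is that, as an $mS$-space, $\langle \mathcal{X}(\mathbf{S}(X)),\mathcal{K}_{\mathbf{S}(X)},R_{m_I}\rangle$ is exactly the dual $mS$-space of the monotone semilattice $\langle \mathbf{S}(X),m_I\rangle$, and similarly for $m_D$.

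First I would recall that in \cite{CMZ} the homeomorphism $H_X$ is shown to induce an isomorphism of $mS$-spaces, and that the meet-relation realizing this isomorphism is precisely the relation $T_X$ of the statement: its defining clause $H_X(y)\subseteq H_X(x)$ is nothing but the graph of $H_X$ saturated by the dual specialization order. Applying this to the $mS$-space structure carried by $I$ yields that the very same relation $T_X$ is a monotone meet-relation between $\langle \mathcal{X}(\mathbf{S}(X)),\mathcal{K}_{\mathbf{S}(X)},R_{m_I}\rangle$ and $\langle X,\mathcal{K},I\rangle$, i.e. the left-hand square of the SLata-relation definition commutes; applying it to the structure carried by $D$ yields commutativity of the right-hand square. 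Since being a SLata-relation amounts exactly to being a meet-relation for which both squares commute, and since $T_X$ is already a meet-relation at the underlying $S$-space level (by \cite{CG} and Theorem \ref{Duality S y SSpa}), this shows that $T_X$ is a SLata-relation.

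Next I would produce the inverse. Let $S_X\subseteq X\times\mathcal{X}(\mathbf{S}(X))$ be the meet-relation realizing $H_X^{-1}$, namely the inverse isomorphism provided by \cite{CMZ}. Because $H_X$ is a single bijection compatible with both the $I$- and the $D$-structures, $S_X$ is simultaneously a monotone meet-relation for $I$ and for $D$, hence a SLata-relation, and one has $T_X\ast S_X=\sqsupseteq$ and $S_X\ast T_X=\sqsupseteq$ on the respective spaces. These are exactly the composition identities witnessing that $H_X$ is an isomorphism in $\mathsf{mSp}$; since the composition operator $\ast$ and the identity arrows $\sqsupseteq$ of $\mathsf{SLataSp}$ are inherited verbatim from $\mathsf{mSp}$, the same equalities witness that $T_X$ is an isomorphism in $\mathsf{SLataSp}$.

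The main obstacle is the identification made in the second paragraph: one must verify carefully that the relation $T_X$ displayed in the statement coincides with the monotone meet-relation that \cite{CMZ} attaches to $H_X$, and that it is the \emph{same} relation for each of the two operators, so that commutativity of both squares can be quoted rather than recomputed. Once this bookkeeping is settled the argument is entirely formal, precisely because the definition of $T_X$ does not involve $I$ or $D$ and the categorical data $\ast$ and $\sqsupseteq$ of $\mathsf{SLataSp}$ agree with those of $\mathsf{mSp}$; the only genuinely topological facts, that $T_X$ is a meet-relation and that the two $\Box_{T_X}$ squares commute, are imported wholesale from \cite{CG} and \cite{CMZ}.
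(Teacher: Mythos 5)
Your proof is correct and takes the same route the paper intends: the paper states Proposition \ref{counit duality} with no proof at all, presenting it as an immediate consequence of the fact (recalled from \cite{CMZ}) that $H_X$ induces an isomorphism of $mS$-spaces, applied to each of the two multirelations $I$ and $D$. Your writeup just makes explicit the bookkeeping the paper leaves implicit --- the SLata-space structure on $\mathcal{X}(\mathbf{S}(X))$ via Propositions \ref{SLata-Sp to Slata} and \ref{SLata to Slata-Sp}, the commutativity of both $\Box_{T_X}$ squares, the inverse relation, and the fact that $\ast$ and $\sqsupseteq$ in $\mathsf{SLataSp}$ are inherited from $\mathsf{mSp}$ --- so it is a faithful expansion of the intended argument.
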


Moreover, we are also able to define a functor $F:\mathsf{SLataSp} \rightarrow \mathsf{SLata}^{op} $, where

\begin{displaymath}
\begin{array}{rcl}
    \langle X,\mathcal{K},I,D\rangle  & \mapsto & \langle \mathbf{S}(X),m_{I},m_{D}\rangle \\
     T & \mapsto & \Box_{T}. 
\end{array}
\end{displaymath}
  
From Propositions \ref{unit duality} and \ref{counit duality}, it follows immediately that:

\begin{theorem}\label{duality SLata}
    The categories $\mathsf{SLataSp}$ and $\mathsf{SLata}$ are dually equivalent.
\end{theorem}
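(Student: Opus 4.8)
The plan is to establish the dual equivalence by exhibiting the two functors $F$ and $G$ together with natural isomorphisms $\eta : 1_{\mathsf{SLata}} \Rightarrow F G$ and $\varepsilon : 1_{\mathsf{SLataSp}} \Rightarrow G F$ (up to the $op$ convention), and then invoke the standard criterion that a pair of functors with natural isomorphisms on both composites constitutes a (dual) equivalence. The components of these natural transformations are already handed to us: Proposition \ref{unit duality} gives that $\beta_{\mathbf{A}} : \langle \mathbf{A}, i, d\rangle \to \langle \mathbf{S}(\mathcal{X}(\mathbf{A})), m_{R_i}, m_{R_d}\rangle$ is an SLata isomorphism, and Proposition \ref{counit duality} gives that $T_X \subseteq \mathcal{X}(\mathbf{S}(X)) \times X$ is an isomorphism of SLata-spaces. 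So the bulk of the work is purely formal: check that $F$ and $G$ are genuine functors and that $\beta$ and $T$ are natural.

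First I would confirm functoriality, most of which is recorded in the paragraphs preceding the theorem. For $G$, the object assignment $\langle \mathbf{A}, i, d\rangle \mapsto \langle \mathcal{X}(\mathbf{A}), \mathcal{K}_{\mathbf{A}}, R_i, R_d\rangle$ lands in $\mathsf{SLataSp}$ by Proposition \ref{SLata to Slata-Sp}, and on arrows $h \mapsto R_h$ the assignment sends an SLata morphism to a SLata-relation (noted in the discussion), respects identities via $R_{\mathrm{id}_A} = {\sqsupseteq}_{\mathcal{X}(\mathbf{A})}$, and reverses composition via $R_{gh} = R_h \ast R_g$ (Lemma 12 of \cite{CMZ}); this is exactly what makes $G$ contravariant, hence a covariant functor $\mathsf{SLata}^{op} \to \mathsf{SLataSp}$. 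For $F$, the object assignment $\langle X, \mathcal{K}, I, D\rangle \mapsto \langle \mathbf{S}(X), m_I, m_D\rangle$ lands in $\mathsf{SLata}$ by Proposition \ref{SLata-Sp to Slata}, and $T \mapsto \Box_T$ is functorial by the corresponding facts from the monotone-semilattice duality (Theorem \ref{duality Msp mMS}), since the SLata structure only adds the commuting squares for $m_{I}$ and $m_{D}$, which are preserved by the defining conditions of a SLata-relation.

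Next I would verify naturality. For $\eta$ with components $\beta_{\mathbf{A}}$, I must show that for every SLata morphism $h : \mathbf{A}_1 \to \mathbf{A}_2$ the square relating $\beta_{\mathbf{A}_1}, \beta_{\mathbf{A}_2}$ and $\Box_{R_h} = F(G(h))$ commutes; this reduces to the identity $\Box_{R_h}(\beta_{\mathbf{A}_2}(a)) = \beta_{\mathbf{A}_1}(h(a))$, which is the naturality square already established for the underlying semilattice duality of \cite{CG} and carries over unchanged, since $\eta$ is an isomorphism of the added unary operators by Proposition \ref{unit duality}. Symmetrically, naturality of $\varepsilon$ in $T_X$ follows from the naturality of the homeomorphism $H_X$ in the $S$-space and $mS$-space dualities, together with Proposition \ref{counit duality} upgrading it to a SLata-space isomorphism. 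In both cases the point is that adding the adjoint pair imposes no new naturality conditions beyond those already verified in \cite{CG,CMZ}.

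The main obstacle, to the extent there is one, is bookkeeping rather than mathematics: one must be careful that the composition $T \ast H$ in $\mathsf{SLataSp}$ agrees with the composition inherited from $\mathsf{mSp}$ (it does, since the SLata-relations are precisely meet-relations satisfying two extra commuting squares, and $\ast$ is the same operator), and that the $op$ on the $\mathsf{SLata}$ side is threaded consistently through $R_{gh} = R_h \ast R_g$ so that $G$ really is a functor out of $\mathsf{SLata}^{op}$. Once functoriality and the two naturality squares are in hand, Propositions \ref{unit duality} and \ref{counit duality} supply componentwise isomorphisms, and the dual equivalence follows from the standard characterization of an equivalence by a unit and counit that are natural isomorphisms. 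I would therefore present the proof as: (i) $F, G$ are well-defined functors by Propositions \ref{SLata-Sp to Slata} and \ref{SLata to Slata-Sp} plus the arrow-level facts imported from \cite{CMZ}; (ii) $\beta$ and $T$ are natural isomorphisms by Propositions \ref{unit duality} and \ref{counit duality} together with naturality inherited from the base dualities; (iii) conclude.
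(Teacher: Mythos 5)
Your proposal is correct and follows essentially the same route as the paper: the paper constructs the functors $F$ and $G$ in the discussion preceding the theorem (using Propositions \ref{SLata-Sp to Slata} and \ref{SLata to Slata-Sp}, the fact that $R_h$ is an SLata-relation, $R_{\mathrm{id}_A} = {\sqsupseteq}_{\mathcal{X}(\mathbf{A})}$, and $R_{gh} = R_h \ast R_g$), and then deduces the dual equivalence directly from Propositions \ref{unit duality} and \ref{counit duality}. The only difference is one of presentation: you spell out the naturality checks and the unit/counit criterion that the paper leaves implicit in its ``it follows immediately.''
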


The following result, which states that certain bijective functions between SLata-spaces induce isomorphisms between them, is analogous to Lemma 8 of \cite{CMZ}. Since its proof is similar, we omit it. 

\begin{lemma}
    Let $\langle X_1,\mathcal{K}_{1},I_1,D_1\rangle$ and $\langle X_2,\mathcal{K}_{2},I_2,D_2\rangle$ be two SLata-spaces and let $h:X_1\to X_2$ be a bijective function such that:
    \begin{enumerate}
        \item $ \mathcal{K}_{2} = \{ h[V] : V \in \mathcal{K}_{1} \} $,
        \item For all $x\in X_1$ and $Z\in \mathcal{Z}(X_1)$, $(x,Z)\in I_1\Longleftrightarrow (h(x),h[Z])\in I_2$.
        \item For all $x\in X_1$ and $Z\in \mathcal{Z}(X_1)$, $(x,Z)\in D_1\Longleftrightarrow (h(x),h[Z])\in D_2$.
    \end{enumerate}
    Then, these SLata-spaces are isomorphic, and moreover, such an isomorphism is induced by $f$.
\end{lemma}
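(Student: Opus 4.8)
The plan is to exhibit $h$ itself as the desired isomorphism in the category $\mathsf{SLataSp}$, which means producing a SLata-relation out of the bijection and showing it is invertible with respect to the composition $\ast$. Since the identity arrow in $\mathsf{SLataSp}$ is the dual of the specialization order, I would not expect $h$ to be literally a morphism; rather, the natural candidate is the relation $T_h \subseteq X_1 \times X_2$ defined by $(x,y) \in T_h \iff h(x) \sqsupseteq_2 y$ (equivalently $y \in \mathrm{cl}(h(x))$), together with its companion $T_{h^{-1}} \subseteq X_2 \times X_1$ built from $h^{-1}$. This mimics exactly the construction behind Lemma 8 of \cite{CMZ}, where a bijection satisfying the analogous conditions (1) for $\mathcal{K}$ induces a homeomorphism of the underlying $S$-spaces, and the specialization-order-twisted graph of that homeomorphism is the meet-relation realizing the $S$-space isomorphism.

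The key steps, in order, are as follows. First I would invoke the $S$-space part of Lemma 8 of \cite{CMZ}: condition (1), $\mathcal{K}_2 = \{h[V] : V \in \mathcal{K}_1\}$, together with bijectivity, forces $h$ to be a homeomorphism of the underlying $S$-spaces $\langle X_1, \mathcal{K}_1\rangle$ and $\langle X_2, \mathcal{K}_2\rangle$, so that the induced map $S(h)\colon S(X_2) \to S(X_1)$, $A \mapsto h^{-1}[A]$, is a semilattice isomorphism, and the graph relation is a meet-relation that is invertible under $\ast$. Second, I would use conditions (2) and (3) to check that this meet-relation is in fact a SLata-relation, i.e.\ that both commuting squares in the definition hold. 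Concretely, $S(h) = \Box_{T_h}$, and the content to verify is $\Box_{T_h} \circ m_{I_2} = m_{I_1} \circ \Box_{T_h}$ and $\Box_{T_h} \circ m_{D_2} = m_{D_1} \circ \Box_{T_h}$; unravelling $m_{I_j}(U) = \{x : I_j(x) \subseteq L_U\}$ and using that $h$ transports the multirelation $I_1$ to $I_2$ (and $D_1$ to $D_2$) via $Z \mapsto h[Z]$ exactly as recorded in (2) and (3), these equalities reduce to the set-theoretic identity $h^{-1}[m_{I_2}(U)] = m_{I_1}(h^{-1}[U])$, and similarly for $D$. Third, I would run the same argument for $h^{-1}$ (whose hypotheses hold symmetrically, since (1)--(3) are stated as biconditionals) to produce the inverse SLata-relation, and conclude that the two compose to the respective dual specialization orders, i.e.\ to the identity arrows, so that $h$ induces an isomorphism in $\mathsf{SLataSp}$.

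The main obstacle I anticipate is the bookkeeping in the second step: verifying that conditions (2) and (3) really yield the pixel-for-pixel commutativity of the $m_{I}$ and $m_{D}$ squares. The subtlety is that $m_{R}$ is defined through the auxiliary sets $L_U = \{Z \in \mathcal{Z}(X) : Z \cap U \neq \emptyset\}$, so one must check that the homeomorphism $h$ intertwines the $L$-constructions, namely that $h[Z] \cap U \neq \emptyset \iff Z \cap h^{-1}[U] \neq \emptyset$ for $U \in S(X_2)$ and $Z \in \mathcal{Z}(X_1)$, and that $h$ restricts to a bijection $\mathcal{Z}(X_1) \to \mathcal{Z}(X_2)$ sending saturated intersections of subbasics to saturated intersections of subbasics. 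Once this translation between $L_{U}$ in $X_2$ and $L_{h^{-1}[U]}$ in $X_1$ is established, conditions (2) and (3) feed directly into the equality $R_2(h(x)) = h[R_1(x)]$ at the level of the relations, and the commuting squares follow. Because this is precisely the pattern of Lemma 8 of \cite{CMZ} augmented with the two extra multirelations, I would present the $S$-space backbone by citation and then spell out only the two diagram verifications, exactly as the paragraph preceding the statement promises.
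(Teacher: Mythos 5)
Your proposal is correct and follows exactly the route the paper intends: the paper omits the proof, stating only that it is analogous to Lemma~8 of \cite{CMZ}, and your argument is precisely that lemma's pattern—use condition (1) and bijectivity for the $S$-space homeomorphism backbone, note that $h$ carries $\mathcal{Z}(X_1)$ bijectively onto $\mathcal{Z}(X_2)$ so that conditions (2) and (3) yield $h^{-1}[m_{I_2}(U)] = m_{I_1}(h^{-1}[U])$ and its $D$-analogue, and then invert via $h^{-1}$. Your identification of the $L_U$-intertwining as the only real bookkeeping point is exactly the right place to focus the verification.
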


We conclude this section with a result stating that bijective maps preserving subbasic elements between SLata-spaces and $mS$-spaces induce extensions of isomorphisms of SLata-spaces.

\begin{lemma}\label{Analogo Lema 8}
    Let $\langle X_1,\mathcal{K}_{1},I,D\rangle$ an SLata-space and $\langle X_{2},\mathcal{K}_{2},T\rangle$ an mS-space. If $ h : X_1 \to X_2 $ is a bijective map such that:
    \begin{enumerate}
        \item $ \mathcal{K}_{2} = \{ h[V] : V \in \mathcal{K}_{1} \} $,
        \item $(x,Z)\in I$ if and only if $(h(x),h[Z])\in T$, for all $x\in X_1$ and $Z\in \mathcal{Z}(X_1)$.
    \end{enumerate}
     Then, there exists a multirelation $ T_2 \subseteq X_2 \times \mathcal{Z}(X_2) $ such that $ \langle X_1, \mathcal{K}_1, I,D\rangle$ and $ \langle X_2, \mathcal{K}_2, T, T_{2}\rangle $ are isomorphic SLata-spaces.
\end{lemma}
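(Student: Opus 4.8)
The plan is to define $T_2$ as the image of $D$ under the bijection $h$ and then to show, via Proposition \ref{SLata-Sp to Slata}, that the resulting structure is an SLata-space on which $h$ acts as an isomorphism. First I would record that, since $h$ is a bijection satisfying condition (1), the assignment $\phi(A) = h[A]$ is a semilattice isomorphism $\phi : \mathbf{S}(X_1) \to \mathbf{S}(X_2)$: because $h$ commutes with complements and with arbitrary intersections, one has $S(X_2) = \{h[A] : A \in S(X_1)\}$, and $h[\cdot]$ restricts to a bijection $\mathcal{Z}(X_1) \to \mathcal{Z}(X_2)$. This makes it legitimate to define the multirelation
\[
(h(x), h[Z]) \in T_2 \iff (x, Z) \in D, \qquad x \in X_1,\ Z \in \mathcal{Z}(X_1),
\]
equivalently $(y, W) \in T_2 \iff (h^{-1}(y), h^{-1}[W]) \in D$, so that indeed $T_2 \subseteq X_2 \times \mathcal{Z}(X_2)$.

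Next I would check that $\phi$ intertwines the associated operators. Using hypothesis (2) together with injectivity of $h$ (so that $h[Z] \cap h[A] = h[Z \cap A]$, whence $h[\cdot]$ carries $L_A$ bijectively onto $L_{h[A]}$), one verifies directly that $m_T(h[A]) = h[m_I(A)]$ for every $A \in S(X_1)$; the analogous computation with $T_2$ and $D$ replacing $T$ and $I$ gives $m_{T_2}(h[A]) = h[m_D(A)]$. Since $m_D$ maps $S(X_1)$ into itself and $\phi$ is a bijection onto $S(X_2)$, this already shows that $m_{T_2}$ maps $S(X_2)$ into itself, i.e.\ the first $mS$-space axiom holds for $T_2$; the second $mS$-space axiom transports from the corresponding axiom for $D$ in the same manner, so $\langle X_2, \mathcal{K}_2, T_2\rangle$ is an $mS$-space. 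We thus obtain the two identities $\phi \circ m_I = m_T \circ \phi$ and $\phi \circ m_D = m_{T_2} \circ \phi$, with $\phi$ a semilattice isomorphism.

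With the intertwinings in hand the adjunction transports formally. Since $\langle X_1, \mathcal{K}_1, I, D\rangle$ is an SLata-space, Proposition \ref{SLata-Sp to Slata} gives $m_I \dashv m_D$ on $\mathbf{S}(X_1)$; conjugating by the isomorphism $\phi$ and using the identities above yields $m_T \dashv m_{T_2}$ on $\mathbf{S}(X_2)$. Hence $\langle \mathbf{S}(X_2), m_T, m_{T_2}\rangle$ is an SLata, and a second application of Proposition \ref{SLata-Sp to Slata} shows that $\langle X_2, \mathcal{K}_2, T, T_2\rangle$ is an SLata-space. Finally, $h$ is a bijection satisfying conditions (1), (2), and — by the very definition of $T_2$ — condition (3) of the preceding lemma, now applied to $\langle X_1, \mathcal{K}_1, I, D\rangle$ and the SLata-space $\langle X_2, \mathcal{K}_2, T, T_2\rangle$ just produced; that lemma then guarantees that $h$ induces an isomorphism of SLata-spaces, which is the claim.

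The main obstacle is the middle step: establishing the intertwining $\phi \circ m_D = m_{T_2} \circ \phi$ and, simultaneously, that $T_2$ genuinely satisfies both $mS$-space axioms, since this is precisely what licenses the application of Proposition \ref{SLata-Sp to Slata} to $\langle X_2, \mathcal{K}_2, T, T_2\rangle$. Once this is in place, the transport of the adjunction and the appeal to the preceding lemma are routine.
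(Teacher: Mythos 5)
Your proof is correct, and it takes a genuinely different route for the central verification. You define $T_2$ exactly as the paper does (transporting $D$ along $h$), and you finish exactly as the paper does (invoking the preceding lemma on bijections between SLata-spaces, whose hypotheses (1)--(3) hold by assumption and by the definition of $T_2$). The difference lies in how each argument shows that $\langle X_2,\mathcal{K}_2,T,T_2\rangle$ is an SLata-space. The paper verifies conditions (2) and (3) of Definition \ref{Slata Space} directly at the level of points: it writes $U=h[V^c]$, chases elements $w\in h^{-1}[W]$ with $I(w)\subseteq L_{V^c}$, and pushes them forward through $h$ (relying on Lemma 8 and Corollary 1 of \cite{CMZ} for the $mS$-space facts). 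You instead work at the level of the dual algebras: you establish that $\phi=h[\cdot]$ is a semilattice isomorphism $\mathbf{S}(X_1)\to\mathbf{S}(X_2)$ satisfying the intertwinings $\phi\circ m_I=m_T\circ\phi$ and $\phi\circ m_D=m_{T_2}\circ\phi$, transport the $mS$-space axioms for $T_2$ through $\phi$, conjugate the adjunction $m_I\dashv m_D$ to obtain $m_T\dashv m_{T_2}$, and then invoke the converse direction of Proposition \ref{SLata-Sp to Slata} to conclude the space conditions. This is legitimate --- note that the converse of Proposition \ref{SLata-Sp to Slata} yields conditions (2) and (3) of Definition \ref{Slata Space} only once condition (1) is known, and you do establish both $mS$-space structures before applying it; this is also precisely how the paper itself deploys that proposition in the proof of Proposition \ref{SLata to Slata-Sp}. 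What your route buys is a cleaner, more conceptual argument that avoids the point-set case analysis and reuses the algebra--space correspondence in both directions; what the paper's route buys is a self-contained spatial verification that leans on the cited results of \cite{CMZ} rather than on an explicit isomorphism of dual semilattices, and incidentally avoids any worry about the implicit hypotheses in the converse of Proposition \ref{SLata-Sp to Slata}.
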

    \begin{proof}
      Notice that from the assumptions on $h$ and Lemma 8 of \cite{CMZ}, we obtain that $ \langle X_2, \mathcal{K}_2, T \rangle$ is an $mS$-space isomorphic to $\langle X_1,\mathcal{K}_{1},I\rangle$.
     Now, we define the multirelation $T_{2} \subseteq X_{2} \times  \mathcal{Z}(X_{2})$ as follows:
      \begin{center}
          $(h(x),W) \in T_{2} \Longleftrightarrow (x, h^{-1}[W]) \in D.$
      \end{center}
In order to prove that $\langle X_2,\mathcal{K}_2,T,T_2\rangle$ is a SLata-space, observe that since $\langle X_1,\mathcal{K}_1,D\rangle$ is an $mS$-space, from the definition of $T_2$ and the assumptions on $h$, by applying Corollary 1 of \cite{CMZ} it follows that $\langle X_2,\mathcal{K}_2,T_2\rangle$ is an $mS$-space. Now we proceed to verify that condition (2) of Definition \ref{Slata Space} holds. I.e. we must show that for every $U \in S(X_{2})$, if $h(x)\in U$, then, for every $W \in T_{2}(h(x))$ there exists $u\in W$ such that $T(u)\subseteq L_U$. To do so, let \( U \in S(X_{2}) \) and assume \( h(x) \in U \). From (1), it is immediate that \( U = h[V^{c}] \) for some \( V \in \mathcal{K}_{1} \). Thus, \( x \in V^{c} \), since \( h \) is bijective. Let \( W \in T_{2}(h(x)) \). By the definition of \( T_{2} \), we have that \( h^{-1}[W] \in D(x) \). Since \( \langle X_1, \mathcal{K}_1, I, D \rangle \) is an SLata-space and \( V^c \in \mathcal{K}_1 \), there exists \( w \in h^{-1}[W] \) such that \( I(w) \subseteq L_{V^c} \). Thus, \( h(w) \in W \), and by the bijectivity of \( h \), we have \( h[I(w)] \subseteq h[L_{V^c}] = L_{U} \). Observe that, by (2) and Lemma \ref{Analogo Lema 8}, it is readily seen that \( h[I(w)] = T(h(w)) \). Hence, if we take \( u = h(w) \), it is clear that condition (2) of Definition \ref{Slata Space} holds, as desired. Now, we recall that to prove (3) of Definition \ref{Slata Space}, we need to show that for every $U\in S(X_2)$, if for every $W\in T(h(x))$ there exists $u\in W$ such that $T_2(u)\subseteq L_U$, then $h(x)\in U$. At this stage it is clear that we can set $U=h[V^c]$, for some $V\in \mathcal{K}_1$. So let us consider $W\in T(h(x))$ such that there exists $u\in W$ with $T_2(u)\subseteq L_U$. From (2) and the bijectivity of $h$ we get that $h^{-1}[W]\in I(x)$ and $h^{-1}(u)\in h^{-1}[W]$. So, by definition of $T_2$, it is clear that $T_2(u)=h[D(h^{-1}(u))]$. Therefore, we can conclude that $D(h^{-1}(u))\subseteq L_{V^c}$. Since \( \langle X_1, \mathcal{K}_1, I, D \rangle \) is an SLata-space, then we get that $x\in V^c$, so $h(x)\in U$. Therefore, (2) of Definition \ref{Slata Space} holds, as granted. So we can conclude that \( \langle X_2, \mathcal{K}_2, T, T_2 \rangle \) is an SLata-space.

Finally, from (2), the Definition of $T_2$ and Lemma \ref{Analogo Lema 8} we conclude that $\langle X_1,\mathcal{K}_1,I,D\rangle$ and $\langle X_2,\mathcal{K}_2,T,T_2\rangle$ are isomorphic SLata-spaces. This concludes the proof.



\end{proof}

\section{Congruences and SLata-Vietoris Families}\label{Congruences and SLata-Vietoris Families}

In \cite{CMZ}, the authors characterize congruences of semilattices and monotone semilattices using lower-Vietoris-type families and monotone lower-Vietoris-type families associated with their dual \(mS\)-spaces. The aim of this section is to extend this characterization to the case of SLata's and their corresponding dual SLata-spaces. Taking into account that, in the context of varieties, congruences and surjective homomorphisms are equivalent, we begin by studying surjective homomorphisms in \(\mathsf{SLata}\). This leads us to introduce the notion of an SLata-Vietoris family for an SLata-space, which naturally generalizes the concept of monotone lower-Vietoris-type families for \(mS\)-spaces. Subsequently, we will examine how, given an SLata-Vietoris family, one can construct a one-to-one SLata-relation, thus establishing a connection with congruences.

\subsection{One-to-one SLata-relations}\label{One-to-one SLata-relations} 

We begin by recalling some preliminaries. If \(\langle X_1, \mathcal{K}_1\rangle\) and \(\langle X_2, \mathcal{K}_2\rangle\) are two \(S\)-spaces, a meet-relation \(T \subseteq X_1 \times X_2\) is \emph{one-to-one} if, for each \(x \in X_1\) and \(U \in S(X_1)\) with \(x \notin U\), there exists \(V \in S(X_2)\) such that \(U \subseteq \Box_{T}(V)\) and \(x \notin \Box_{T}(V)\). Similarly, a monotone meet-relation between $mS$-spaces is one-to-one, if it is one-to-one as a meet relation. In \cite{CMZ}, it was shown that one-to-one meet relations and monotone one-to-one meet relations correspond precisely to the extremal monomorphisms in $\mathsf{Sspa}$ and \(\mathsf{mSp}\), respectively, which, in turn, correspond to onto homomorphisms (extremal epimorphisms) in $\mathbf{S}$ and \(\mathsf{mMS}\), repectively.

\begin{definition}\label{one-to-one SLata-relation}
    Let $ \langle X_1, \mathcal{K}_1 , I_{1}, D_{1}\rangle$ and $\langle X_2, \mathcal{K}_2,I_{2},D_{2} \rangle$ be two SLata-spaces. An SLata-relation $T \subseteq X_1 \times X_2$ is
one-to-one if it is one-to-one as a meet-relation.

\end{definition}

The following results are the SLata-space counterparts of Theorem 15 and Corollary 4 from \cite{CMZ}. Since the proofs are analogous, they are omitted.

\begin{theorem}\label{SLata onto}
   Let $ \langle X_1, \mathcal{K}_1 , I_{1}, D_{1}\rangle$ and $\langle X_2, \mathcal{K}_2,I_{2},D_{2} \rangle$ be two SLata-spaces  and $T \subseteq X_1 \times X_2$ be an SLata-relation.
Then the map $ \Box_{T} : S(X_2) \to S(X_1)$ is onto if and only if $T$ is a one-to-one SLata-relation.

\end{theorem}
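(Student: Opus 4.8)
The plan is to prove the biconditional directly in both directions, leaning on the fact established earlier in the excerpt that an SLata-relation is, by Definition \ref{one-to-one SLata-relation}, one-to-one precisely when it is one-to-one as a meet-relation, together with the corresponding characterization for plain meet-relations between $S$-spaces recalled just before the statement. First I would observe that $\Box_T : S(X_2) \to S(X_1)$ is the same map whether we regard $T$ as a meet-relation between the underlying $S$-spaces $\langle X_1, \mathcal{K}_1 \rangle$ and $\langle X_2, \mathcal{K}_2 \rangle$ or as an SLata-relation between the full SLata-spaces; the extra structure (the multirelations $I_j$, $D_j$ and the commuting-diagram conditions) plays no role in the definition of $\Box_T$ itself. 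Since being one-to-one as an SLata-relation is defined to mean being one-to-one as a meet-relation, both sides of the desired biconditional are statements purely about $T$ viewed as a meet-relation between $S$-spaces.

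Given this reduction, the core of the argument is exactly the $S$-space fact recalled in the text: for a meet-relation $T$, the map $\Box_T : S(X_2) \to S(X_1)$ is onto if and only if $T$ is one-to-one as a meet-relation. I would unwind both implications against the explicit definition of one-to-one. For the forward direction, assuming $\Box_T$ is onto, I would take $x \in X_1$ and $U \in S(X_1)$ with $x \notin U$; by surjectivity there is $V \in S(X_2)$ with $\Box_T(V) = U$, which immediately furnishes the required witness satisfying $U \subseteq \Box_T(V)$ and $x \notin \Box_T(V)$, so $T$ is one-to-one. For the converse, assuming $T$ one-to-one, I would show every $U \in S(X_1)$ lies in the image of $\Box_T$; the standard move is to produce, for each $x \notin U$, a set $V_x \in S(X_2)$ separating $x$ from $U$ through $\Box_T$, and then to assemble these using the closure of $S(X)$ under finite unions (condition (S2)) together with the compactness encoded in the $S$-space axioms to recover $U$ as a suitable $\Box_T(V)$.

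Since this is precisely the content of Theorem 15 of \cite{CMZ} for meet-relations between $S$-spaces, and the SLata structure contributes nothing beyond ensuring $T$ is a legitimate arrow in $\mathsf{SLataSp}$, the proof is essentially a verification that the SLata-layer is inert. Concretely, I would state that the equivalence follows by applying the cited $S$-space characterization to $T$ regarded as a meet-relation, and then invoking Definition \ref{one-to-one SLata-relation} to translate ``one-to-one as a meet-relation'' into ``one-to-one SLata-relation.'' This is indeed why the authors can assert the proof is analogous to that of Theorem 15 in \cite{CMZ} and omit it.

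The main obstacle, such as it is, lies in the converse direction's reconstruction of $U$ from the separating witnesses $V_x$: this requires the compactness of the subbasic closed sets and the finite-union closure of $\mathcal{K}$ to pass from a pointwise separation property to a genuine preimage, and it is the only step where the $S$-space axioms do real work. Everything else is definitional bookkeeping, and the key conceptual point to get right is simply that neither direction of the biconditional ever references $I_j$ or $D_j$, so the SLata-enrichment genuinely does not interfere with the surjectivity of $\Box_T$.
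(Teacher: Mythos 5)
Your proposal is correct and takes essentially the same route as the paper, which omits the argument precisely because it defers to Theorem 15 of \cite{CMZ}; your observation that both sides of the biconditional refer only to $T$ as a meet-relation (the multirelations $I_j, D_j$ being inert) is exactly what legitimizes that deferral, and your two directions unwind the definition correctly. One small correction: in the converse direction it is $\mathcal{K}$ that is closed under finite unions, hence $S(X_2)$ is closed under finite \emph{intersections}, and the assembly step uses compactness of $U^c$ plus the fact that $\Box_T$ preserves finite intersections to write $U = \Box_T\bigl(V_{x_1} \cap \dots \cap V_{x_n}\bigr)$.
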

\begin{cor}\label{SLata one-to-one}
    Let $\langle \mathbf{A}_1,i_1,d_1\rangle$ and $\langle \mathbf{A}_2,i_2,d_2\rangle$ be two SLata and let $ h: A \rightarrow B $ be a morphism in $\mathsf{SLata}$. Then $h$ is
onto if and only if the SLata-relation $R_{h}$ is one-to-one.
\end{cor}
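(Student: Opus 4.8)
The plan is to factor the biconditional through the map $\Box_{R_h}$ and reduce to Theorem~\ref{SLata onto}. First I would record that, because $h$ is an $\mathsf{SLata}$ morphism, the induced meet-relation $R_h \subseteq \mathcal{X}(\mathbf{A}_2) \times \mathcal{X}(\mathbf{A}_1)$ is an SLata-relation; this is exactly the observation made just before the definition of the functor $G$, which rests on Lemma~11 of \cite{CMZ}. Hence Theorem~\ref{SLata onto} applies to $R_h$ and tells us that $\Box_{R_h} \colon S(\mathcal{X}(\mathbf{A}_1)) \to S(\mathcal{X}(\mathbf{A}_2))$ is onto if and only if $R_h$ is a one-to-one SLata-relation. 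It therefore suffices to prove the remaining equivalence, namely that $h$ is onto if and only if $\Box_{R_h}$ is onto.

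The key step is the naturality square relating $h$ and $\Box_{R_h}$ through the unit $\beta$, namely
\[
\Box_{R_h} \circ \beta_{\mathbf{A}_1} = \beta_{\mathbf{A}_2} \circ h .
\]
I would verify this identity by a direct computation on a subbasic element. For $a \in A_1$,
\[
\Box_{R_h}(\beta_{\mathbf{A}_1}(a)) = \{ P \in \mathcal{X}(\mathbf{A}_2) : R_h(P) \subseteq \beta_{\mathbf{A}_1}(a) \},
\]
and the condition $R_h(P) \subseteq \beta_{\mathbf{A}_1}(a)$ says that $a$ belongs to every irreducible filter containing the filter $h^{-1}[P]$. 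Since $h^{-1}[P]$ is itself a filter and every filter is the intersection of the irreducible filters above it, this holds precisely when $a \in h^{-1}[P]$, i.e. when $h(a) \in P$. Thus $\Box_{R_h}(\beta_{\mathbf{A}_1}(a)) = \beta_{\mathbf{A}_2}(h(a))$, establishing the square. Equivalently, this is just the naturality of $\beta$ as the unit of the underlying semilattice duality of Theorem~\ref{Duality S y SSpa}.

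With the square in hand, recall from Proposition~\ref{unit duality} that $\beta_{\mathbf{A}_1}$ and $\beta_{\mathbf{A}_2}$ are isomorphisms, hence bijections onto $S(\mathcal{X}(\mathbf{A}_1))$ and $S(\mathcal{X}(\mathbf{A}_2))$ respectively. Rewriting the square as $\Box_{R_h} = \beta_{\mathbf{A}_2} \circ h \circ \beta_{\mathbf{A}_1}^{-1}$ exhibits $\Box_{R_h}$ as a conjugate of $h$ by bijections, so $\Box_{R_h}$ is surjective if and only if $h$ is surjective. Chaining this equivalence with Theorem~\ref{SLata onto} yields the corollary.

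I expect the only genuine content to lie in the naturality identity $\Box_{R_h} \circ \beta_{\mathbf{A}_1} = \beta_{\mathbf{A}_2} \circ h$; everything else is bookkeeping. Conceptually there is no real obstacle, since surjectivity depends only on the semilattice reducts and is insensitive to the operators $i$ and $d$, so the identity is inherited directly from the Celani--González duality. The one SLata-specific point that must not be skipped is confirming that $R_h$ is an SLata-relation, as this is precisely what licenses invoking Theorem~\ref{SLata onto} in the SLata setting (rather than merely the bare-semilattice Theorem~15 of \cite{CMZ}) in the final step.
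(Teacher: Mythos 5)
Your proposal is correct and follows essentially the same route the paper intends: the paper omits the proof as being analogous to Corollary 4 of \cite{CMZ}, whose argument is exactly your reduction, namely combining Theorem~\ref{SLata onto} (applied to the SLata-relation $R_h$) with the naturality identity $\Box_{R_h}\circ\beta_{\mathbf{A}_1}=\beta_{\mathbf{A}_2}\circ h$ and the fact that the $\beta$'s are isomorphisms. Your explicit verification of the naturality square and of $R_h$ being an SLata-relation fills in precisely the details the paper delegates to \cite{CMZ}.
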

    
Let $\langle \mathbf{A},i,d\rangle$ be an SLata, $\langle X, \mathcal{K},I, D \rangle$ be an SLata-space, and $T \subseteq X \times \mathcal{X}(\textbf{A}) $ be a one to one SLata-relation. In what follows, we consider the families $\mathcal{F}_T = \{T(x): x \in X \}\subseteq C_\mathcal{K}(\mathcal{X}(\mathbf{A}))$, and $\mathcal{M}=\{H_a\colon a\in A\}$, where for every $a \in A$, 
\[H_a = \{T(x): T(x) \cap \beta(a)^{c} \not= \emptyset\}.\] 

\begin{remark}\label{Remark util}
    For every $a \in A$ and $x \in X$ the following holds:
\[ T(x) \in H_a \iff x \in (\Box_{T}(\beta(a)))^c. \]
\end{remark}

If $\mathbf{A}$ and $\mathbf{B}$ are two semilattices and $h : \mathbf{A} \to \mathbf{B}$ is an onto homomorphism, then we can define a map $\lambda: \mathcal{X}(\mathbf{B}) \to \mathcal{F}_{{R}_h}$ by  
\[
\lambda(P) = R_h(P).
\] 
By Corollary 3 of \cite{CMZ}, it follows that $\langle \mathcal{F_R}_h, \mathcal{M} \rangle$ is an $S$-space which is homeomorphic to $\langle \mathcal{X}(\mathbf{B}), \mathcal{K}_\mathbf{B} \rangle$.

\begin{theorem}\label{SLata Vietoris iso}
    
Let $ \langle \mathbf{A}_1,i_1,d_{1} \rangle $  and $\langle \mathbf{A}_2, i_{2},d_{2}\rangle $ be two SLata's, and $ h : A_1 \to A_2 $ be an onto homomorphism. Let $ I_{h},D_{h} \subseteq \mathcal{F}_{R_h} \times \mathcal{Z}(\mathcal{F}_{R_h}) $ be the relations defined by:

\begin{displaymath}
\begin{array}{ccc}
   (R_{h}(Q), Z) \in I_h  & \Leftrightarrow &  (Q, \lambda^{-1}[Z]) \in R_{i_2},\\
   \\
   (R_{h}(Q), Z) \in D_h  & \Leftrightarrow & (Q, \lambda^{-1}[Z]) \in R_{d_2}, 
\end{array}     
\end{displaymath}

where $ Q \in \mathcal{X}(\mathbf{B}) $ and $ Z \in \mathcal{Z}(\mathcal{F}_{R_h}) $. Then $\langle \mathcal{F}_{R_h}, \mathcal{M},I_h, D_h \rangle$ is an SLata-space which is isomorphic to $\langle \mathcal{X}(\mathbf{B}), \mathcal{K}_{\mathbf{B}},R_{i_2},R_{d_2} \rangle $ and satisfies the following conditions:

\begin{displaymath}
    \begin{array}{ccc}
         \beta(a)^{c} \in R_{i_1}[{R_{h}}(P)] & \Leftrightarrow & H_a \in I_{h} ({R_{h}}(P)),  \\
         \\
         \beta(a)^{c} \in R_{d_1}[{R_{h}}(P)] & \Leftrightarrow & H_a \in D_{h} ({R_{h}}(P)). 
    \end{array}
\end{displaymath}
for all $ a \in A $ and $ P \in \mathcal{X}(\mathbf{B}) $.

\end{theorem}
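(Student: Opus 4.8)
The plan is to establish Theorem \ref{SLata Vietoris iso} by leveraging the duality machinery developed in the preceding sections rather than by direct verification from scratch. The statement has three parts: first, that $\langle \mathcal{F}_{R_h}, \mathcal{M}, I_h, D_h\rangle$ is an SLata-space; second, that it is isomorphic to the dual SLata-space $\langle \mathcal{X}(\mathbf{B}), \mathcal{K}_{\mathbf{B}}, R_{i_2}, R_{d_2}\rangle$; and third, the two biconditionals relating the relations on $\mathcal{F}_{R_h}$ to the original relations $R_{i_1}, R_{d_1}$ via the families $H_a$. My strategy is to show that the map $\lambda : \mathcal{X}(\mathbf{B}) \to \mathcal{F}_{R_h}$, given by $\lambda(Q) = R_h(Q)$, is precisely the vehicle that transports the SLata-space structure of $\langle \mathcal{X}(\mathbf{B}), \mathcal{K}_{\mathbf{B}}, R_{i_2}, R_{d_2}\rangle$ onto $\mathcal{F}_{R_h}$, and that the relations $I_h, D_h$ are defined exactly so that $\lambda$ becomes a structure-preserving bijection.

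First I would recall, from Corollary 3 of \cite{CMZ} as cited just before the statement, that $\langle \mathcal{F}_{R_h}, \mathcal{M}\rangle$ is an $S$-space and that $\lambda$ induces a homeomorphism with $\langle \mathcal{X}(\mathbf{B}), \mathcal{K}_{\mathbf{B}}\rangle$; in particular $\lambda$ is a bijection and $\mathcal{M} = \{\lambda[\beta(a)^c] : a \in A\}$ in the appropriate sense, matching condition (1) of the isomorphism Lemma (the analogue of Lemma 8). The defining biconditionals for $I_h$ and $D_h$ say precisely that $(\lambda(Q), Z) \in I_h \iff (Q, \lambda^{-1}[Z]) \in R_{i_2}$ and likewise for $D_h$ and $R_{d_2}$. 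These are exactly conditions (2) and (3) of that isomorphism Lemma, transported along the bijection $\lambda$. Hence I would invoke the Lemma directly: since $\langle \mathcal{X}(\mathbf{B}), \mathcal{K}_{\mathbf{B}}, R_{i_2}, R_{d_2}\rangle$ is an SLata-space by Proposition \ref{SLata to Slata-Sp}, and $\lambda$ satisfies the three transport conditions, it follows at once that $\langle \mathcal{F}_{R_h}, \mathcal{M}, I_h, D_h\rangle$ is an SLata-space and that $\lambda$ is an isomorphism of SLata-spaces. This dispatches the first two assertions simultaneously, since one checks the isomorphism conditions and deduces the SLata-space axioms as a consequence.

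It remains to verify the two biconditionals relating $I_h$, $D_h$ to $R_{i_1}$, $R_{d_1}$ via $H_a$. Here the key is Remark \ref{Remark util}, which states $T(x) \in H_a \iff x \in (\Box_T(\beta(a)))^c$, applied with $T = R_h$ and $x = R_h(P)$ suitably interpreted, together with the fact that $\Box_{R_h}$ is the dual of $h$ and that $h$ intertwines $i_1$ with $i_2$ and $d_1$ with $d_2$ (the SLata-morphism conditions). Concretely, I would unfold $H_a \in I_h(R_h(P))$ using the definition of $I_h$ to get $(P, \lambda^{-1}[H_a]) \in R_{i_2}$, then identify $\lambda^{-1}[H_a]$ with the appropriate subset of $\mathcal{X}(\mathbf{A}_2)$ and use the compatibility of $R_{i_2}$ with $R_{i_1}$ under $R_h$ (following from $h \circ i_1 = i_2 \circ h$ and Lemmas 7, 10, 11 of \cite{CMZ}) to rewrite the condition as $\beta(a)^c \in R_{i_1}[R_h(P)]$. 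The argument for $D_h$ and $R_{d_1}$ is entirely parallel.

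The main obstacle I anticipate is the bookkeeping in this last step: correctly computing $\lambda^{-1}[H_a]$ and tracking how the multirelations $R_{i_2}$ on $\mathcal{X}(\mathbf{B})$ and $R_{i_1}$ on $\mathcal{X}(\mathbf{A}_1)$ are linked through the relation $R_h$ and the intertwining identity $h \circ i_1 = i_2 \circ h$. Making the identification $\lambda^{-1}[H_a] \in \mathcal{Z}(\mathcal{X}(\mathbf{B}))$ precise, and ensuring that the membership condition for $R_{i_2}$ translates faithfully into the membership condition for $R_{i_1}$ under direct image by $R_h$, requires careful use of Remark \ref{Remark util} and the naturality built into the definition of $R_{i_j}$ from the preceding section. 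Once this translation is set up cleanly, the two biconditionals follow by symmetric computations, and the theorem is complete.
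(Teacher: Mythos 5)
Your overall strategy---transporting the structure of $\langle \mathcal{X}(\mathbf{B}), \mathcal{K}_{\mathbf{B}}, R_{i_2}, R_{d_2}\rangle$ along the bijection $\lambda$---is the same as the paper's, but the lemma you invoke cannot do the work you assign to it, and this is a genuine gap. The lemma with the three transport conditions (the first, unlabeled lemma of Section \ref{Semilattices with an adjunction}) takes as a \emph{hypothesis} that both structures are already SLata-spaces; its conclusion is only that they are isomorphic. You use it to ``deduce the SLata-space axioms as a consequence'' for $\langle \mathcal{F}_{R_h}, \mathcal{M}, I_h, D_h\rangle$, which is circular: to apply that lemma you would first need to know exactly what you are trying to prove. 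Transport of structure along a bijection does preserve the axioms, but that is a fact requiring proof (verification of the $mS$-space conditions and of conditions (2) and (3) of Definition \ref{Slata Space} on the image side), not the statement of the lemma you cite. If instead you meant Lemma \ref{Analogo Lema 8}, note that it has only two conditions, not three, and that one of its hypotheses---that $\langle \mathcal{F}_{R_h}, \mathcal{M}, I_h\rangle$ is an $mS$-space---is never established in your proposal.

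The paper's proof closes precisely this hole by an asymmetric two-step argument. First, it applies Theorem 16 of \cite{CMZ} to the monotone reducts to conclude that $\langle \mathcal{F}_{R_h}, \mathcal{M}, I_h\rangle$ is an $mS$-space isomorphic to $\langle \mathcal{X}(\mathbf{B}), \mathcal{K}_{\mathbf{B}}, R_{i_2}\rangle$. Second, it invokes Lemma \ref{Analogo Lema 8}, whose hypotheses are exactly: one SLata-space, one $mS$-space, and a bijection satisfying the two transport conditions; its conclusion produces a second multirelation $T_2$ on $\mathcal{F}_{R_h}$ (given by the formula $(\lambda(Q),W)\in T_2 \Leftrightarrow (Q,\lambda^{-1}[W])\in R_{d_2}$, so that $T_2 = D_h$) and yields simultaneously that $\langle \mathcal{F}_{R_h}, \mathcal{M}, I_h, D_h\rangle$ is an SLata-space and that it is isomorphic to $\langle \mathcal{X}(\mathbf{B}), \mathcal{K}_{\mathbf{B}}, R_{i_2}, R_{d_2}\rangle$. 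As for the final two biconditionals, your plan of unfolding the definitions and using the intertwining identities $h(i_1(a)) = i_2(h(a))$ and $h(d_1(a)) = d_2(h(a))$ is a plausible direct computation (and correctly presupposes that $h$ is an SLata morphism, not merely a semilattice homomorphism), but you leave it as a sketch; the paper obtains both equivalences immediately by one further application of Theorem 16 of \cite{CMZ}.
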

\begin{proof}
Notice that from Corollary 3 of \cite{CMZ}, \( \lambda \) is bijective, and by Theorem 16 of \cite{CMZ}, we have \( \mathcal{M} = \{ \lambda(V) : V \in \mathcal{K}_{\mathbf{B}} \} \). Furthermore, since \( \langle \mathcal{X}(\mathbf{B}), \mathcal{K}_{\mathbf{B}}, R_{i_2}, R_{d_2} \rangle \) is an SLata-space, and again by Theorem 16 of \cite{CMZ}, we conclude that \( \langle \mathcal{F}_{R_h}, \mathcal{M}, I_h \rangle \) is an \( mS \)-space that is isomorphic to \( \langle \mathcal{X}(\mathbf{B}), \mathcal{K}_{\mathbf{B}}, R_{i_2} \rangle \). Therefore, from Lemma \ref{Analogo Lema 8}, we obtain that \( \langle \mathcal{F}_{R_h}, \mathcal{M}, I_h, D_h \rangle \) is an SLata-space that is isomorphic to \( \langle \mathcal{X}(\mathbf{B}), \mathcal{K}_{\mathbf{B}}, R_{i_2}, R_{d_2} \rangle \). Finally, once again, from Theorem 16 of \cite{CMZ}, it follows that:
\[
\beta(a)^{c} \in R_{i_1}[ R_h(P) ] \Leftrightarrow H_a \in I_h( R_h(P)),
\]
and
\[
\beta(a)^{c} \in R_{d_1}[ R_h(P) ] \Leftrightarrow H_a \in D_h( R_h(P))
\]
for all \( a \in A \) and \( P \in \mathcal{X}(\mathbf{B}) \). 
\end{proof}

\subsection{SLata-Vietoris families}\label{SLata-Vietoris families}

Let $\langle X,\mathcal{K}\rangle$ be an S-space and \(\mathcal{F}\) a non-empty subset of \(C_\mathcal{K}(X)\). Let us consider \(\mathcal{M}_{\mathcal{F}} = \{U^{-}_{\mathcal{F}} : U \in \mathcal{K}\}\), where for each \(U \subseteq X\),
\[
U^{-}_{\mathcal{F}} = \{Y \in \mathcal{F} : Y \cap U \neq \emptyset\}.
\]
We say that $\mathcal{F}$ is a \emph{lower-Vietoris-type-family of $\langle X,\mathcal{K}\rangle$}, if the pair $\langle \mathcal{F}, \mathcal{M}_{\mathcal{F}}\rangle$ is an S-space, with $\mathcal{M}_{\mathcal{F}} = \{U^{-}_{\mathcal{F}} : U \in \mathcal{K}\}$.

\ 

Let \( \mathbf{A} \) be a semilattice. If \( \mathcal{F} \subseteq C_{\mathcal{K}_{\mathbf{A}}}(\mathcal{X}(\mathbf{A})) \) is a lower-Vietoris-type family of the dual 
\( S \)-space \( \langle\mathcal{X}(\mathbf{A}), \mathcal{K}_{\mathbf{A}}\rangle \), then we can define a relation \( T_{\mathcal{F}} \subseteq \mathcal{F} \times \mathcal{X}(\mathbf{A}) \) by:
\[
(Y, P) \in T_\mathcal{F} \Leftrightarrow P \in Y.
\]
From Lemma 16 of \cite{CMZ}, it follows that $T_\mathcal{F}$ is a one-to-one meet relation, $\mathcal{F}=\mathcal{F}_{T_\mathcal{F}}$ and moreover,

\[\mathcal{M}_{\mathcal{F}}=\{\beta(a)_{\mathcal{F}}^{-}\colon \beta(a)^c\in \mathcal{K}_{\mathbf{A}}\}=\{H_a\colon a\in A\}=\mathcal{M}.\]

It is worth mentioning that, by Theorem \ref{Duality S y SSpa}, it also holds that $\langle S(\mathcal{F}), \cap, \mathcal{F}\rangle$ is a semilattice, and $S(\mathcal{F})=\{H_a^c\colon a\in A\}$.

\

Let \(\langle X, \mathcal{K}, R \rangle\) be an \(mS\)-space and \(\mathcal{F}\) a lower-Vietoris-type family of \(\langle X, \mathcal{K} \rangle\). For any \(H \subseteq \mathcal{Z}(X)\), let us consider the set
\[
[H \cap \mathcal{K})_{\mathcal{F}} = \{V \in \mathcal{K} : \exists U \in H \cap \mathcal{K} \, [U_{\mathcal{F}}^{-} \subseteq V_{\mathcal{F}}^{-}]\}.
\]
A subset \(H \subseteq \mathcal{Z}(X)\) is said to be \emph{\(\mathcal{M}_{\mathcal{F}}\)-increasing} if 
\[
[H \cap \mathcal{K})_{\mathcal{F}} = H \cap \mathcal{K}.
\]
Moreover, a family \(\mathcal{F}\) of non-empty subbasic closed subsets of \(\langle X, \mathcal{K} \rangle\) is a \emph{monotone lower-Vietoris-type family} if \(\langle \mathcal{F}, \mathcal{M}_{\mathcal{F}} \rangle\) is an \(S\)-space and for every \(Y \in \mathcal{F}\) we have that 
\[
R[Y] = \{Z \in \mathcal{Z}(X) : \exists y \in Y \, [(y, Z) \in R]\}
\]
is an \(\mathcal{M}_{\mathcal{F}}\)-increasing subset.

\begin{definition}\label{SLata-Vietoris-type}
    Let \(\langle X, \mathcal{K}, I,D\rangle \) be a SLata-space. A family \( \mathcal{F} \) of non-empty subbasic closed
subsets of \( \langle X, \mathcal{K} \rangle \) is an SLata-lower-Vietoris-type family (SLata-Vietoris family, for short) if the following conditions hold:
\begin{enumerate}
    \item \( \langle \mathcal{F}, \mathcal{M}_{\mathcal{F}} \rangle \) is an \( S \)-space.
    \item For
every \( Y \in \mathcal{F} \), the sets
\[
I[Y] = \{ Z \in \mathcal{Z}(X) : \exists y \in Y \, [(y, Z) \in I] \},
\] and
\[
D[Y] = \{ Z \in \mathcal{Z}(X) : \exists s \in S \, [(s, Z) \in D] \},
\]
are  \( \mathcal{M}_{\mathcal{F}} \)-increasing subsets.
\item \( \langle \mathcal{F}, \mathcal{M}_{\mathcal{F}}, \widehat{I}, \widehat{D} \rangle \) is an \( SLata \)-space, where $\widehat{I}, \widehat{D}\subseteq \mathcal{F}\times \mathcal{Z}(\mathcal{F})$ are defined as follows:
\[
(Y,Z)\in \widehat{I} \quad \Leftrightarrow \quad Z\in \bigcap \{{L_{{U_{\mathcal{F}}^{-}}^{c}}}\colon U\in \mathcal{K}\cap I[Y]^{c}\},
\]
\[
(Y,Z)\in \widehat{D} \quad \Leftrightarrow \quad Z\in \bigcap \{{L_{{U_{\mathcal{F}}^{-}}^{c}}}\colon U\in \mathcal{K}\cap D[Y]^{c}\}.
\]
\end{enumerate}

\end{definition}


\begin{proposition}

    Let \( \langle \mathbf{A}, i,d\rangle \) be an SLata and \( \mathcal{F} \subseteq \mathcal{C_{K_{\mathbf{A}}}}(\mathcal{X}(\mathbf{A})) \) be an SLata-Vietoris- family of \( \langle \mathcal{X}(\mathbf{A}),\mathcal{K}_{\mathbf{A}},R_i,R_d\rangle \). Let us consider the relations \( I_{\mathcal{F}}, D_{\mathcal{F}} \subseteq \mathcal{F} \times \mathcal{Z}(\mathcal{F}) \) defined by

\[
(Y, Z) \in I_{\mathcal{F}} \iff Z \in \bigcap\{L_{H_{a}^{c}} : a \in A \text{ and } \beta(a)^{c} \notin R_i[Y]\}.
\]
\[
(Y, Z) \in D_{\mathcal{F}} \iff Z \in \bigcap\{L_{H_{a}^{c}} : a \in A \text{ and } \beta(a)^{c} \notin R_d[Y]\}.
\]

Then \( \langle \mathcal{F}, \mathcal{M}, I_{\mathcal{F}}, D_{\mathcal{F}} \rangle \) is an SLata-space and \( T_{\mathcal{F}} \subseteq \mathcal{F} \times \mathcal{X}(\mathbf{A}) \) is a one-to-one SLata-relation.
\end{proposition}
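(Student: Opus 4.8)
The plan is to recognise that the relations $I_{\mathcal{F}}$ and $D_{\mathcal{F}}$ displayed in the statement are nothing but the relations $\widehat{I}$ and $\widehat{D}$ attached to the SLata-Vietoris family $\mathcal{F}$ in Definition \ref{SLata-Vietoris-type}, and then to reduce everything to the monotone lower-Vietoris-type machinery of \cite{CMZ}, applied once to $R_i$ and once to $R_d$. The key bookkeeping observation is that, in the present situation $X=\mathcal{X}(\mathbf{A})$ and $\mathcal{K}=\mathcal{K}_{\mathbf{A}}$, every member of $\mathcal{K}_{\mathbf{A}}$ has the form $U=\beta(a)^{c}$, and for such $U$ one computes $U^{-}_{\mathcal{F}}=\{Y\in\mathcal{F}:Y\cap\beta(a)^{c}\neq\emptyset\}=H_{a}$, whence $(U^{-}_{\mathcal{F}})^{c}=H_{a}^{c}$ and the index condition ``$U\in\mathcal{K}_{\mathbf{A}}\cap R_i[Y]^{c}$'' becomes ``$\beta(a)^{c}\notin R_i[Y]$''. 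Reading this off against Definition \ref{SLata-Vietoris-type} gives $I_{\mathcal{F}}=\widehat{I}$ and $D_{\mathcal{F}}=\widehat{D}$, while the discussion preceding the statement already records $\mathcal{M}=\mathcal{M}_{\mathcal{F}}$. Therefore $\langle\mathcal{F},\mathcal{M},I_{\mathcal{F}},D_{\mathcal{F}}\rangle$ is literally $\langle\mathcal{F},\mathcal{M}_{\mathcal{F}},\widehat{I},\widehat{D}\rangle$, which is an SLata-space by condition (3) of Definition \ref{SLata-Vietoris-type}; this settles the first assertion with no further work.

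For the second assertion I would prove separately that $T_{\mathcal{F}}$ is one-to-one and that it is an SLata-relation between $\langle\mathcal{F},\mathcal{M},I_{\mathcal{F}},D_{\mathcal{F}}\rangle$ and the SLata-space $\langle\mathcal{X}(\mathbf{A}),\mathcal{K}_{\mathbf{A}},R_i,R_d\rangle$. Condition (1) of Definition \ref{SLata-Vietoris-type} makes $\langle\mathcal{F},\mathcal{M}_{\mathcal{F}}\rangle$ an $S$-space, so $\mathcal{F}$ is in particular a lower-Vietoris-type family and Lemma 16 of \cite{CMZ} immediately gives that $T_{\mathcal{F}}$ is a one-to-one meet-relation with $\mathcal{F}=\mathcal{F}_{T_{\mathcal{F}}}$. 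By Definition \ref{one-to-one SLata-relation} it then remains only to verify that the two SLata-relation squares commute, i.e. that $\Box_{T_{\mathcal{F}}}\circ m_{R_i}=m_{I_{\mathcal{F}}}\circ\Box_{T_{\mathcal{F}}}$ and $\Box_{T_{\mathcal{F}}}\circ m_{R_d}=m_{D_{\mathcal{F}}}\circ\Box_{T_{\mathcal{F}}}$.

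This is exactly where the monotone theory is invoked. Condition (2) of Definition \ref{SLata-Vietoris-type} states that $R_i[Y]$ and $R_d[Y]$ are $\mathcal{M}_{\mathcal{F}}$-increasing for every $Y\in\mathcal{F}$; together with condition (1) this is precisely the assertion that $\mathcal{F}$ is a monotone lower-Vietoris-type family of the $mS$-space $\langle\mathcal{X}(\mathbf{A}),\mathcal{K}_{\mathbf{A}},R_i\rangle$, and likewise of $\langle\mathcal{X}(\mathbf{A}),\mathcal{K}_{\mathbf{A}},R_d\rangle$. Since the monotone structure these families induce on $\mathcal{F}$ is exactly $\widehat{I}=I_{\mathcal{F}}$ (resp.\ $\widehat{D}=D_{\mathcal{F}}$), the monotone counterpart of this proposition in \cite{CMZ} yields that $T_{\mathcal{F}}$ is a one-to-one monotone meet-relation between $\langle\mathcal{F},\mathcal{M},I_{\mathcal{F}}\rangle$ and $\langle\mathcal{X}(\mathbf{A}),\mathcal{K}_{\mathbf{A}},R_i\rangle$, and between $\langle\mathcal{F},\mathcal{M},D_{\mathcal{F}}\rangle$ and $\langle\mathcal{X}(\mathbf{A}),\mathcal{K}_{\mathbf{A}},R_d\rangle$. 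Unwinding the definition of a monotone meet-relation, the first statement is the commutativity of the first square and the second statement is the commutativity of the second square. Hence $T_{\mathcal{F}}$ is a meet-relation fitting both SLata-relation diagrams and, being one-to-one, is a one-to-one SLata-relation.

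The step I expect to require the most care is the first paragraph: pinning down the identities $I_{\mathcal{F}}=\widehat{I}$ and $D_{\mathcal{F}}=\widehat{D}$ rigorously, since only once these are in place do conditions (2) and (3) of Definition \ref{SLata-Vietoris-type} translate verbatim into the monotone hypotheses on $R_i$ and $R_d$ and into the induced monotone structures on $\mathcal{F}$. Everything after that is a formal gluing of the two monotone statements of \cite{CMZ} with the one-to-one meet-relation fact of Lemma 16; no genuinely new computation is needed. Should the free-standing monotone statement not be available in the exact form required, the fallback is to verify the two squares directly by evaluating both composites on a generating element $\beta(a)\in S(\mathcal{X}(\mathbf{A}))$, using $\Box_{T_{\mathcal{F}}}(\beta(a))=H_{a}^{c}$ and $m_{R_i}(\beta(a))=\beta(i(a))$ together with the definition of $m_{I_{\mathcal{F}}}$.
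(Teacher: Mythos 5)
Your proposal is correct and follows essentially the same route as the paper: the paper's own proof is a one-line appeal to Lemma 17 of \cite{CMZ}, Definition \ref{SLata-Vietoris-type}, and the identities $\widehat{I}=I_{\mathcal{F}}$, $\widehat{D}=D_{\mathcal{F}}$, which is exactly what you establish (the bookkeeping $U^{-}_{\mathcal{F}}=H_a$ for $U=\beta(a)^c$, condition (3) for the SLata-space claim, and the monotone lower-Vietoris machinery plus Lemma 16 for the one-to-one SLata-relation claim). Your write-up simply makes explicit the details the paper leaves implicit.
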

\begin{proof}
The result follows immediately from Lemma 17 of \cite{CMZ}, Definition \ref{SLata-Vietoris-type}, and the fact that $\widehat{I}=I_{\mathcal{F}}$ and $\widehat{D}=D_{\mathcal{F}}$.
\end{proof}

\begin{proposition}\label{H preserva el orden}
    Let $\langle\mathbf{A},i,d\rangle$ be a SLata, $\langle X,\mathcal{K}\rangle$ be a S-space and let $T\subseteq X\times \mathcal{X}(\mathbf{A})$ be a one to one SLata-relation. If $a\leq b$, then $H_b\subseteq H_a$.
\end{proposition}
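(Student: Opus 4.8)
The plan is to reduce the statement to the order-preservation of $\beta$ together with the elementary fact that complementation reverses inclusions. First I would record that $\beta$ is order-preserving: this is immediate because $\beta$ is part of a semilattice isomorphism, but it can also be seen directly. Indeed, if $a \leq b$ and $P \in \beta(a)$, then $a \in P$, and since $P$ is a filter (hence an upset) and $a \leq b$, we get $b \in P$, so $P \in \beta(b)$. Thus $\beta(a) \subseteq \beta(b)$, and taking complements yields the reversed inclusion $\beta(b)^{c} \subseteq \beta(a)^{c}$.

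Next I would argue directly from the definition of the families $H_c$. Let $T(x) \in H_b$, which by definition means $T(x) \cap \beta(b)^{c} \neq \emptyset$. Since $\beta(b)^{c} \subseteq \beta(a)^{c}$, we have $\emptyset \neq T(x) \cap \beta(b)^{c} \subseteq T(x) \cap \beta(a)^{c}$, so $T(x) \cap \beta(a)^{c} \neq \emptyset$, i.e.\ $T(x) \in H_a$. As every element of $H_b$ is of this form, this establishes $H_b \subseteq H_a$, as claimed. Alternatively, one can route the same computation through Remark \ref{Remark util}: because $\Box_{T}$ is monotone, $\beta(a) \subseteq \beta(b)$ gives $\Box_{T}(\beta(a)) \subseteq \Box_{T}(\beta(b))$, hence $(\Box_{T}(\beta(b)))^{c} \subseteq (\Box_{T}(\beta(a)))^{c}$, and the equivalence $T(x) \in H_c \iff x \in (\Box_{T}(\beta(c)))^{c}$ transfers this inclusion verbatim to $H_b \subseteq H_a$.

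There is no genuine obstacle here; the only point requiring care is the bookkeeping of inclusion directions, which is precisely what forces the \emph{reversal} in the conclusion ($a \leq b$ yields $H_b \subseteq H_a$, not the reverse). I would also note, as a sanity check on the argument, that neither the adjunction data ($i$, $d$, $I$, $D$) nor the one-to-one hypothesis on $T$ is actually used for this particular inclusion: the statement depends only on the order of $\mathbf{A}$, the monotonicity of $\beta$, and the definition of the $H_c$'s. Those extra hypotheses are inherited from the ambient setting in which the families $H_a$ are deployed, so I would not invoke them in the proof itself.
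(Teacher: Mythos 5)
Your proof is correct and takes essentially the same approach as the paper: the paper's own argument derives $\beta(a)\subseteq\beta(b)$ from $a\leq b$, uses Remark \ref{Remark util} to translate $T(x)\in H_b$ into $T(x)\not\subseteq\beta(b)$, and reverses the inclusion, which is exactly your alternative route via $\Box_T$, while your primary argument is the same reasoning unpacked directly at the level of intersections with complements. Your closing observation is also accurate: the paper's proof likewise never uses the one-to-one hypothesis or the adjunction data $i,d,I,D$.
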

\begin{proof}
    Let $a, b \in A$ with $a\leq b$, then $\beta(a)\subseteq \beta(b)$. So, $\Box_{T}(\beta(a))\subseteq \Box_{T}(\beta(b))$. Let $T(x)\in H_{b}$. Then, from Remark \ref{Remark util}, we have that $ x\in (\Box_{T}(\beta (b)))^{c}$, thus $T(x) \not\subseteq \beta(b)$. So, $T(x) \not\subseteq \beta(a)$, and again from  Remark \ref{Remark util}, we obtain $T(x)\in H_a$, as desired.
    
\end{proof}

\begin{theorem}
    
Let $\langle \mathbf{A},i,d \rangle\ $ be an SLata and $\langle X, \mathcal{K}, I,D\rangle $ be an \( SLata \) space. If \( T \subseteq X \times \mathcal{X}(\mathbf{A}) \) is a
one-to-one SL- relation, then \( \mathcal{F}_T \) is a SLata-Vietoris-family of
\(\langle \mathcal{X}(\mathbf{A}),\mathcal{K}_{\mathbf{A}}, R_{i},R_{d} \rangle\).
\end{theorem}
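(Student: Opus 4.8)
The plan is to verify directly the three defining conditions of an SLata-Vietoris family (Definition \ref{SLata-Vietoris-type}) for $\mathcal{F}_T$, regarded as a family of subbasic closed subsets of $\langle \mathcal{X}(\mathbf{A}),\mathcal{K}_{\mathbf{A}}\rangle$, and to reduce each of them to the monotone ($mS$-space) situation already settled in \cite{CMZ} together with the gluing Lemma \ref{Analogo Lema 8}. The starting observation is that a one-to-one SLata-relation $T\subseteq X\times\mathcal{X}(\mathbf{A})$ is, in particular, a \emph{monotone} one-to-one meet-relation both from $\langle X,\mathcal{K},I\rangle$ to $\langle \mathcal{X}(\mathbf{A}),\mathcal{K}_{\mathbf{A}},R_i\rangle$ and from $\langle X,\mathcal{K},D\rangle$ to $\langle \mathcal{X}(\mathbf{A}),\mathcal{K}_{\mathbf{A}},R_d\rangle$; this is precisely what the two commuting squares defining a SLata-relation express, read through Definition \ref{one-to-one SLata-relation}. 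Moreover, by Theorem \ref{SLata onto} the map $\Box_T:S(\mathcal{X}(\mathbf{A}))\to S(X)$ is onto.

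First I would apply the monotone-semilattice counterpart of the present statement, established in \cite{CMZ}, separately to $R_i$ and to $R_d$. This yields that $\mathcal{F}_T$ is a monotone lower-Vietoris-type family of each of $\langle \mathcal{X}(\mathbf{A}),\mathcal{K}_{\mathbf{A}},R_i\rangle$ and $\langle \mathcal{X}(\mathbf{A}),\mathcal{K}_{\mathbf{A}},R_d\rangle$. Reading off the definition, this immediately gives condition (1), namely that $\langle \mathcal{F}_T,\mathcal{M}_{\mathcal{F}_T}\rangle$ is an $S$-space, and condition (2), that $R_i[Y]$ and $R_d[Y]$ are $\mathcal{M}_{\mathcal{F}_T}$-increasing for every $Y\in\mathcal{F}_T$. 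It also supplies the main inputs for condition (3): both $\langle \mathcal{F}_T,\mathcal{M},\widehat{R_i}\rangle$ and $\langle \mathcal{F}_T,\mathcal{M},\widehat{R_d}\rangle$ are $mS$-spaces, and the natural surjection $\varepsilon:X\to\mathcal{F}_T$, $\varepsilon(x)=T(x)$, is an isomorphism of $mS$-spaces in each case. Here $\varepsilon$ is actually a bijection: if $T(x)=T(x')$, then $x\in\Box_T(V)\iff x'\in\Box_T(V)$ for all $V\in S(\mathcal{X}(\mathbf{A}))$, whence $x$ and $x'$ are topologically indistinguishable by surjectivity of $\Box_T$, so $x=x'$ by $(S1)$. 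From these isomorphisms one reads off, for all $x\in X$ and $Z\in\mathcal{Z}(X)$,
\[
(x,Z)\in I \iff (\varepsilon(x),\varepsilon[Z])\in\widehat{R_i}, \qquad (x,Z)\in D \iff (\varepsilon(x),\varepsilon[Z])\in\widehat{R_d},
\]
while Remark \ref{Remark util} gives $H_a=\varepsilon[(\Box_T(\beta(a)))^{c}]$, and since $(\Box_T(\beta(a)))^{c}$ ranges over all of $\mathcal{K}$ (using that $\Box_T$ is onto), we get $\mathcal{M}=\mathcal{M}_{\mathcal{F}_T}=\{\varepsilon[V]:V\in\mathcal{K}\}$.

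For condition (3) I would invoke Lemma \ref{Analogo Lema 8}, taking $\langle X,\mathcal{K},I,D\rangle$ as the ambient SLata-space, the $mS$-space $\langle \mathcal{F}_T,\mathcal{M},\widehat{R_i}\rangle$ in the role of its $mS$-space target, and $h=\varepsilon$. Its two hypotheses, namely $\mathcal{M}=\{\varepsilon[V]:V\in\mathcal{K}\}$ and the $\widehat{R_i}$-compatibility displayed above, were secured in the previous paragraph. The lemma then produces a multirelation $T_2\subseteq\mathcal{F}_T\times\mathcal{Z}(\mathcal{F}_T)$ such that $\langle \mathcal{F}_T,\mathcal{M},\widehat{R_i},T_2\rangle$ is an SLata-space (isomorphic to $\langle X,\mathcal{K},I,D\rangle$), and from the construction in its proof $T_2$ is characterized by $(\varepsilon(x),W)\in T_2\iff (x,\varepsilon^{-1}[W])\in D$. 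It then remains to identify $T_2$ with $\widehat{R_d}$: using the $\widehat{R_d}$-compatibility above together with $\varepsilon[\varepsilon^{-1}[W]]=W$ (bijectivity of $\varepsilon$), I would compute
\[
(\varepsilon(x),W)\in T_2 \iff (x,\varepsilon^{-1}[W])\in D \iff (\varepsilon(x),\varepsilon[\varepsilon^{-1}[W]])\in\widehat{R_d} \iff (\varepsilon(x),W)\in\widehat{R_d},
\]
and since $\varepsilon$ is onto this forces $T_2=\widehat{R_d}$. Hence $\langle \mathcal{F}_T,\mathcal{M},\widehat{R_i},\widehat{R_d}\rangle$ is an SLata-space, which is exactly condition (3), and $\mathcal{F}_T$ is an SLata-Vietoris family.

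The step I expect to be the main obstacle is precisely this last identification $T_2=\widehat{R_d}$: one must be certain that the $D$-multirelation manufactured abstractly by Lemma \ref{Analogo Lema 8} coincides with the concrete $\widehat{R_d}$ prescribed in Definition \ref{SLata-Vietoris-type}(3) (equivalently the $D_{\mathcal{F}}$ of the preceding Proposition). This hinges on the fact that the two monotone isomorphisms of the second paragraph are induced by \emph{one and the same} bijection $\varepsilon$, so that the $R_i$- and $R_d$-data are transported coherently along it; once this coherence is in place the remaining verifications are routine, and everything else in the argument is a direct transcription of the monotone results of \cite{CMZ}.
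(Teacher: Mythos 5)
Your strategy is viable, but it is genuinely different from the paper's, and it hides its real work in a step you attribute to \cite{CMZ}. For conditions (1) and (2) you and the paper proceed identically (via the monotone results of \cite{CMZ}). The divergence is condition (3): the paper never constructs an isomorphism $X\cong\mathcal{F}_T$. Instead it argues \emph{algebraically} that $\langle S(\mathcal{F}_T),m_{I_{\mathcal{F}_T}},m_{D_{\mathcal{F}_T}}\rangle$ is an SLata by computing on the generators $H_a^c$: Lemma 17 of \cite{CMZ} gives $m_{I_{\mathcal{F}_T}}(H_a^c)=H_{i(a)}^c$ and $m_{D_{\mathcal{F}_T}}(H_a^c)=H_{d(a)}^c$, Proposition \ref{H preserva el orden} makes $a\mapsto H_a^c$ monotone, and Remark \ref{condicion adjuncion ecuacional} then transfers $i(d(a))\leq a\leq d(i(a))$ into $H^c_{i(d(a))}\subseteq H_a^c\subseteq H^c_{d(i(a))}$; Proposition \ref{SLata-Sp to Slata} converts this back into the statement that $\langle\mathcal{F}_T,\mathcal{M},\widehat{I},\widehat{D}\rangle$ is an SLata-space. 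Your route---transporting the SLata-structure of $X$ along the bijection $\varepsilon$ via Lemma \ref{Analogo Lema 8} and identifying the manufactured multirelation $T_2$ with $\widehat{D}$ (your $\widehat{R_d}$)---mirrors instead the paper's proof of Theorem \ref{SLata Vietoris iso} for onto homomorphisms, and, if completed, proves more than asked: an explicit isomorphism $\langle X,\mathcal{K},I,D\rangle\cong\langle\mathcal{F}_T,\mathcal{M},\widehat{I},\widehat{D}\rangle$.

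The gap is not where you expect it. You flag the identification $T_2=\widehat{R_d}$ as the main obstacle, but that step is routine once one has the pointwise compatibilities $(x,Z)\in I\iff(\varepsilon(x),\varepsilon[Z])\in\widehat{I}$ and $(x,Z)\in D\iff(\varepsilon(x),\varepsilon[Z])\in\widehat{D}$; it is these compatibilities that you have not secured. You say they are "read off" from the monotone counterpart in \cite{CMZ}, but that counterpart (the one the paper cites as Theorem 17 of \cite{CMZ}) only yields that $\langle\mathcal{F}_T,\mathcal{M}\rangle$ is an $S$-space and that $R_i[Y]$, $R_d[Y]$ are $\mathcal{M}$-increasing, and Lemma 17 of \cite{CMZ} adds that the induced structures are $mS$-spaces; the isomorphism-type statements in \cite{CMZ} (Theorem 16, Corollary 3) concern the relation $R_h$ of an \emph{onto homomorphism} and the map $\lambda$, not an arbitrary one-to-one meet-relation out of an abstract $mS$-space. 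So the compatibilities require a proof. They are true, and can be established along these lines: since $\Box_T$ is onto (Theorem \ref{SLata onto}), every $U\in S(X)$ has the form $\Box_T(\beta(a))$, and Remark \ref{Remark util} gives $\varepsilon^{-1}[H_a^c]=\Box_T(\beta(a))$; the two SLata-relation squares and the computation $m_{R_i}(\beta(a))=\beta(i(a))$ of \cite{CMZ} yield $m_I(\varepsilon^{-1}[H_a^c])=\Box_T(\beta(i(a)))=\varepsilon^{-1}[m_{\widehat{I}}(H_a^c)]$, i.e., $\varepsilon$ intertwines the box operators; finally, clause (2) of the $mS$-space definition, which recovers a multirelation from its box operator, upgrades this intertwining to the pointwise equivalences (and similarly for $D$). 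With that argument inserted---together with the observation that $\varepsilon^{-1}[W]\in\mathcal{Z}(X)$ for $W\in\mathcal{Z}(\mathcal{F}_T)$, needed to apply the $D$-compatibility---your application of Lemma \ref{Analogo Lema 8} and the identification $T_2=\widehat{D}$ go through exactly as written. Note that your bijectivity argument for $\varepsilon$ (via surjectivity of $\Box_T$ and $T_0$) is correct. The paper's algebraic route sidesteps this entire issue, which is why it is shorter.
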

\begin{proof}
At this point, it is clear that $\langle\mathcal{F}_{T},\mathcal{M}\rangle$ is an S-space. Now, from Theorem 17 of \cite{CMZ}, it follows that $R_{i}[T(x)]$ and $R_{d}[T(x)]$ are $\mathcal{M}_{\mathcal{F}_T}$-increasing. So, in order to prove our claim, by Definition \ref{SLata-Vietoris-type}, it only remains to show that $\langle \mathcal{F}_T, \mathcal{M}, I_{\mathcal{F}_T}, D_{\mathcal{F}_T}\rangle$ is an SLata-space. To this end, we will show first that $\langle S(\mathcal{F}_T),m_{I_{\mathcal{F}_T}},m_{D_{\mathcal{F}_T}} \rangle$ is an SLata. Thus, we need to proof that for every $a\in A$: 
\begin{equation}\label{ecuacion demostracion}
    m_{I_{\mathcal{F}_T}}(m_{D_{\mathcal{F}_T}}(H_a^c))\subseteq H_a^c\subseteq m_{D_{\mathcal{F}_T}}(m_{I_{\mathcal{F}_T}}(H_a^c)).
\end{equation}
Indeed, from Remark \ref{condicion adjuncion ecuacional}, for all $a\in A$, we have that 
\[i(d(a))\leq a\leq d(i(a)).\]

Then, as a consequence of Proposition \ref{H preserva el orden} and Lemma 17 of \cite{CMZ}, we obtain \[m_{I_{\mathcal{F}_T}}(m_{D_{\mathcal{F}_T}}(H_a^c))=H_{i(d(a))}^c\subseteq H_{a}^c\subseteq H_{d(i(a))}^c=m_{D_{\mathcal{F}_T}}(m_{I_{\mathcal{F}_T}}(H_a^c)).\]
Therefore, (\ref{ecuacion demostracion}) holds. Finally, from Proposition \ref{SLata-Sp to Slata} we have that $\langle\mathcal{F}_{T},\mathcal{M},\widehat{I},\widehat{D} \rangle$ is an SLata-space. Hence, $\mathcal{F}_{T}$ is an SLata-Vietoris family of the dual SLata-space of $\langle \mathbf{A},i,d\rangle$, as desired.

\end{proof}

\begin{theorem}\label{hom sobre a SLata-Vietoris}
    Let \(\langle \mathbf{A}_1,i_{1},d_{1} \rangle\) and \(\langle \mathbf{A}_2, i_{2},d_{2}\rangle \) be two SLata's, and let \( h : A_1 \to A_2 \) be an onto SLata homomorphism.  Then, \( \mathcal{F}_{R_h} \) is a SLata-Vietoris family of \(\langle \mathcal{X}(\mathbf{A}_1),\mathcal{K}_{\mathbf{A}_1}, R_{i_{1}},R_{d_{1}} \rangle\).
\end{theorem}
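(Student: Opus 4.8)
The plan is to recognize this statement as the natural specialization of the unlabeled theorem immediately preceding it, obtained by replacing the abstract one-to-one SLata-relation $T$ with the concrete relation $R_h$ attached to the homomorphism $h$. That earlier theorem asserts: given an SLata $\langle \mathbf{A}, i, d\rangle$, an SLata-space $\langle X, \mathcal{K}, I, D\rangle$, and a one-to-one SLata-relation $T \subseteq X \times \mathcal{X}(\mathbf{A})$, the family $\mathcal{F}_T$ is a SLata-Vietoris family of $\langle \mathcal{X}(\mathbf{A}), \mathcal{K}_{\mathbf{A}}, R_i, R_d\rangle$. So the entire task reduces to producing the right instantiation of the data and verifying that its hypotheses are met; there is essentially no new mathematical content to generate, only a matching of objects.

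Concretely, I would set $\mathbf{A} = \mathbf{A}_1$ (so that the target poset is $\mathcal{X}(\mathbf{A}_1)$ and the dual SLata-space in the conclusion is precisely $\langle \mathcal{X}(\mathbf{A}_1), \mathcal{K}_{\mathbf{A}_1}, R_{i_1}, R_{d_1}\rangle$), and take the source SLata-space to be the dual of $\langle \mathbf{A}_2, i_2, d_2\rangle$, namely $\langle \mathcal{X}(\mathbf{A}_2), \mathcal{K}_{\mathbf{A}_2}, R_{i_2}, R_{d_2}\rangle$; this is indeed an SLata-space by Proposition \ref{SLata to Slata-Sp}. The candidate relation is $T = R_h \subseteq \mathcal{X}(\mathbf{A}_2) \times \mathcal{X}(\mathbf{A}_1)$, given by $(P, Q) \in R_h \iff h^{-1}[P] \subseteq Q$, which has exactly the shape $X \times \mathcal{X}(\mathbf{A})$ required by the earlier theorem.

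It then remains to check that $R_h$ is a one-to-one SLata-relation. That it is a SLata-relation follows from the discussion preceding the functor $G$: since $h$ is a SLata morphism, $R_h$ is a SLata-relation between the respective dual SLata-spaces. That it is one-to-one follows from the hypothesis that $h$ is onto together with Corollary \ref{SLata one-to-one}, which states precisely that $h$ is onto if and only if $R_h$ is a one-to-one SLata-relation. With both hypotheses confirmed, applying the preceding theorem with the instantiation above yields that $\mathcal{F}_{R_h}$ is a SLata-Vietoris family of $\langle \mathcal{X}(\mathbf{A}_1), \mathcal{K}_{\mathbf{A}_1}, R_{i_1}, R_{d_1}\rangle$, which is the desired conclusion. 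I do not anticipate any genuine obstacle here: the only point requiring care is bookkeeping the direction of $R_h$ (source $\mathcal{X}(\mathbf{A}_2)$, target $\mathcal{X}(\mathbf{A}_1)$, consistent with the contravariance of $G$) so that the indices $i_1, d_1$ versus $i_2, d_2$ land in the correct slots of the cited theorem.
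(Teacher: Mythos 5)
Your proof is correct, but it is not the route the paper takes, and the difference is worth noting. The paper proves the theorem by directly verifying the three conditions of Definition \ref{SLata-Vietoris-type} for $\mathcal{F}_{R_h}$: condition (1) via Corollary 3 of \cite{CMZ}, condition (2) via Theorem 16 of \cite{CMZ}, and condition (3) via Theorem \ref{SLata Vietoris iso}, which transports the SLata-space structure of $\langle \mathcal{X}(\mathbf{A}_2),\mathcal{K}_{\mathbf{A}_2},R_{i_2},R_{d_2}\rangle$ across the bijection $\lambda$ to obtain the relations $I_h, D_h$ on $\mathcal{F}_{R_h}$. You instead specialize the unlabeled theorem immediately preceding this one (that $\mathcal{F}_T$ is an SLata-Vietoris family of $\langle \mathcal{X}(\mathbf{A}),\mathcal{K}_{\mathbf{A}},R_i,R_d\rangle$ for any one-to-one SLata-relation $T$), instantiating $\mathbf{A}=\mathbf{A}_1$, taking the source space to be the dual SLata-space of $\mathbf{A}_2$ (legitimate by Proposition \ref{SLata to Slata-Sp}), and $T=R_h$; the two hypotheses are then supplied exactly by the discussion preceding the functor $G$ (an SLata morphism $h$ yields an SLata-relation $R_h$) and by Corollary \ref{SLata one-to-one} ($h$ onto iff $R_h$ one-to-one). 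This is sound and non-circular, since that general theorem is proved beforehand and independently, via Proposition \ref{H preserva el orden} and Lemma 17 of \cite{CMZ}, with the SLata-space condition checked for the intrinsic relations $I_{\mathcal{F}_T}, D_{\mathcal{F}_T}$. What your reduction buys is economy: no condition of the definition is re-verified and Theorem \ref{SLata Vietoris iso} is not needed at all. What the paper's route buys is the extra, explicitly recorded fact that $\langle \mathcal{F}_{R_h},\mathcal{M},I_h,D_h\rangle$ is isomorphic as an SLata-space to the dual space of $\mathbf{A}_2$ --- i.e., the Vietoris family is identified with the dual of the quotient --- which is precisely the picture exploited in the congruence correspondence (Theorem \ref{Congruences ELatas}) and the closing diagram of the section.
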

\begin{proof}
Observe that from Corollary 3 of \cite{CMZ}, $\langle \mathcal{F}_{R_h}, \mathcal{M}
\rangle$ is an $S$-space (isomorphic to $\langle \mathcal{X}(\mathbf{A}_2), \mathcal{K}_{\mathbf{A}_2}\rangle$). Additionally, from Theorem 16 of \cite{CMZ} it follows that, \( R_{i_1}[R_h(P)] \) and \( R_{d_1}[R_h(P)] \) are \( \mathcal{M} \)-increasing for all \( P \in \mathcal{X}(\mathbf{A}_2) \). Finally, from Theorem \ref{SLata Vietoris iso}, we have that $\langle \mathcal{F}_{R_h}, \mathcal{M},I_h, D_h \rangle$ is an SLata-space (isomorphic to \(\langle \mathcal{X}(\mathbf{A}_2),\mathcal{K}_{\mathbf{A}_2}, R_{i_{2}},R_{d_{2}} \rangle\)). Hence, by Definition \ref{SLata-Vietoris-type}, the result holds.   
\end{proof}
\\

The following result is an immediate consequence of Theorem \ref{hom sobre a SLata-Vietoris} and establishes the method of obtaining an SLata-Vietoris family from a congruence of SLata's.

\begin{corollary}\label{congruencia a SLata-Vietoris}
     Let $\langle\mathbf{A},i,d\rangle$ be an SLata, and let $\theta \subseteq A \times A$ be a congruence on it. Then, the following family:
     \[
\mathcal{F}_{\theta} = \{ R_{q_{\theta}}(Q) : Q \in \mathcal{X}(\mathbf{A} / \theta) \}, 
\]
where $q_{\theta} : A \rightarrow A / \theta$ denotes the canonical map, is non-empty subset of $ C_{\mathcal{K}_{\mathbf{A}}}(\mathcal{X}(\mathbf{A}))$ that constitutes a SLata-Vietoris family of \(\langle \mathcal{X}(\mathbf{A}),\mathcal{K}_{\mathbf{A}}, R_{i},R_{d} \rangle\).
\end{corollary}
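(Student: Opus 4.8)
The plan is to reduce the statement to Theorem \ref{hom sobre a SLata-Vietoris} by presenting the congruence $\theta$ through its canonical quotient map. Since the class of SLata's forms a variety (Remark \ref{condicion adjuncion ecuacional}), standard universal-algebra facts guarantee that the quotient $\mathbf{A}/\theta = \langle A/\theta, i_\theta, d_\theta\rangle$ is again an SLata and that the canonical projection $q_\theta : \mathbf{A} \to \mathbf{A}/\theta$ is a surjective morphism in $\mathsf{SLata}$; indeed, $q_\theta$ preserves $\wedge$, $1$, $i$ and $d$ precisely because $\theta$ is compatible with every operation of the signature. This is the only place where the hypothesis ``$\theta$ is a congruence'' is genuinely used.

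First I would identify $\mathcal{F}_\theta$ with the family $\mathcal{F}_{R_{q_\theta}}$. Recalling that for a one-to-one SLata-relation $T \subseteq X \times \mathcal{X}(\mathbf{A})$ one sets $\mathcal{F}_T = \{T(x) : x \in X\}$, and that $R_{q_\theta} \subseteq \mathcal{X}(\mathbf{A}/\theta) \times \mathcal{X}(\mathbf{A})$ is given by $(Q,P) \in R_{q_\theta} \iff q_\theta^{-1}[Q] \subseteq P$, one reads off directly that $\mathcal{F}_{R_{q_\theta}} = \{R_{q_\theta}(Q) : Q \in \mathcal{X}(\mathbf{A}/\theta)\}$, which is exactly the displayed $\mathcal{F}_\theta$. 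Here I would invoke Corollary \ref{SLata one-to-one} to record that, $q_\theta$ being onto, $R_{q_\theta}$ is a one-to-one SLata-relation, so that the whole $\mathcal{F}_T$ machinery applies; in particular each member $R_{q_\theta}(Q)$ is an intersection of subbasic closed sets by the defining property of meet-relations, hence lies in $C_{\mathcal{K}_\mathbf{A}}(\mathcal{X}(\mathbf{A}))$.

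With these identifications in place, I would apply Theorem \ref{hom sobre a SLata-Vietoris} with $\mathbf{A}_1 = \mathbf{A}$, $\mathbf{A}_2 = \mathbf{A}/\theta$ and $h = q_\theta$, which immediately yields that $\mathcal{F}_{R_{q_\theta}} = \mathcal{F}_\theta$ is an SLata-Vietoris family of $\langle \mathcal{X}(\mathbf{A}), \mathcal{K}_\mathbf{A}, R_i, R_d\rangle$. Non-emptiness comes for free from the conclusion, since being such a family requires $\langle \mathcal{F}_\theta, \mathcal{M}\rangle$ to be an $S$-space, and an $S$-space must be covered by its subbase, forcing $\mathcal{F}_\theta \neq \emptyset$. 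There is no substantial obstacle in this argument: the statement is genuinely a corollary, and the only steps demanding any care are the purely formal check that the abstract definition of $\mathcal{F}_T$ specializes to the displayed description of $\mathcal{F}_\theta$, together with the observation that the quotient by an SLata congruence produces an onto SLata morphism, both of which are routine once the variety structure and Corollary \ref{SLata one-to-one} are in hand.
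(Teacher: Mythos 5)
Your proposal is correct and follows exactly the paper's route: the paper obtains this corollary as an immediate consequence of Theorem \ref{hom sobre a SLata-Vietoris} applied to the onto SLata morphism $q_{\theta}:\mathbf{A}\to\mathbf{A}/\theta$ provided by the variety structure, together with the identification $\mathcal{F}_{\theta}=\mathcal{F}_{R_{q_{\theta}}}$ that you spell out. The only soft spot is your non-emptiness argument (an empty family would also vacuously satisfy the covering condition; non-emptiness really rests on $\mathcal{X}(\mathbf{A}/\theta)\neq\emptyset$), but the paper leaves this point equally implicit.
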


Now we are able to state the main result of this section. It shows that there is a bijective correspondence between congruences of SLata's and SLata-Vietoris families of SLata-spaces. Its proof is analogue to the proofs of Theorems 10, 11 and 12, of \cite{CMZ}, respectively, so we omit them.

\begin{theorem}\label{Congruences ELatas}
Let $\langle \mathbf{A},i,d \rangle$  be an SLata. Then, the following hold:
\begin{enumerate}
        \item If $\theta$ is a congruence on $\langle \mathbf{A},i,d \rangle$, and $\mathcal{F}_\theta$ is the SLata-Vietoris family of its dual SLata-space obtained in Corollary \ref{congruencia a SLata-Vietoris}, then $\theta = \theta_{\mathcal{F}_\theta}$.
        \item If $\mathcal{F}$ is an SLata-Vietoris family of the dual $SLata$ space of $\mathbf{A}$, then 
\[
\theta_\mathcal{F} = \{(a, b) \in A^2 : [\beta(a)^{c}]_{\mathcal{F}}^{-} = [\beta(b)^{c}]_{\mathcal{F}}^{-}\}. 
\]
   is a congruence on $\langle \mathbf{A},i,d \rangle $ such that $\mathcal{F} = \mathcal{F}_{\theta_\mathcal{F}}$. Moreover, $\theta_{\mathcal{F}}$ coincides with the kernel of the homomorphism $\Box_{T_{\mathcal{F}}}\beta:\mathbf{A}\to S(\mathcal{F})$.
\end{enumerate}
Therefore, there is a one-to-one correspondence between the congruences of $\langle \mathbf{A},i,d\rangle$ and the SLata-Vietoris families of its dual SLata-space.
\end{theorem}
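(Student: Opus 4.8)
The plan is to establish the bijective correspondence by showing that the two assignments $\theta \mapsto \mathcal{F}_\theta$ and $\mathcal{F} \mapsto \theta_\mathcal{F}$ are mutually inverse, which is precisely what parts (1) and (2) of the statement assert. Since the theorem text itself defers to the analogues in \cite{CMZ} (Theorems 10, 11, and 12), I would follow that same strategy, lifting the semilattice-level arguments to the SLata setting by invoking the structural facts already proved in Sections \ref{Semilattices with an adjunction} and \ref{SLata-Vietoris families}.

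First I would prove part (2), since it underlies part (1). Given an SLata-Vietoris family $\mathcal{F}$, the key observation is that the map $\Box_{T_\mathcal{F}}\beta : \mathbf{A} \to S(\mathcal{F})$ is an SLata homomorphism: it is a semilattice homomorphism because $T_\mathcal{F}$ is a one-to-one meet-relation (Lemma 16 of \cite{CMZ}), and it commutes with $i,d$ because the relations $I_\mathcal{F}, D_\mathcal{F}$ were defined precisely so that $\langle \mathcal{F}, \mathcal{M}, I_\mathcal{F}, D_\mathcal{F}\rangle$ is an SLata-space dual to the appropriate structure (using $\widehat{I} = I_\mathcal{F}$, $\widehat{D} = D_\mathcal{F}$). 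Then I would verify that $\theta_\mathcal{F}$ as displayed is exactly the kernel of this homomorphism: the condition $[\beta(a)^c]^-_\mathcal{F} = [\beta(b)^c]^-_\mathcal{F}$ unwinds, via the identification $\mathcal{M}_\mathcal{F} = \{H_a : a \in A\}$ and Remark \ref{Remark util}, to $\Box_{T_\mathcal{F}}(\beta(a)) = \Box_{T_\mathcal{F}}(\beta(b))$. Since SLata is a variety and kernels of homomorphisms are congruences, $\theta_\mathcal{F}$ is a congruence compatible with $i$ and $d$. The recovery $\mathcal{F} = \mathcal{F}_{\theta_\mathcal{F}}$ then follows because $\Box_{T_\mathcal{F}}\beta$ is onto (by Theorem \ref{SLata onto}, as $T_\mathcal{F}$ is one-to-one), so $S(\mathcal{F}) \cong \mathbf{A}/\theta_\mathcal{F}$ as SLata's, and tracing the construction of $\mathcal{F}_{\theta_\mathcal{F}}$ through Corollary \ref{congruencia a SLata-Vietoris} returns the original family up to the canonical identification.

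For part (1), starting from a congruence $\theta$, I would use Corollary \ref{congruencia a SLata-Vietoris} to obtain $\mathcal{F}_\theta = \{R_{q_\theta}(Q) : Q \in \mathcal{X}(\mathbf{A}/\theta)\}$, which is a genuine SLata-Vietoris family by Theorem \ref{hom sobre a SLata-Vietoris} applied to the onto SLata morphism $q_\theta$. I would then compute $\theta_{\mathcal{F}_\theta}$ using the formula from part (2): by the final clause of part (2), $\theta_{\mathcal{F}_\theta}$ is the kernel of $\Box_{T_{\mathcal{F}_\theta}}\beta$, and one identifies this homomorphism with $q_\theta$ (post-composed with the isomorphism $\mathbf{A}/\theta \cong S(\mathcal{F}_\theta)$ furnished by Corollary 3 of \cite{CMZ} together with Proposition \ref{unit duality}). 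Since the kernel of $q_\theta$ is $\theta$ itself, we conclude $\theta = \theta_{\mathcal{F}_\theta}$.

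The main obstacle I anticipate is not in the semilattice bookkeeping — that transfers almost verbatim from \cite{CMZ} — but in verifying that the two \emph{extra} operators behave correctly under the correspondence, namely that the kernel congruence $\theta_\mathcal{F}$ is genuinely compatible with $i$ and $d$ and not merely with $\wedge$, and dually that $\mathcal{F}_\theta$ satisfies condition (2) and (3) of Definition \ref{SLata-Vietoris-type} rather than only being a lower-Vietoris-type family. This is exactly where the adjunction structure enters: the compatibility with $i,d$ hinges on the fact that $\Box_{T_\mathcal{F}}\beta$ preserves $i$ and $d$, which in turn rests on the SLata-space axioms for $\langle\mathcal{F},\mathcal{M},I_\mathcal{F},D_\mathcal{F}\rangle$ established in the preceding proposition via Proposition \ref{SLata-Sp to Slata}. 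Once the homomorphism is confirmed to be an SLata morphism, its kernel is automatically an SLata congruence, and the adjoint-pair equations $i(d(a)) \le a \le d(i(a))$ are preserved because they are equational (Remark \ref{condicion adjuncion ecuacional}); threading this observation through both directions is the crux that makes the otherwise routine transfer from \cite{CMZ} go through.
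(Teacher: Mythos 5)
Your proposal is correct and takes essentially the same approach as the paper: the paper itself omits the details, deferring to the analogues of Theorems 10, 11 and 12 of \cite{CMZ}, and your argument---realizing $\theta_{\mathcal{F}}$ as the kernel of the SLata morphism $\Box_{T_{\mathcal{F}}}\beta$ (a congruence compatible with $i$ and $d$ since the morphism commutes with the operators), and identifying $\Box_{T_{\mathcal{F}_\theta}}\beta$ with $q_\theta$ composed with the canonical isomorphism---is exactly the transfer of those arguments that the paper intends, as summarized in its closing diagram. Your emphasis on where the adjunction structure enters (compatibility of the kernel with $i,d$ via the SLata-space axioms for $\langle \mathcal{F},\mathcal{M},I_{\mathcal{F}},D_{\mathcal{F}}\rangle$ and Remark \ref{condicion adjuncion ecuacional}) matches the role played by Theorem \ref{hom sobre a SLata-Vietoris} and the proposition preceding it.
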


We conclude this section with a diagram that summarizes the workings of Theorem \ref{Congruences ELatas}. Specifically, the left column illustrates the entire process described in Theorem \ref{Congruences ELatas} (1), showing how, from a congruence of an SLata, we derive the associated SLata-Vietoris family of its dual SLata-space. Conversely, the right column presents the analogous process described in Theorem \ref{Congruences ELatas} (2), which reverses the steps in the left column. Here, $\mathcal{V}(\mathcal{X}(\mathbf{A}))$ represents the family of SLata-Vietoris families of the dual SLata-space of an SLata $\langle \mathbf{A},i,d\rangle$, and $\mathsf{Con}(\langle \mathbf{A},i,d\rangle)$ denote its set of congruences.

\begin{displaymath}
    \begin{array}{cccc}
        \begin{array}{c}
             \theta\in \mathsf{Con}(\langle \mathbf{A},i,d\rangle) \\
             \hline
             \\
             q_{\theta}:A\to A/\theta\\
             \hline
             \\
             R_{q_\theta}\subseteq \mathcal{X}(\mathbf{A}/\theta)\times \mathcal{X}(\mathbf{A}), \\
             \hline
             \\
             \mathcal{F}_{R_{q_\theta}}\in \mathcal{V}(\mathcal{X}(\mathbf{A}))
         \end{array} & & & \begin{array}{c}
             \mathcal{F}\in \mathcal{V}(\mathcal{X}(\mathbf{A}))\\
             \hline
             \\
             T_{\mathcal{F}}\subseteq \mathcal{F}\times \mathcal{X}(\mathbf{A})\\
             \hline
             \\            \Box_{T_{\mathcal{F}}}:S(\mathbf{A})\to S(\mathcal{F})\\
             \hline
             \\    
             ker(\Box_{T_{\mathcal{F}}}\beta)\in \mathsf{Con}(\langle \mathbf{A},i,d\rangle)
        \end{array} \\
    \end{array}
\end{displaymath}

\section{Ewald-Semilattices with adjunctions}\label{Ewald-Semilattices with adjunctions} 

Most algebraic structures related to tense logic are algebraic structures with a semilattice reduct, endowed with a quadruple of unary connectives that behave as pairs of adjunctions (or Galois connections). Examples include Tense algebras, IKt-algebras \cite{Figallo14}, Tense MV-algebras \cite{DG0207}, Lattices with adjunctions \cite{MS2014}, Tense distributive lattices with implication \cite{PZ2023}, and Negative Heyting algebras \cite{APZ2023}, to mention a few. Thus, it is natural to study meet-semilattices equipped with pairs of adjunctions that are suitably related. We refer to these structures as Ewald-Semilattices with adjunctions (ESLatas). These algebras are of particular interest because they provide a sufficiently general framework to encompass several structures within the context of tense logics.

\

The goal of this section is to demonstrate that the techniques developed so far for obtaining a categorical duality for SLatas can be successfully adapted to achieve a categorical duality for ESLatas.

\begin{definition}\label{Definicion ELata}
Let $\mathbf{A}=\langle A,\wedge,1\rangle$ be a semilattice. An Ewald-Semilattice with adjunctions (E-SLata, for short) is a structure $\langle\mathbf{A},P,G,F,H\rangle$ such that \( G \), \( H \), \( F \), and \( P \) are unary operators on \( A \) satisfying the following conditions:

\begin{itemize}
\item[{\rm (T1)}] \( P \dashv G \), 
\item[{\rm (T2)}] \( F \dashv H \), 
\item[{\rm (T3)}] \( G(x) \land F(y) \leq F(x \land y) \) and \( H(x) \land P(y) \leq P(x \land y) \).
\end{itemize}

We refer to \( G, H, F, \) and \( P \) as tense operators.
\end{definition}

It is readily seen from the previous Definition that the class of E-SLatas is a variety.

\begin{example}\label{fig1} Let us consider the semilattice  ${\bf A}$ visualized in Fig. \ref{fig: 1}.

\vspace{1.25cm}

\begin{figure}[htbp]
\begin{center}
\hspace{0.25cm}
\begin{picture}(-50,50)(0,0)

\put(00,00){\makebox(1,1){$\bullet$}}
\put(-20,20){\makebox(1,1){$\bullet$}}
\put(20,20){\makebox(1,1){$\bullet$}}
\put(00,40){\makebox(1,1){$\bullet$}}
\put(-40,40){\makebox(1,1){$\bullet$}}
\put(40,40){\makebox(1,1){$\bullet$}}
\put(-20,60){\makebox(1,1){$\bullet$}}
\put(20,60){\makebox(1,1){$\bullet$}}
\put(00,80){\makebox(1,1){$\bullet$}}
\put(00,00){\line(1,1){40}}
\put(00,00){\line(-1,1){40}}
\put(-40,40){\line(1,1){40}}
\put(40,40){\line(-1,1){40}}
\put(-20,20){\line(1,1){40}}
\put(20,20){\line(-1,1){40}}
\put(00,-10){\makebox(2,2){$ 0$}}
\put(00,90){\makebox(2,2){$ 1$}}
\put(-30,20){\makebox(2,2){$ a$}}
\put(30,20){\makebox(2,2){$ b$}}
\put(10,40){\makebox(2,2){$ d$}}
\put(-50,40){\makebox(2,2){$ c$}}
\put(50,40){\makebox(2,2){$ e$}}
\put(-30,60){\makebox(2,2){$ f$}}
\put(30,60){\makebox(2,2){$ g$}}
\end{picture}\caption{}
 \label{fig: 1}
\end{center}
\end{figure}

We define the operators $G$, $H$, $F$ and $P$ by the following table:

\begin{center}
\begin{tabular}{|c|c|c|c|c|c|c|c|c|c|}\hline
{\scriptsize  x}& {\scriptsize 0}& {\scriptsize  a} & {\scriptsize  b}  & {\scriptsize  c}  & {\scriptsize   d}  & {\scriptsize  e}  & {\scriptsize  f}  & {\scriptsize g}  & {\scriptsize  1}  \\ \hline
 {\scriptsize G(x)}  & {\scriptsize  0}   & {\scriptsize 0}  & {\scriptsize b}  & {\scriptsize 0} & {\scriptsize d}  & {\scriptsize  e} & {\scriptsize  d}  & {\scriptsize g} &   {\scriptsize 1} \\ 

{\scriptsize H(x)} & {\scriptsize  0}  & {\scriptsize a} & {\scriptsize 0} & {\scriptsize c} & {\scriptsize d} & {\scriptsize 0} & {\scriptsize f} & {\scriptsize  d } &  {\scriptsize 1 }                                                               \\ 

{\scriptsize F(x)} &  {\scriptsize 0} & {\scriptsize a} & {\scriptsize d} & {\scriptsize c} & {\scriptsize d} & {\scriptsize 1} & {\scriptsize f} & {\scriptsize 1} & {\scriptsize 1} \\ 

{\scriptsize P(x)} & {\scriptsize  0}  & {\scriptsize d}  & {\scriptsize b} & {\scriptsize 1} & {\scriptsize d} & {\scriptsize e} & {\scriptsize 1} & {\scriptsize g} & {\scriptsize 1} \\ \hline
\end{tabular}
\end{center}
\vspace{2mm}

It is easy to see that  $\langle{\bf A},G,H,F,P\rangle$ is an E-SLata.
\end{example}

\begin{lemma}
Let $\mathbf{A}$ be a semilattice and let $G, H, F, P$ be unary operators on $A$. The following conditions are equivalent.

\begin{enumerate}
    \item [(a)] $\langle \mathbf{A}, P,G,F,H \rangle$ is an E-SLata.
    
    \item [(b)]The following hold:

        \item[b1)] $Gx \wedge Fy \leq F(x \wedge y)$ and $Hx \wedge Py \leq P(x \wedge y)$,
        \item[b2)] $x \leq GPx$ and $x \leq HFx$,
        \item[b3)] $PGx \leq x$ and $FHx \leq x$.
    \end{enumerate}

\end{lemma}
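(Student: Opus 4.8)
The plan is to prove the equivalence by unwinding Remark \ref{condicion adjuncion ecuacional}, which already tells us that an adjunction is equivalent to a pair of inequalities. Specifically, the definition of an E-SLata (condition (a)) requires (T1) $P \dashv G$, (T2) $F \dashv H$, and (T3) the two inequalities. By Remark \ref{condicion adjuncion ecuacional}, the adjunction $P \dashv G$ is equivalent to $P(G(x)) \leq x \leq G(P(x))$ for all $x$, and likewise $F \dashv H$ is equivalent to $F(H(x)) \leq x \leq H(F(x))$. Thus the content of the lemma is essentially that (T1) $+$ (T2) can be repackaged as the four inequalities appearing in (b2) and (b3), while (T3) is literally restated as (b1).

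First I would establish (a) $\Rightarrow$ (b). Assuming $\langle \mathbf{A}, P, G, F, H\rangle$ is an E-SLata, condition (b1) is immediate since it is exactly (T3). For (b2) and (b3), I would apply Remark \ref{condicion adjuncion ecuacional} to each adjunction: from $P \dashv G$ we directly read off $PGx \leq x$ and $x \leq GPx$, and from $F \dashv H$ we read off $FHx \leq x$ and $x \leq HFx$. Collecting these yields precisely (b2) and (b3).

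For the converse (b) $\Rightarrow$ (a), I would again invoke Remark \ref{condicion adjuncion ecuacional}, but now in the reverse direction. The inequalities $PGx \leq x \leq GPx$ (from (b3) and (b2)) are exactly the equational characterization of $P \dashv G$, so (T1) holds; similarly $FHx \leq x \leq HFx$ give (T2). Finally, (b1) is verbatim (T3). Hence $\langle \mathbf{A}, P, G, F, H\rangle$ satisfies (T1)--(T3) and is an E-SLata.

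The only subtlety worth flagging is bookkeeping of which operator is the left adjoint and which is the right in each pair, so that the inequalities are matched correctly with the statements in (b2) and (b3): in $P \dashv G$ the left adjoint is $P$, giving $PG \leq \mathrm{id} \leq GP$, and in $F \dashv H$ the left adjoint is $F$, giving $FH \leq \mathrm{id} \leq HF$. Beyond this matching, there is no real obstacle: the entire proof is a direct translation through Remark \ref{condicion adjuncion ecuacional}, with (T3) and (b1) coinciding. The argument does not even require the semilattice structure beyond what is needed to state the order, since the equivalence of the adjunction with its two defining inequalities is purely order-theoretic.
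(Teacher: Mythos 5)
Your direction (a) $\Rightarrow$ (b) is fine (and the paper itself gives no proof of this lemma to compare against), but your converse direction has a genuine gap. You invoke Remark \ref{condicion adjuncion ecuacional} ``in the reverse direction'' to conclude that the inequalities $PGx \leq x \leq GPx$ alone yield $P \dashv G$ (and similarly $FHx \leq x \leq HFx$ yield $F \dashv H$). For arbitrary unary operators this is false: the unit/counit inequalities characterize an adjunction only when both maps are already known to be monotone (this is the actual content of the standard fact in Chapter 7 of \cite{DP2022}), and monotonicity is precisely what condition (b) does not give you. Your closing claim that ``the equivalence of the adjunction with its two defining inequalities is purely order-theoretic'' is where the error hides: it is order-theoretic \emph{for monotone maps}, not for arbitrary ones. (The paper's Remark \ref{condicion adjuncion ecuacional} is itself stated too strongly --- it also tacitly assumes monotonicity --- so your proof inherits a defect already present in the paper, but the gap is real.)

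Concretely, the implication (b) $\Rightarrow$ (a) fails as stated: let $\mathbf{A}$ be the two-element chain $0 < 1$ and let $G = H = F = P$ all be the complementation map $0 \mapsto 1$, $1 \mapsto 0$. Then $GPx = PGx = HFx = FHx = x$, so (b2) and (b3) hold, and (b1) reads $\neg x \wedge \neg y \leq \neg(x \wedge y)$, which is easily checked in all four cases; yet $P \dashv G$ fails (for instance $P(0) \leq 0$ is false while $0 \leq G(0)$ is true), since a non-monotone map can never be one component of an adjoint pair. Hence no argument can close the gap with the hypotheses as written: a correct version of the lemma must either assume $G, H, F, P$ monotone, or add monotonicity to condition (b) (e.g.\ as the inequalities $G(x \wedge y) \leq G(x) \wedge G(y)$, and likewise for $H$, $F$, $P$). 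Once monotonicity is available, your translation through the unit/counit inequalities goes through verbatim, by the standard characterization of adjoint pairs between posets, and this is surely the argument the authors intended.
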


\begin{proposition}
    Let $ \langle \mathbf{A},P,G,F,H \rangle$ an E-Lata. Then
    \begin{enumerate}
        \item [a)] $F(x \land y) \leq F(x) \land F(y) $ and  $ P(x \land y) \leq P(x) \land P(y) $
        \item[b)] $ x \land F(y) \leq F(P(x) \land y)$ and $ x \land P(y) \leq P(F(x) \land y) $
        \item[c)] $ F = FHF $, $ P = PGP $, $ G = GPG $ , $ H = HFH $

    \end{enumerate}
\end{proposition}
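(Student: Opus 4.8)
The plan is to read off all three statements from the general theory of adjoint pairs recorded in the preceding Lemma, using only the monotonicity of the left adjoints $P$ and $F$, the unit and counit inequalities (b2)--(b3), and the mixed laws (T3) (equivalently (b1)). No global meet- or join-preservation is needed; indeed item (a) is \emph{exactly} the monotonicity half of such a law, and its converse would fail in general, since a left adjoint preserves joins but not meets.

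First I would establish item (a). Since $P \dashv G$ and $F \dashv H$, the maps $P$ and $F$ are left adjoints and hence monotone: for instance, if $a \leq b$ then $a \leq b \leq H(F(b))$ by the unit (b2), so $F(a) \leq F(b)$ by (T2). Applying this to $x \wedge y \leq x$ and $x \wedge y \leq y$ yields $F(x \wedge y) \leq F(x)$ and $F(x \wedge y) \leq F(y)$, whence $F(x \wedge y) \leq F(x) \wedge F(y)$; the inequality $P(x \wedge y) \leq P(x) \wedge P(y)$ is obtained in precisely the same way.

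Next, for item (b) the key move is a substitution into (T3). Replacing $x$ by $P(x)$ in the first clause of (T3) gives $G(P(x)) \wedge F(y) \leq F(P(x) \wedge y)$, and combining this with the unit $x \leq G(P(x))$ from (b2) produces the chain $x \wedge F(y) \leq G(P(x)) \wedge F(y) \leq F(P(x) \wedge y)$. Symmetrically, substituting $F(x)$ for $x$ in the second clause of (T3) gives $H(F(x)) \wedge P(y) \leq P(F(x) \wedge y)$, and the unit $x \leq H(F(x))$ then yields $x \wedge P(y) \leq P(F(x) \wedge y)$.

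Finally, item (c) consists of the triangle identities, which in a poset collapse to genuine equalities. From $P \dashv G$ we get $P(x) \leq P(G(P(x)))$ by applying the monotone map $P$ to the unit $x \leq G(P(x))$, and $P(G(P(x))) \leq P(x)$ by the counit (b3) evaluated at $P(x)$; hence $P = PGP$, and dually $G = GPG$. The adjunction $F \dashv H$ yields $F = FHF$ and $H = HFH$ by the identical argument. The only point demanding care is in item (b): one must pair the correct substitution ($P(x)$ with the first clause of (T3), $F(x)$ with the second) with the matching unit, since any mismatch severs the chain of inequalities; beyond this bookkeeping, the proof presents no real obstacle.
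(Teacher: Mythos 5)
Your proposal is correct and follows essentially the same route as the paper: items (a) and (c) from monotonicity of the adjoints together with the unit/counit inequalities, and item (b) via the chain $x \wedge F(y) \leq G(P(x)) \wedge F(y) \leq F(P(x) \wedge y)$, which is exactly the paper's displayed argument. You merely spell out the details (monotonicity, the triangle identities) that the paper leaves implicit.
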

\begin{proof}
    Notice that (a) and (c) follows from (T1), (T2) and the fact that $F$ and $P$ are monotone. For (b), observe that by (T1), (T3) and Remark \ref{condicion adjuncion ecuacional}, we obtain  \[x\wedge F(y)\leq G(P(x))\wedge F(y)\leq F(P(x)\wedge y).\]
    The proof of the remaining inequality is similar. 
\end{proof}

\ 

We write $\mathsf{ESLata}$ for the category whose objects are E-SLatas, and whose morphisms are semilattice homomorphisms that preserve all the tense operators. Composition is defined as the usual composition of functions, and the identity morphism is the identity map. When no clarification is needed, we also write $\mathsf{ESLata}$ for the corresponding variety.

\

 In what follows, we will present the topological definitions that will allow us to establish the desired duality:
\begin{definition}
     A structure $\langle X,\mathcal{K},I_{1}, D_{1}, I_{2}, D_{2}\rangle$ is an E-SLata space if verified: 
     \begin{itemize}         
         \item [E1)] $\langle X,\mathcal{K},I_{1},D_{1}\rangle$ and $\langle X,\mathcal{K},I_{2},D_{2}\rangle$ are SLata-spaces.
         \item[E2)] For every $x \in X$, and  $U,V \in S(X)$, if $D_{1}(x)\subseteq L_{U}$ and $I_{2}(x)\subseteq L_{V}$ then $I_{2}(x)\subseteq L_{U \cap V}$.
         \item[E3)]  For every $x \in X$, and  $U,V \in S(X)$,  if $D_{2}(x)\subseteq L_{U}$ and $I_{1}(x)\subseteq L_{V}$ then $I_{1}(x)\subseteq L_{U \cap V}$.
     \end{itemize}
\end{definition}

Let $\langle X_1,\mathcal{K}_1, I_{1}, D_{1}, I_{2}, D_{2}\rangle$ and $\langle X_2,\mathcal{K}_2, J_{1}, M_{1}, J_{2}, M_{2}\rangle$ be two ESLata-spaces. We say that a relation $T \subseteq X_1 \times X_2$ is an \emph{ESLata-relation} if it is an SLata-relation between the SLata-spaces $\langle X_1, \mathcal{K}_1, I_{l}, D_{l} \rangle$ and $\langle X_2, \mathcal{K}_2, J_{l}, M_{l} \rangle$, for $l = 1, 2$. We write $\mathsf{ESLataSp}$ for the category whose objects are ESLata-spaces and whose morphisms are ESLata-relations. Composition is defined as the composition of meet-relations, as described in (\ref{Definicion composition meet-relations}), and the identity arrow is given by the dual of the specialization order. It is not hard to see that both the composition and the identity are well-defined, and the composition is associative.

\begin{proposition}\label{3}
    $\langle X,\mathcal{K}, I_{1}, D_{1}, I_{2}, D_{2}\rangle$ 
    is an E-SLata-space if and only if 
    \[
    \langle \mathbf{S}(X), m_{I_{1}}, m_{D_{1}}, m_{I_{2}}, m_{D_{2}} \rangle
    \]
    is an E-SLata.
\end{proposition}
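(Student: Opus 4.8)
The plan is to prove both implications by reducing the new conditions (E2) and (E3) to the algebraic adjointness relations, while leaning on Proposition \ref{SLata-Sp to Slata} to handle condition (E1) automatically. First I would invoke Proposition \ref{SLata-Sp to Slata} directly: by that result, $\langle X, \mathcal{K}, I_1, D_1\rangle$ is an SLata-space if and only if $m_{I_1}\dashv m_{D_1}$ (i.e. $\langle \mathbf{S}(X), m_{I_1}, m_{D_1}\rangle$ is an SLata), and similarly for the pair $(I_2, D_2)$. Thus the equivalence of (E1) with the two adjointness conditions (T1) and (T2) for the algebra $\langle \mathbf{S}(X), m_{I_1}, m_{D_1}, m_{I_2}, m_{D_2}\rangle$ is immediate and requires no further work. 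This isolates the real content: I must show that, granting (E1), condition (E2) is equivalent to the inequality $m_{D_1}(U)\wedge m_{I_2}(V)\leq m_{I_2}(U\cap V)$ and condition (E3) is equivalent to $m_{D_2}(U)\wedge m_{I_1}(V)\leq m_{I_1}(U\cap V)$, for all $U, V\in S(X)$. These are exactly the two clauses of (T3) in Definition \ref{Definicion ELata}, transported along the dictionary $P\leftrightarrow m_{I_1}$, $G\leftrightarrow m_{D_1}$, $F\leftrightarrow m_{I_2}$, $H\leftrightarrow m_{D_2}$.

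The second step is the translation itself, which I would carry out by unpacking the definition $m_R(W)=\{x\in X : R(x)\subseteq L_W\}$. The key observation is that, for $U\in S(X)$, membership $x\in m_{D_1}(U)$ means precisely $D_1(x)\subseteq L_U$, and likewise $x\in m_{I_2}(V)$ means $I_2(x)\subseteq L_V$. Therefore $x\in m_{D_1}(U)\cap m_{I_2}(V)$ says exactly that both hypotheses of (E2) hold at $x$, while $x\in m_{I_2}(U\cap V)$ says $I_2(x)\subseteq L_{U\cap V}$, the conclusion of (E2). Since the order on $\mathbf{S}(X)$ is set inclusion and meet is intersection, the inequality $m_{D_1}(U)\wedge m_{I_2}(V)\leq m_{I_2}(U\cap V)$ is literally the set inclusion $m_{D_1}(U)\cap m_{I_2}(V)\subseteq m_{I_2}(U\cap V)$. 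Reading this inclusion pointwise for every $x\in X$ gives exactly the implication stated in (E2); conversely, quantifying (E2) over all $x$ yields the inclusion. The argument for (E3) versus the inequality $m_{D_2}(U)\wedge m_{I_1}(V)\leq m_{I_1}(U\cap V)$ is identical after swapping the roles of the two SLata structures. I would write this out for (E2) and then remark that (E3) follows mutatis mutandis.

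For the forward direction I would assume $\langle X, \mathcal{K}, I_1, D_1, I_2, D_2\rangle$ is an E-SLata-space; by (E1) and Proposition \ref{SLata-Sp to Slata} the two adjoint pairs (T1), (T2) hold, and by the translation above (E2), (E3) give (T3), so $\langle \mathbf{S}(X), m_{I_1}, m_{D_1}, m_{I_2}, m_{D_2}\rangle$ is an E-SLata. For the converse I would assume the algebra is an E-SLata, obtain (E1) from the two adjunctions via Proposition \ref{SLata-Sp to Slata}, and recover (E2), (E3) by running the same pointwise translation in reverse. One mild subtlety worth flagging: the translation as stated is naturally phrased only for $U, V$ ranging over $S(X)$, which is precisely the range over which (E2) and (E3) are quantified, so no extension to arbitrary opens is needed and the correspondence is exact on both sides.

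The main obstacle is essentially bookkeeping rather than a genuine mathematical difficulty: I must be careful that the inequality in (T3) is oriented correctly relative to the inclusion in (E2)/(E3), since meet in $\mathbf{S}(X)$ is intersection and the order is reversed from the naive reading when one works with complements in $\mathcal{K}$. I would double-check the direction by testing the pointwise reading on a single $x$, confirming that $x\in m_{D_1}(U)\cap m_{I_2}(V)$ forcing $x\in m_{I_2}(U\cap V)$ is what both the inclusion and the E-SLata-space condition assert. Beyond this orientation check, the proof is a direct unwinding of the definitions of $m_R$ and $L_W$, and I would explicitly defer the symmetric verification of (E3) to the reader in the same spirit as the omitted details in Proposition \ref{SLata-Sp to Slata}.
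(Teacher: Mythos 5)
Your proposal is correct and takes essentially the same route as the paper's own proof: you match (E1) with the adjunctions (T1)--(T2) via Proposition \ref{SLata-Sp to Slata}, and then identify (E2) and (E3) with the two clauses of (T3) by pointwise unwinding of $m_R$ and $L_U$, using that meet in $\mathbf{S}(X)$ is intersection and the order is inclusion. This is precisely the paper's argument, which carries out the translation for (E2) and treats (E3) as symmetric.
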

\begin{proof}
    Let us assume that $\langle X,\mathcal{K},I_{1},D_{1},I_{2},D_{2}\rangle$ is an E-SLata-space. By (E1) and Proposition \ref{SLata-Sp to Slata}, we have that $m_{I_1}\dashv m_{D_1}$ and $m_{I_2}\dashv m_{D_2}$, so (T1) and (T2) holds. In order to prove (E3), let us take, $U, V \in S(X)$. We need to show that $m_{D_{1}}(U) \cap m_{I_{2}}(V) \subseteq m_{I_{2}}(U\cap V)$ and $m_{D_{2}}(U) \cap m_{I_{1}}(V) \subseteq m_{I_{1}}(U\cap V)$. To this end, let $x \in m_{I_{1}}(U) \cap m_{I_{2}}(V)$. Then $D_{1}(x)\subseteq L_{U}$ and $I_{2}(x)\subseteq L_{V}$ so, from (E2) it follows that $I_{2}(x)\subseteq L_{U\cap V}$,  thus $x \in m_{I_{2}}(U \cap V)$. The proof of the other inclusion is analogous. On the other hand, suppose that  $\langle \mathbf{S}(X), m_{I_{1}},m_{D_{1}},m_{I_{2}},m_{D_{2}} \rangle$ is an E-SLata, and let $U, V \in S(X)$. Then, from Proposition \ref{SLata to Slata-Sp}, both $\langle X, \mathcal{K},I_1,D_1\rangle$ and $\langle X, \mathcal{K},I_2,D_2\rangle$ are SLata-spaces, thus (E1) holds. Now, let $x\in X$ be such that $D_{1}(x)\subseteq L_{U}$ and $I_{2}(x)\subseteq L_{V}$. Then $x \in m_{D_{1}(U)}$ and $x \in m_{I_{2}}(V)$. Thus, $x \in m_{D_{1}(U)} \cap m_{I_{2}}(V)$. So, from the assumption we get that $x \in m_{I_{2}}(U \cap V)$. Then, $I_{2}(x)\subseteq L_{U \cap V}$, hence (E2) holds. The proof of (E3) is similar. This proves our claim.

\end{proof}

\begin{proposition}
    $ \langle \mathbf{A},P, G, F, H \rangle$ is an E-SLata if and only if $\langle \mathcal{X}(\mathbf{A}),R_P,R_G, R_F, R_H\rangle$ is an E-SLata space.
\end{proposition}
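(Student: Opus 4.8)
The plan is to reduce this equivalence to the corresponding statement about $\mathbf{S}(\mathcal{X}(\mathbf{A}))$, exactly as in Proposition \ref{SLata to Slata-Sp}, by transporting Proposition \ref{3} along the canonical isomorphism $\beta : \mathbf{A} \to \mathbf{S}(\mathcal{X}(\mathbf{A}))$. The guiding observation is that the tuple $\langle \mathcal{X}(\mathbf{A}), R_P, R_G, R_F, R_H\rangle$ is precisely the E-SLata-space datum of Proposition \ref{3} with $I_1 = R_P$, $D_1 = R_G$, $I_2 = R_F$, $D_2 = R_H$, so that $\langle \mathbf{S}(\mathcal{X}(\mathbf{A})), m_{R_P}, m_{R_G}, m_{R_F}, m_{R_H}\rangle$ plays the role of the E-SLata, identifying $P \leftrightarrow m_{R_P}$, $G \leftrightarrow m_{R_G}$, $F \leftrightarrow m_{R_F}$, $H \leftrightarrow m_{R_H}$. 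Reading $x \in m_{D}(U)$ as $D(x)\subseteq L_U$, conditions (E2) and (E3) then unwind precisely to the two inequalities of (T3): $m_{R_G}(U)\cap m_{R_F}(V) \subseteq m_{R_F}(U\cap V)$ is the space-level form of $G(x)\wedge F(y)\le F(x\wedge y)$, and $m_{R_H}(U)\cap m_{R_P}(V)\subseteq m_{R_P}(U\cap V)$ is that of $H(x)\wedge P(y)\le P(x\wedge y)$.

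First I would dispose of the two adjoint pairs. Conditions (T1) and (T2) assert exactly that $\langle \mathbf{A}, P, G\rangle$ and $\langle \mathbf{A}, F, H\rangle$ are SLatas, which by Proposition \ref{SLata to Slata-Sp} is equivalent to $\langle \mathcal{X}(\mathbf{A}), \mathcal{K}_\mathbf{A}, R_P, R_G\rangle$ and $\langle \mathcal{X}(\mathbf{A}), \mathcal{K}_\mathbf{A}, R_F, R_H\rangle$ being SLata-spaces, i.e.\ to condition (E1). Crucially, this step also delivers the monotonicity of all four operators (each is one half of an adjoint pair), which is what licenses the use of Lemmas 7 and 10 of \cite{CMZ} to conclude that $\beta$ intertwines each operator with its associated $m_{R_\bullet}$; that is, $m_{R_P}(\beta(a)) = \beta(P(a))$ and likewise for $G$, $F$, $H$. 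Thus $\beta$ becomes a semilattice isomorphism compatible with the full signature.

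With (E1)/(T1)--(T2) settled, I would handle (T3) by pure transport of structure: since $\beta$ intertwines $P, G, F, H$ with $m_{R_P}, m_{R_G}, m_{R_F}, m_{R_H}$, the algebra $\langle \mathbf{A}, P, G, F, H\rangle$ is an E-SLata if and only if $\langle \mathbf{S}(\mathcal{X}(\mathbf{A})), m_{R_P}, m_{R_G}, m_{R_F}, m_{R_H}\rangle$ is, and the latter is equivalent, by Proposition \ref{3} applied to $X = \mathcal{X}(\mathbf{A})$, to $\langle \mathcal{X}(\mathbf{A}), \mathcal{K}_\mathbf{A}, R_P, R_G, R_F, R_H\rangle$ being an E-SLata space. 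Chaining these equivalences yields the claim; alternatively (T3) $\Leftrightarrow$ (E2)--(E3) can be checked directly using Remark \ref{condicion adjuncion ecuacional} and the two displayed inclusions above. The main obstacle is purely one of bookkeeping in the backward direction: one must extract monotonicity of $P, G, F, H$ from (E1) (via Proposition \ref{SLata to Slata-Sp}) \emph{before} invoking $\beta$ to pull the E-SLata equations back to $\mathbf{A}$, so that the argument is not circular. Once that ordering is respected, no computation beyond Propositions \ref{SLata to Slata-Sp} and \ref{3} together with the preservation properties of $\beta$ is required.
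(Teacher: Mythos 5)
Your proposal is correct and takes essentially the same route as the paper: both treat (T1)--(T2) $\leftrightarrow$ (E1) via Proposition \ref{SLata to Slata-Sp} (which also supplies the monotonicity needed for the dual multirelations $R_P,R_G,R_F,R_H$ to make sense), and both handle (T3) $\leftrightarrow$ (E2)--(E3) by transporting along $\beta$ using the intertwining identities $m_{R_G}(\beta(a))=\beta(G(a))$, etc., from Lemmas 7 and 10 of \cite{CMZ}. The only cosmetic difference is that you invoke Proposition \ref{3} as a black box where the paper inlines the identical unfolding of (E2)/(E3) in terms of the sets $L_{\beta(a)}$.
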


\begin{proof}
On the one hand, suppose that $ \langle \mathbf{A},P, G, F, H \rangle$ is an E-SLata. From Proposition \ref{SLata to Slata-Sp}, we know that both $\langle \mathcal{X}(\mathbf{A}),R_P,R_G\rangle$ and $\langle \mathcal{X}(\mathbf{A}), R_F, R_H\rangle$ are SLata-spaces, so (E1) holds. Now we prove (E2). For this, we need to show that for every $a, b \in A$ and $P \in \mathcal{X}(\mathbf{A})$, if $R_{G}(P) \subseteq L_{\beta(a)}$ and $R_{F}(P) \subseteq L_{\beta(b)}$, then $R_{F}(P) \subseteq L_{\beta(a \wedge b)}$. From (T3) and the fact that $\beta: A \to S(X(A))$ is an isomorphism, we have $\beta (G(a)) \cap \beta(F(b)) \subseteq \beta (F(a \wedge b))$ for every $a, b \in A$. Furthermore, by Lemma 10 of \cite{CMZ}, it follows that $m_{R_{G}}(\beta(a)) \cap m_{R_{F}}(\beta(b)) \subseteq m_{R_{F}}(\beta(a \wedge b))$. Now, let $P \in \mathcal{X}(\mathbf{A})$ and suppose $R_{G}(P) \subseteq L_{\beta(a)}$ and $R_{F}(P) \subseteq L_{\beta(b)}$. By the previous discussion, we immediately obtain $R_{F}(P) \subseteq L_{\beta(a \wedge b)}$. The proof of (E3) is analogous. On the other hand, suppose that $\langle \mathcal{X}(\mathbf{A}),R_P,R_G, R_F, R_H\rangle$ is an E-SLata space. From Proposition \ref{SLata to Slata-Sp}, we know that $P \dashv G$ and $F \dashv H$. As a direct consequence of (E2), we have that for all $a, b \in A$, $m_{R_{G}}(\beta(a)) \cap m_{R_{F}}(\beta(b)) \subseteq m_{R_{F}}(\beta(a \wedge b))$. Then, by Lemma 10 of \cite{CMZ}, it follows that $\beta (G(a) \wedge F(b)) \subseteq \beta (F(a \wedge b))$. Since $\beta$ is a poset isomorphism, we conclude that $G(a) \wedge F(b) \leq F(a \wedge b)$. Similarly, we can prove that $H(a) \wedge P(b) \leq P(a \wedge b)$. Thus, (T3) holds, and we conclude that $ \langle \mathbf{A},G, H, F, P \rangle$ is an E-SLata, as desired.

\end{proof}
\ 

In a similar fashion to what we did in Section \ref{Semilattices with an adjunction}, we can adapt the arguments given immediately after Proposition \ref{unit duality} to construct a proper functor from $\mathsf{ESLata}^{op}$ to $\mathsf{ESLataSp}$, defined as follows:
\[
\begin{array}{rcl}
   \langle\mathbf{A}, P, G, F, H\rangle & \mapsto & \langle \mathcal{X}(\mathbf{A}), \mathcal{K}_\mathbf{A}, R_P, R_G, R_F, R_H \rangle, \\
   h & \mapsto & R_h.
\end{array}
\]

Furthermore, this adaptation can also be extended to define a functor from $\mathsf{ESLataSp}$ to $\mathsf{ESLata}^{op}$ as follows:
\[
\begin{array}{rcl}
    \langle X,\mathcal{K},I_1,D_1,I_2,D_2\rangle & \mapsto & \langle \mathbf{S}(X),m_{I_1},m_{D_1},m_{I_2},m_{D_2}\rangle, \\
     T & \mapsto & \Box_{T}.
\end{array}
\]

The following results are the ESLata versions of Propositions \ref{unit duality} and \ref{counit duality}. Their proofs are analogous to those presented in Section \ref{Semilattices with an adjunction}, so they are omitted.

\begin{proposition}\label{unit dualityESLata}
  Let  $\langle\mathbf{A},i,d\rangle$ be an ESLata, then $\beta: \mathbf{A} \rightarrow \mathbf{S}(\mathcal{X}(\mathbf{A}))$ is an isomorphism of ESLatas.
\end{proposition}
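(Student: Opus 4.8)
The plan is to prove that $\beta : \mathbf{A} \to \mathbf{S}(\mathcal{X}(\mathbf{A}))$ is an isomorphism of E-SLatas by reducing the problem to the SLata case already established in Proposition \ref{unit duality}, and then treating the additional structure one adjoint pair at a time. Concretely, I would first observe that an E-SLata $\langle \mathbf{A}, P, G, F, H \rangle$ consists of two adjoint pairs $P \dashv G$ and $F \dashv H$ which together make $\langle \mathbf{A}, P, G \rangle$ and $\langle \mathbf{A}, F, H \rangle$ into SLatas satisfying the compatibility condition (T3). The target algebra is $\langle \mathbf{S}(\mathcal{X}(\mathbf{A})), m_{R_P}, m_{R_G}, m_{R_F}, m_{R_H} \rangle$, and by the preceding proposition this is indeed an E-SLata whenever $\langle \mathcal{X}(\mathbf{A}), R_P, R_G, R_F, R_H \rangle$ is an E-SLata space.

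The first main step is to note that $\beta$ is already known to be a semilattice isomorphism onto $\mathbf{S}(\mathcal{X}(\mathbf{A}))$ by the general duality (Theorem \ref{Duality S y SSpa}), so $\beta$ is bijective and preserves $\wedge$ and $1$. The second step is to verify that $\beta$ intertwines each of the four tense operators with its dual counterpart, i.e. that
\[
m_{R_P}(\beta(a)) = \beta(P(a)), \quad m_{R_G}(\beta(a)) = \beta(G(a)), \quad m_{R_F}(\beta(a)) = \beta(F(a)), \quad m_{R_H}(\beta(a)) = \beta(H(a))
\]
for all $a \in A$. Each of these four identities follows verbatim from Lemmas 7 and 10 of \cite{CMZ}, exactly as in the proof of Proposition \ref{unit duality}, since each operator is monotone and the relations $R_P, R_G, R_F, R_H$ are defined precisely via the construction $R_f$ recalled before Definition \ref{Slata Space}. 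Thus $\beta$ commutes with all four tense operators.

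Finally, since $\beta$ is a bijective semilattice homomorphism that preserves $P, G, F, H$, it is by definition a morphism of E-SLatas, and its inverse—being the inverse of a semilattice isomorphism that also commutes with the four operators—is likewise an E-SLata morphism; hence $\beta$ is an isomorphism of E-SLatas. I do not expect a genuine obstacle here: the argument is a direct lift of Proposition \ref{unit duality}, and the only point requiring minor care is ensuring that the compatibility axiom (T3) plays no role in the intertwining identities (it does not—(T3) is automatically transported because $\beta$ is a bijection preserving all the relevant operations, and the preceding proposition guarantees the target satisfies (T3)). This is precisely why the authors can legitimately state that the proof is analogous to that of Proposition \ref{unit duality} and omit it.
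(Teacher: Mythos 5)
Your proposal is correct and is precisely the argument the paper intends: the authors omit the proof, stating it is analogous to Proposition \ref{unit duality}, and your write-up is exactly that analogy spelled out---$\beta$ is a semilattice isomorphism by the base duality, it intertwines each of $P,G,F,H$ with $m_{R_P},m_{R_G},m_{R_F},m_{R_H}$ via Lemmas 7 and 10 of \cite{CMZ}, and (T3) is transported automatically. Your remark that the compatibility axiom plays no role in the intertwining identities is a correct and worthwhile clarification of why the omission is legitimate.
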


\begin{proposition}\label{counit dualityESLata}
    Let $\langle X,K,I_1,D_1,I_2,D_2\rangle $ be an ESLata-space. Then $T_{X}\subseteq \mathcal{X}(\mathbf{S}(X)) \times X$ defined by 
    \begin{displaymath}
        \begin{array}{ccc}
            (H_{X}(y), x) \in T_X & \Leftrightarrow  & H_{X}(y)\subseteq H_{X}(x)
        \end{array}
    \end{displaymath}
 is an isomorphism of ESLata-spaces. 
\end{proposition}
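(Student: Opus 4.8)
The plan is to deduce this ESLata version directly from its SLata counterpart, Proposition \ref{counit duality}, by applying the latter separately to each of the two adjoint pairs carried by the ESLata-space, and then observing that both the notion of ESLata-relation and the notion of isomorphism in $\mathsf{ESLataSp}$ are nothing more than the conjunction of the corresponding SLata conditions for the pairs $(I_1,D_1)$ and $(I_2,D_2)$.

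First I would fix the source object. By Proposition \ref{3}, $\langle \mathbf{S}(X), m_{I_1}, m_{D_1}, m_{I_2}, m_{D_2}\rangle$ is an E-SLata, so its dual space $\langle \mathcal{X}(\mathbf{S}(X)), \mathcal{K}_{\mathbf{S}(X)}, R_{m_{I_1}}, R_{m_{D_1}}, R_{m_{I_2}}, R_{m_{D_2}}\rangle$ is a genuine ESLata-space, and this is the domain of $T_X$. Next I would recall, as in \cite{CMZ}, that $H_X : X \to \mathcal{X}(\mathbf{S}(X))$ induces an isomorphism of $mS$-spaces with respect to each of the four monotone operators, and that $T_X$ is precisely the meet-relation induced by the homeomorphism $H_X$ (since the defining condition $H_X(y)\subseteq H_X(x)$ encodes the dual specialization order).

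The central step is then to invoke Proposition \ref{counit duality} twice. Restricting attention to the SLata-space $\langle X, \mathcal{K}, I_1, D_1\rangle$, that proposition tells us that $T_X$ is an isomorphism of SLata-spaces; in particular $T_X$ is a meet-relation whose associated squares commute with $m_{I_1}$ and $m_{D_1}$, and it admits a two-sided inverse $S_X$ under $\ast$ with $T_X \ast S_X$ and $S_X \ast T_X$ equal to the respective dual specialization orders $\sqsupseteq$. Applying the same proposition to $\langle X, \mathcal{K}, I_2, D_2\rangle$ shows that the very same relation $T_X$ also commutes with $m_{I_2}$ and $m_{D_2}$. Since being an ESLata-relation means being an SLata-relation for both indices $l=1,2$ simultaneously, $T_X$ is an ESLata-relation, and so is its inverse $S_X$ by the same double application. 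Because composition and identities in $\mathsf{ESLataSp}$ coincide with those in $\mathsf{SLataSp}$, the equalities $T_X \ast S_X = {\sqsupseteq}$ and $S_X \ast T_X = {\sqsupseteq}$ transfer verbatim, so $T_X$ is an isomorphism in $\mathsf{ESLataSp}$.

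The one point that requires a moment's care — and the step I expect to be the main obstacle — is verifying that the interaction axioms (E2) and (E3), which link the two adjoint pairs and have no analogue in the purely SLata setting, impose no additional constraint on $T_X$. This is resolved by the observation that (E2) and (E3) are conditions on \emph{objects}, not on morphisms: the definition of ESLata-relation requires only the two SLata-relation conditions, with no mixed compatibility demand. Consequently, once $\mathcal{X}(\mathbf{S}(X))$ is recognized as a bona fide ESLata-space and $T_X$ is checked to be an SLata-isomorphism for each pair, nothing further is needed, and the argument concludes exactly as in the SLata case.
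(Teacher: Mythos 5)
Your proof is correct and takes essentially the paper's approach: the paper omits this proof as ``analogous'' to the SLata case, and your argument is exactly that componentwise reduction — establish the domain is an ESLata-space via Proposition~\ref{3} and the dual-space proposition, then apply Proposition~\ref{counit duality} to each pair $(I_l,D_l)$, noting that ESLata-relations and isomorphisms are defined componentwise (with composition, identities, and hence inverses unchanged), while the mixed axioms (E2)--(E3) constrain only objects, not morphisms.
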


Therefore, as consequence of Propositions \ref{unit dualityESLata} and \ref{counit dualityESLata} we obtain:
\begin{theorem}\label{dualidad ESLata}
    The categories $\mathsf{ELata}$ and $\mathsf{EPLata}$ are dually equivalent.
\end{theorem}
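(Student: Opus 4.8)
The plan is to obtain this duality as a formal consequence of the two preceding propositions, in exact parallel with the proof of Theorem \ref{duality SLata}. At the outset I would read $\mathsf{ELata}$ and $\mathsf{EPLata}$ in the statement as the categories $\mathsf{ESLata}$ and $\mathsf{ESLataSp}$ introduced above, together with the two functors
\[
G_E : \mathsf{ESLata}^{op} \to \mathsf{ESLataSp}, \qquad F_E : \mathsf{ESLataSp} \to \mathsf{ESLata}^{op}
\]
determined by the assignments displayed just before Proposition \ref{unit dualityESLata}. The goal is to exhibit natural isomorphisms $\mathrm{Id}_{\mathsf{ESLata}} \cong F_E G_E$ and $G_E F_E \cong \mathrm{Id}_{\mathsf{ESLataSp}}$, which is precisely what it means for $G_E$ and $F_E$ to witness a dual equivalence.

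First I would take the unit from Proposition \ref{unit dualityESLata}: for each ESLata $\langle \mathbf{A}, P, G, F, H\rangle$ the map $\beta_{\mathbf{A}}$ is an isomorphism $\mathbf{A} \cong F_E G_E(\mathbf{A})$ in $\mathsf{ESLata}$. To promote this pointwise isomorphism to a natural one, I would verify that for every ESLata morphism $h$ the expected square relating $\beta$, $h$, and $\Box_{R_h}$ commutes. This naturality is inherited verbatim from the semilattice and monotone dualities (Theorems \ref{Duality S y SSpa} and \ref{duality Msp mMS}), since passing to ESLatas only adds the tense operators, whose compatibility with $\beta$ is exactly the content already packaged into Proposition \ref{unit dualityESLata}.

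Next I would take the counit from Proposition \ref{counit dualityESLata}: for each ESLata-space $\langle X, \mathcal{K}, I_1, D_1, I_2, D_2\rangle$ the relation $T_X$ is an isomorphism $G_E F_E(X) \cong X$ in $\mathsf{ESLataSp}$. Naturality of $T_X$ in $X$ then follows by the same reasoning used in the underlying $S$-space duality, where the homeomorphism $H_X$ already induces a natural isomorphism of $mS$-spaces; the extra multirelations $I_1, D_1, I_2, D_2$ are preserved by construction, so no verification beyond what Proposition \ref{counit dualityESLata} supplies is required.

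With both natural isomorphisms in hand the conclusion is immediate: $G_E$ and $F_E$ form a dual equivalence, so $\mathsf{ESLata}$ and $\mathsf{ESLataSp}$ are dually equivalent. I do not expect a genuine obstacle here. The only delicate point is the naturality of $\beta$ and $T_X$, and this has been deliberately pushed down into the lower-level dualities and into the isomorphism statements of Propositions \ref{unit dualityESLata} and \ref{counit dualityESLata}; once those are granted, the present theorem is essentially a one-line corollary, exactly as Theorem \ref{duality SLata} was in the SLata case.
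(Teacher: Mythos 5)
Your proposal is correct and follows essentially the same route as the paper: the paper likewise deduces the theorem directly from Propositions \ref{unit dualityESLata} and \ref{counit dualityESLata}, using the pair of functors displayed just before them, exactly as Theorem \ref{duality SLata} was deduced from Propositions \ref{unit duality} and \ref{counit duality}. Your reading of $\mathsf{ELata}$ and $\mathsf{EPLata}$ as typographical slips for $\mathsf{ESLata}$ and $\mathsf{ESLataSp}$ is also the intended one, and your explicit attention to naturality of $\beta$ and $T_X$ is, if anything, slightly more careful than the paper's one-line argument.
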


-------------------------------------------------------------------------------------------------------
\\
Belén Gimenez, \\
Departamento de Matem\'atica,\\
Facultad de Ciencias Exactas (UNLP),\\
50 y 115, La Plata (1900)\\
belengim.28@gmail.com

-----------------------------------------------------------------------------------
\\
Gustavo Pelaitay, \\
Facultad de Filosofía Humanidades y Artes,\\
Instituto de Ciencias Básicas (UNSJ),\\
Av.José Ignacio de la Roza, San Juan (5400),\\
and CONICET, Argentina.\\
gpelaitay@gmail.com

-----------------------------------------------------------------------------------------
\\
William Zuluaga,\\
Facultad de Ciencias Exactas (UNCPBA),\\
Pinto 399, Tandil (7000),\\
and CONICET, Argentina,\\
wizubo@gmail.com

\end{document}